\makeatletter\@addtoreset {equation}{section}\makeatother
\theoremstyle{plain}
\newtheorem{theorem}{Theorem}[section]
\newtheorem{lemma}[theorem]{Lemma}
\newtheorem{proposition}[theorem]{Proposition}
\newtheorem{corollary}[theorem]{Corollary}
\theoremstyle{remark}
\newtheorem{remark}[theorem]{Remark}
\newcommand{\R}{\mathbb{R}}
\newcommand{\C}{\mathbb{C}}
\newcommand{\Z}{\mathbb{Z}}
\newcommand{\I}{\mathcal{I}}
\renewcommand{\phi}{\varphi}
\newcommand{\dd}{\,\mathrm{d}}
\newcommand{\half}{{\textstyle\frac12}}
\begin{document}

\title{\bf Orbital stability in the cubic defocusing NLS equation: I. Cnoidal periodic waves}

\author{Thierry Gallay$^{1}$ and Dmitry Pelinovsky$^{2}$ \\
{\small $^{1}$ Institut Fourier, Universit\'e de Grenoble 1,
38402 Saint-Martin-d'H\`eres, France} \\
{\small $^{2}$ Department of Mathematics, McMaster
University, Hamilton, Ontario, Canada, L8S 4K1}  }

\date{\today}
\maketitle

\begin{abstract}
Periodic waves of the one-dimensional cubic defocusing NLS equation
are considered.  Using tools from integrability theory, these waves
have been shown in \cite{Decon} to be linearly stable and the
Floquet--Bloch spectrum of the linearized operator has been explicitly
computed. We combine here the first four conserved quantities of the
NLS equation to give a direct proof that cnoidal periodic
waves are orbitally stable with respect to subharmonic perturbations,
with period equal to an integer multiple of the period of the wave.
Our result is not restricted to the periodic waves of small amplitudes.
\end{abstract}

\section{Introduction}
\label{sec:intro}

We consider the cubic defocusing NLS (nonlinear Schr\"{o}dinger)
equation in one space dimension:
\begin{equation}
\label{nls}
i \psi_t + \psi_{xx} - |\psi|^2 \psi = 0,
\end{equation}
where $\psi = \psi(x,t) \in \C$ and $(x,t) \in \R \times \R$. This
equation arises in the study of modulational stability of small
amplitude nearly harmonic waves in nonlinear dispersive systems
\cite{Sulem}. In this context, monochromatic waves of the original
system correspond to spatially homogeneous solutions of the cubic NLS
equation (\ref{nls}) of the form $\psi(x,t) = a e^{-i a^2 t}$, where
the positive parameter $a$ can be taken equal to one without loss of
generality, due to scaling invariance. According to the famous
Lighthill criterion, these plane waves are spectrally stable with
respect to sideband perturbations \cite{Ostrovsky}, because the
nonlinearity in (\ref{nls}) is defocusing. Moreover, using energy
methods, it can be shown that plane waves are also orbitally stable
under perturbations in $H^1(\R)$ \cite[Section~3.3]{Zhidkov}, where
the orbit is defined with respect to arbitrary rotations of the
complex phase of $\psi$.

More generally, it is important for the applications to consider
spatially inhomogeneous waves of the form $\psi(x,t) = u_0(x)
e^{-i t}$, where the profile $u_0 : \R \to \C$ satisfies the
second-order differential equation
\begin{equation}
\label{wave}
\frac{d^2 u_0}{d x^2} + (1 - |u_0|^2) u_0 = 0, \quad x \in \R.
\end{equation}
Such solutions of the cubic NLS equation (\ref{nls}) correspond to
slowly modulated wave trains of the original physical system.  A
complete list of all bounded solutions of the second-order equation
(\ref{wave}) is known, see \cite{Decon,GH2}. Most of them are
quasi-periodic in the sense that $u_0(x) = r(x)e^{i\phi(x)}$ for some
real-valued functions $r,\phi$ such that $r$ and $\phi'$ are periodic
with the same period $T_0 > 0$. The corresponding solutions of the
cubic NLS equation (\ref{nls}) are usually called ``periodic waves'',
although strictly speaking they are not periodic functions of $x$ in
general. In addition, the second-order equation (\ref{wave}) has
nonperiodic solutions such that $r$ and $\phi'$ converge to a limit as
$x \to \pm\infty$; these correspond to ``dark solitons'' of the cubic
NLS equation. In the present paper, we focus on real-valued solutions
of the second-order equation (\ref{wave}), which form a one-parameter
family of periodic waves (often referred to as ``cnoidal waves'').

Several recent works addressed the stability of periodic waves for
the cubic NLS equation (\ref{nls}). Using the energy method, it
was shown in \cite{GH1,GH2} that periodic waves are orbitally
stable within a class of solutions which have the same periodicity
properties as the wave itself. More precisely, if $u_0(x) =
e^{ipx} q_0(x)$ where $p \in \R$ and $q_0$ is $T_0$-periodic, the
wave $u_0(x)e^{-it}$ is orbitally stable among solutions of the form
$\psi(x,t) = e^{i(px-t)} q(x,t)$, where $q(\cdot,t) \in H^1_{\rm per}(0,T_0)$.
Here the orbit is defined with respect to translations in space
and rotations of the complex phase. The proof follows the general
strategy proposed in \cite{GSS} and relies on the fact that the
periodic wave is a constrained minimizer of the energy
\begin{equation}
\label{energy}
E(\psi) = \int_\I \Bigl[ |\psi_x|^2 + \frac{1}{2} (1 - |\psi|^2)^2
\Bigr] \dd x,
\end{equation}
subject to fixed values of the charge $Q$ and
the momentum $M$ given by
\begin{equation}
\label{charge}
Q(\psi) = \int_\I |\psi|^2 \dd x, \quad M(\psi) =
\frac{i}{2} \int_\I \Bigl(\bar{\psi} \psi_x - \psi \bar{\psi}_x
\Bigr) \dd x.
\end{equation}
Here $\I = (0,T_0)$. On the other hand, if we consider the more
general case of ``subharmonic perturbations'', which correspond to
$q(\cdot,t) \in H^1_{\rm per}(0,NT_0)$ for some integer $N \ge 2$,
then the second variation of $E$ at $u_0$ with $\I = (0,N T_0)$
contains additional negative eigenvalues, which cannot be eliminated
by restricting the energy to the submanifold where $Q$ and $M$ are
constant.

Generally speaking, in such an unfavorable energy configuration, there
is no chance to establish orbital stability using the standard energy
method \cite{Pava}. However, the cubic defocusing NLS equation can (at
least formally) be integrated using the inverse scattering transform
method, and it admits therefore a countable sequence of independent
conserved quantities. For instance, one can verify directly or with an
algorithmic computation (see \cite[Section 2.3]{Yang} for a review of
such techniques) that the higher-order functional
\begin{equation}
\label{energy-R}
R(\psi) = \int_\I \Bigl[ |\psi_{xx}|^2 + 3 |\psi|^2 |\psi_x|^2 +
\frac{1}{2} (\bar{\psi} \psi_x + \psi \bar{\psi}_x)^2 + \frac{1}{2} |\psi|^6 \Bigr] \dd x,
\end{equation}
is also invariant under the time evolution defined by (\ref{nls}).
These additional properties can be invoked to rescue the stability
analysis of periodic waves. Indeed, using the eigenfunctions of Lax
operators arising in the inverse scattering method, a complete set of
Floquet--Bloch eigenfunctions satisfying the linearization of the
cubic NLS equation (\ref{nls}) at the periodic wave with profile $u_0$
has been constructed in \cite{Decon}.  Moreover, it is shown in
\cite{Decon} that an appropriate linear combination of the energy $E$,
the charge $Q$, the momentum $M$, and the higher order quantity $R$
produces a functional for which the periodic wave with profile $u_0$
is a strict local minimizer, up to symmetries. This result holds for
$q(\cdot,t) \in H^2_{\rm per}(0,N T_0)$, for any $N \in \mathbb{N}$,
where $T_0$ is the period of $|u_0|$.  This easily implies that the
periodic wave with profile $u_0$ is orbitally stable with respect to
subharmonic perturbations.

The proof given in \cite{Decon} that any periodic wave
can be characterized as a local minimizer of a suitable higher-order
conserved quantity is not direct. Indeed, the authors
prove the positivity of the second variation at the periodic wave by
evaluating the corresponding quadratic form on the basis of the
Floquet--Bloch eigenfunctions associated with the linearized NLS
flow. These, however, are {\em not} the eigenfunctions of the
self-adjoint operator associated with the second variation itself,
which would be more natural to use in the present context. In
addition, many explicit computations are not transparent because they
rely on nontrivial properties of the Jacobi elliptic functions and
integrals that are used to represent the profile $u_0$ of the periodic
wave. This is why we feel that it is worth revisiting the problem
using more standard PDE techniques, which is the goal of the
present work.

The idea of using higher-order conserved quantities to solve delicate
analytical problems related to orbital stability of nonlinear waves in
integrable evolution equations has become increasingly popular in
recent years. Orbital stability of $n$-solitons in the Korteweg--de
Vries (KdV) and the cubic focusing NLS equations was established in
the space $H^n(\R)$ by combining the first $(n+1)$ conserved
quantities of these equations in \cite{MS93} and \cite{Kap06},
respectively.  For the modified KdV equation, orbital stability of
breathers in the space $H^2(\R)$ was established in \cite{Munoz} by
using two conserved quantities. For the massive Thirring model (a
system of nonlinear Dirac equations), orbital stability of solitary
waves was proved in the space $H^1(\R)$ with the help of the first
four conserved quantities \cite{PY}.

As already mentioned, we consider in this paper periodic waves of the
cubic defocusing NLS equation (\ref{nls}) which correspond to {\em
 real-valued} solutions of the second-order equation (\ref{wave}). In
that case, the second-order equation (\ref{wave}) can be integrated
once to obtain the first-order equation
\begin{equation}
\label{first-order}
\left( \frac{d u_0}{d x} \right)^2 =\, \frac{1}{2} \left[ (1 - u_0^2)^2
- \mathcal{E}^2 \right], \quad x \in \R,
\end{equation}
where the integration constant $\mathcal{E} \in [0,1]$ can be used to
parameterize all bounded solutions, up to translations. If $0 <
\mathcal{E} < 1$, we obtain a periodic solution which has the explicit
form
\begin{equation}
\label{sn-periodic}
u_0(x) = \sqrt{1-\mathcal{E}} ~\mathrm{sn}\biggl(x\sqrt{\frac{1+
\mathcal{E}}{2}}\,,\,\sqrt{\frac{1-\mathcal{E}}{1+\mathcal{E}}}
\biggr),
\end{equation}
where $\mathrm{sn}(\xi,k)$ denotes the Jacobi elliptic function with
argument $\xi$ and parameter $k$ \cite{Lawden}. This solution
corresponds to a closed orbit in the phase plane for $(u_0,u_0')$,
which is represented in Figure~\ref{fig1}. When $\mathcal{E} \to 1$
the orbit shrinks to the center point $(0,0)$, while in the limit
$\mathcal{E} \to 0$ the solution $u_0$ approaches the black soliton
\begin{equation}
\label{black-soliton}
u_0(x) = \tanh\left( \frac{x}{\sqrt{2}} \right),
\end{equation}
which corresponds to a heteroclinic orbit connecting the two saddle
points $(-1,0)$ and $(1,0)$. If $\mathcal{E} \in (0,1)$, the period
of $u_0$ (which is exactly twice the period $T_0$ of the modulus $|u_0|$)
is given by
\begin{equation}
\label{K-periodic}
2 T_0 = 4\sqrt{\frac{2}{1+\mathcal{E}}}\,K\biggl(\sqrt{\frac{1-
\mathcal{E}}{1+\mathcal{E}}}\biggr),
\end{equation}
where $K(k)$ is the complete elliptic integral of the first kind.
It can be verified that $T_0$ is a decreasing function of $\mathcal{E}$
which satisfies $T_0 \to +\infty$ as $\mathcal{E} \to 0$ and $T_0 \to
\pi$ as $\mathcal{E} \to 1$ \cite{GH2}.

\begin{figure}[h]
\begin{center}
\includegraphics[scale=0.4]{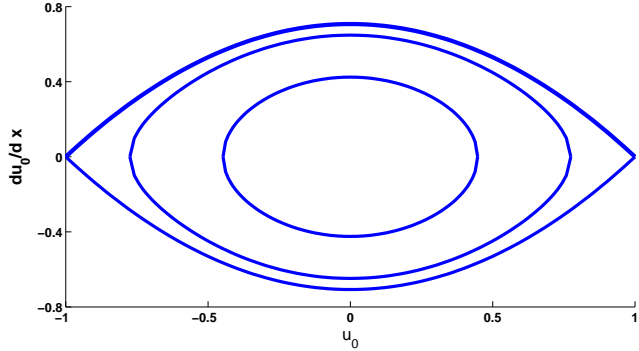}
\end{center}
\caption{\label{fig1}
The level set given by (\ref{first-order}) on the phase plane $(u_0,u_0')$
for $\mathcal{E} = 0,\, 0.4,\, 0.8$.}
\end{figure}

Now we study the stability of the periodic wave $\psi(x,t) =
u_0(x) e^{-it}$, where $u_0$ is given by (\ref{sn-periodic}) for
some $\mathcal{E} \in (0,1)$. It is clear from (\ref{wave}) that
the wave profile $u_0$ is a critical point of the energy functional
$E$ defined by (\ref{energy}). In addition, one can verify by
explicit (but rather cumbersome) calculations that $u_0$ is
also a critical point of the higher-order functional
\begin{equation}
\label{energy-S}
S(\psi) = R(\psi) - \frac{1}{2} (3 - \mathcal{E}^2) Q(\psi),
\end{equation}
where $R$ is given by (\ref{energy-R}) and $Q$ by (\ref{charge}).
Using an idea borrowed from \cite{Decon}, we combine $E$ and $S$
by introducing the functional
\begin{equation}
\label{Lyapunov-functional}
\Lambda_c(\psi) = S(\psi) - c E(\psi),
\end{equation}
where $c \in \R$ is a parameter that will be fixed below. Our
first result is the following proposition, which establishes an
{\em unconstrained} variational characterization for the periodic
waves of the NLS equation (\ref{nls}), at least when their amplitude
is small enough.

\begin{proposition}
\label{prop-per}
There exists $\mathcal{E}_0 \in (0,1)$ such that, for all $\mathcal{E}
\in (\mathcal{E}_0,1)$, there exist values $c_-$ and $c_+$ in the
range $1 < c_- < 2 < c_+ < 3$ such that, for any $c \in (c_-, c_+)$,
the second variation of the functional $\Lambda_c$ at the periodic
wave profile $u_0$ is nonnegative for perturbations in $H^2(\R)$.
Furthermore, we have
\begin{equation}
\label{asymptotics-c-plus-minus}
c_{\pm} = 2 \pm \sqrt{2 (1 - \mathcal{E})} + \mathcal{O}(1 -
\mathcal{E}) \quad
\mbox{\rm as} \quad \mathcal{E} \to 1.
\end{equation}
\end{proposition}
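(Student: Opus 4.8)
The plan is to reduce the nonnegativity of the second variation to two scalar, fourth‑order, self‑adjoint operators and then to analyze these by perturbation theory in the small‑amplitude regime $\mathcal{E}\to1$. First I would write $\psi = u_0 + v_1 + i v_2$ with $v_1,v_2$ real. Since $u_0$ is real and the functionals $E,Q,R$ (hence $S$ and $\Lambda_c$) are invariant under complex conjugation $\psi\mapsto\bar\psi$, the Hessian of $\Lambda_c$ at $u_0$ is even in $v_2$ and so carries no $v_1v_2$ cross terms; the second variation splits as $\langle\mathcal{L}_+ v_1,v_1\rangle+\langle\mathcal{L}_- v_2,v_2\rangle$, where $\mathcal{L}_\pm = S_\pm - c L_\pm$, with $L_+=-\partial_x^2+3u_0^2-1$ and $L_-=-\partial_x^2+u_0^2-1$ the Hessians of $E$ and $S_\pm$ the fourth‑order Hessians of $S$. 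Differentiating (\ref{wave}) gives $L_+u_0'=0$ and $L_-u_0=0$, while translation and phase invariance of $S$ give $S_+u_0'=0$ and $S_-u_0=0$; hence $\mathcal{L}_+u_0'=0$ and $\mathcal{L}_-u_0=0$ for every $c$. The problem becomes: show $\mathcal{L}_+\ge0$ and $\mathcal{L}_-\ge0$ on $H^2(\R)$.

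The second step is an explicit computation of the two quadratic forms. Expanding $R$ to second order and eliminating $u_0''$ through (\ref{wave}) and $(u_0')^2$ through (\ref{first-order}), I expect the imaginary part to collapse to the remarkably simple expression
\[
\langle\mathcal{L}_- v, v\rangle = \int_\R\Bigl[v_{xx}^2 + (3u_0^2 - c)\,v_x^2 + (1-c)(u_0^2-1)\,v^2\Bigr]\dd x,
\]
with an analogous but bulkier formula for $\mathcal{L}_+$ (same leading term $v_{xx}^2$, coefficient $5u_0^2-c$ on $v_x^2$, and a quartic polynomial in $u_0$ on $v^2$). Carrying out this reduction and checking the cancellations is the main bookkeeping effort.

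The heart of the proof is the spectral analysis, which I would run in the regime $a^2:=1-\mathcal{E}\to0$, where (\ref{sn-periodic}) gives $u_0(x)=a\sin x+\mathcal{O}(a^3)$ and the period tends to $\pi$. Setting $c=2+\gamma$, at $a=0,\ \gamma=0$ both operators reduce to $(\partial_x^2+1)^2$, a nonnegative operator whose Bloch kernel is spanned by $e^{\pm i x}$ (its symbol $(\xi^2-1)^2$ has a double zero at $\xi=\pm1$). Away from the quasimomentum of these modes the fiber operators are uniformly positive, so for $a$ and $|\gamma|$ small the only possible loss of positivity comes from the two‑dimensional near‑kernel. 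Degenerate Floquet perturbation theory there yields a $2\times2$ Hermitian matrix with diagonal entries $4\delta^2\mp2\gamma\delta+a^2+o(\cdot)$ ($\delta$ the Bloch offset) and off‑diagonal entry of modulus $a^2+o(a^2)$; crucially this matrix is, to leading order, the same in both the $+$ and $-$ sectors (up to the immaterial sign of the off‑diagonal). Its lowest eigenvalue is $4\delta^2+a^2-\sqrt{4\gamma^2\delta^2+a^4}+o(\cdot)$, which is nonnegative for all $\delta$ precisely when $\gamma^2\le 2a^2$, i.e. $|c-2|\le\sqrt{2(1-\mathcal{E})}$. This gives both the admissible interval $(c_-,c_+)$ with $1<c_-<2<c_+<3$ for $\mathcal{E}$ near $1$ and the asymptotics $c_\pm=2\pm\sqrt{2(1-\mathcal{E})}+\mathcal{O}(1-\mathcal{E})$.

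The delicate point, and the step I expect to be the main obstacle, is making the reduction to the $2\times2$ matrix rigorous and uniform: one must justify the Lyapunov--Schmidt (Schur‑complement) elimination of the non‑resonant Bloch modes and of the higher harmonics generated by $u_0^2$, bound their contributions by $\mathcal{O}(a^2)$ uniformly in the quasimomentum, and verify that they move the threshold only at order $\mathcal{O}(1-\mathcal{E})$, which accounts for the stated error term. A secondary subtlety is to confirm that the $+$ sector, whose potential coefficients are larger, does not impose a strictly stronger constraint than the $-$ sector; the computation above shows both thresholds agree at leading order, but the subleading corrections must be compared to guarantee that the common interval $(c_-,c_+)$ is nonempty.
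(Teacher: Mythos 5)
Your proposal is correct and follows essentially the same route as the paper: the same block-diagonal splitting into two fourth-order operators $K_\pm(c)=M_\pm-cL_\pm$ with the same quadratic forms, the same small-amplitude regime $a^2\approx 1-\mathcal{E}$ with $u_0\approx a\sin$, the same Floquet--Bloch reduction via Lyapunov--Schmidt to a $2\times2$ matrix on the resonant modes $e^{\pm iz}$, and the same threshold $\gamma^2\le 2a^2$ giving $c_\pm = 2\pm\sqrt{2(1-\mathcal{E})}+\mathcal{O}(1-\mathcal{E})$. When executing it, note the two details the paper supplies: the period must first be normalized exactly to $2\pi$ (via a Lyapunov--Schmidt expansion of the wave profile itself, since $u_0(x)=a\sin(\ell x)+\dots$ with $\ell\neq1$ is not uniformly close to $a\sin x$ on $\R$), and the sign-ambiguous $\kappa$-independent error term in the lowest reduced eigenvalue is eliminated not by estimation but by the exact zero modes $K_-(c)u_0=0$ and $K_+(c)u_0'=0$, which force the bottom band to vanish exactly at $\kappa=0$.
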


\begin{remark}
\label{rem-per1}
The second variation of $\Lambda_c$ at $u_0$ is the quadratic form
associated with a fourth-order selfadjoint operator with
$T_0$-periodic coefficients, which will be explicitly calculated in
Section~\ref{sec:small} below. Proposition~\ref{prop-per} asserts that
the Floquet--Bloch spectrum of that operator is nonnegative, if we
consider it as acting on the whole space $H^4(\R)$. In particular, the
same operator has nonnegative spectrum when acting on $H^4_{\rm
  per}(0,T)$, where $T$ is any multiple of $T_0$.  In fact, the proof
of Proposition~\ref{prop-per} shows that $\Lambda_c''(u_0)$ is
positive except for two neutral directions corresponding to symmetries
(translations in space and rotations of the complex phase). This key
observation will allow us to prove orbital stability of the periodic wave with respect to
subharmonic perturbations, see Theorem \ref{theorem-per} below.
\end{remark}

Our second result suggests a rather explicit formula for the
limiting values $c_{\pm}$ that appear in Proposition \ref{prop-per}.

\begin{proposition}
\label{prop-explicit}
For all $\mathcal{E} \in (0,1)$ and all $c \ge 1$, the second variation
of the functional $\Lambda_c$ at the periodic wave profile $u_0$ is
positive, except for two neutral directions due to symmetries, only
if $c \in [c_-,c_+]$ with
\begin{equation}
\label{exact-c-plus-minus}
c_{\pm} = 2 \pm \frac{2 k}{1 + k^2}, \qquad
\hbox{where} \quad k = \sqrt{\frac{1-\mathcal{E}}{1+\mathcal{E}}}.
\end{equation}
\end{proposition}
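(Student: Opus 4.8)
The plan is to reduce the quadratic form $\Lambda_c''(u_0)$ to a pair of scalar fourth-order operators and to read off the endpoints $c_\pm$ from their Floquet--Bloch spectrum. Since $u_0$ is real and the functionals $E,Q,R$ depend on $\psi$ only through the real combinations $|\psi|^2$, $|\psi_x|^2$, $|\psi_{xx}|^2$ and $\partial_x|\psi|^2$, the second variation diagonalizes with respect to the splitting $\psi=u_0+v_1+iv_2$ as $\Lambda_c''(u_0)=L_+^c\oplus L_-^c$, where $L_+^c$ acts on the real part $v_1$ and $L_-^c$ on the imaginary part $v_2$. Both are self-adjoint fourth-order operators with $T_0$-periodic coefficients; a direct computation (carried out in Section~\ref{sec:small}) gives, for instance, $L_-^c=\partial_x^4-3\partial_x(u_0^2\partial_x)+c\,\partial_x^2+(1-c)u_0^2+(c-1)$, with an analogous expression for $L_+^c$. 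The assertion that $\Lambda_c''(u_0)$ is positive apart from the two symmetry directions then means precisely that $L_-^c\ge 0$ with $\ker L_-^c=\mathrm{span}(u_0)$ (phase rotation) and $L_+^c\ge 0$ with $\ker L_+^c=\mathrm{span}(u_0')$ (translation); both operators annihilate the corresponding symmetry mode for every $c$, as a consequence of the phase and translation invariance of $\Lambda_c$.

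Next I would write $L_\pm^c=L_\pm^S-c\,L_\pm^E$, where $E''(u_0)=L_+^E\oplus L_-^E$ with $L_-^E=-\partial_x^2+u_0^2-1$ and $L_+^E=-\partial_x^2+3u_0^2-1$. Using the explicit profile \eqref{sn-periodic}, these second-order operators are rescaled translates of the Lamé operators of degree $n=1$ and $n=2$, whose Floquet--Bloch spectra are known in closed form: each consists of finitely many bands whose edges are attained by elementary Jacobi functions, each $L_\pm^E$ has exactly one band of negative spectrum, and the neutral modes $u_0,u_0'$ sit at the band edge $\lambda=0$. For a fixed test function $w$ the map $c\mapsto\langle w,L_\pm^c w\rangle=\langle w,L_\pm^S w\rangle-c\,\langle w,L_\pm^E w\rangle$ is affine, so $\{c:L_+^c\ge0\text{ and }L_-^c\ge0\}$ is an intersection of half-lines, hence a closed interval $[c_-,c_+]$ containing $c=2$; moreover the sign of $\langle w,L_\pm^E w\rangle$ indicates that the upper endpoint $c_+$ is forced by directions lying over the positive spectrum of $L_\pm^E$ and the lower endpoint $c_-$ by directions over its negative band. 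The whole problem is thus to compute these two endpoints.

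To find $c_\pm$ I would look for the marginal direction: at $c=c_\pm$ one of $L_\pm^c$ must have a second kernel element $w$ orthogonal to the symmetry mode, necessarily a periodic or antiperiodic Floquet--Bloch eigenfunction, i.e. $c$ is a generalized eigenvalue of the pencil $L_\pm^S w=c\,L_\pm^E w$. I would seek $w$ in closed form among products of Jacobi elliptic functions of the appropriate parity and (anti)periodicity class, substitute into $L_\pm^S w=c\,L_\pm^E w$, and reduce the solvability condition to an algebraic equation for $c$; the target is the single relation $(c-2)^2=4k^2/(1+k^2)^2$, equivalent to \eqref{exact-c-plus-minus}. The rational form of $c_\pm$ in $k$ suggests that the marginal mode is naturally expressed through a transformed elliptic modulus, which would guide the ansatz. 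As a consistency check, in the limit $\mathcal{E}\to1$ (that is $k\to0$) the relation collapses to $c=2$, matching the leading order of \eqref{asymptotics-c-plus-minus} and the conclusion of Proposition~\ref{prop-per}.

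The step I expect to be the main obstacle is precisely the explicit determination of this marginal eigenfunction. The fourth-order operators $L_\pm^c$ do \emph{not} commute with the Lamé operators $L_\pm^E$ and are not polynomials in them; consequently their band-edge eigenfunctions are not the elementary Jacobi functions $\mathrm{sn},\mathrm{cn},\mathrm{dn}$ (one verifies, for example, that $\mathrm{dn}$ and $\mathrm{cn}$ are eigenfunctions of $L_-^E$ but not of $L_-^c$, since $L_-^c\,\mathrm{dn}$ acquires an extra term proportional to $u_0^2\,\mathrm{dn}$). The naive picture in which the spectrum of $L_\pm^c$ is the image of the known Lamé spectrum under a quadratic map therefore fails: the non-commuting fourth-order terms genuinely lift the band of $L_\pm^c$ adjacent to the neutral mode, and it is exactly this lift that opens a nondegenerate interval $[c_-,c_+]$ instead of the single point $\{2\}$ obtained in the constant-coefficient limit $\mathcal{E}\to1$. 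Capturing this lift, constructing the correct marginal mode, and verifying that the \emph{first} loss of positivity as $c$ leaves a neighbourhood of $2$ occurs at $c_\pm$ (and not at some other band edge) is where the real work lies, requiring careful use of Jacobi elliptic identities together with the exact Lamé band structure. Only the necessity (``only if'') is claimed here; sufficiency near $\mathcal{E}=1$ is the content of Proposition~\ref{prop-per}.
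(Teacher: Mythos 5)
Your setup is correct and matches the paper as far as it goes: the block diagonalization of $\Lambda_c''(u_0)$ into $K_\pm(c)=M_\pm-cL_\pm$, the identification of the neutral modes $u_0'$ and $u_0$, and the observation that the admissible set of $c$ is an interval. But the core of your plan rests on a mechanism that is not how positivity is actually lost, and the step you yourself defer as ``the main obstacle'' is the entire content of the proof. You posit that at $c=c_\pm$ one of $K_\pm(c)$ acquires a second kernel element in the periodic or antiperiodic Floquet class, so that $c_\pm$ could be found as a generalized eigenvalue of the pencil $M_-w=c\,L_-w$ on such functions. This is false: $K_-(c)u_0=0$ for \emph{every} $c$, so the Floquet--Bloch band $\mu(\kappa)$ of $K_-(c)$ is pinned to zero at $\kappa=0$ for all $c$, and what changes at $c=c_\pm$ is the \emph{curvature} $\mu''(0)$ of that band, not the kernel. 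For $c$ just outside $[c_-,c_+]$ the band dips negative for small $\kappa\neq 0$, i.e.\ the zero crossing occurs at a $c$-dependent, generically irrational Floquet exponent $\kappa^*(c)\to 0$; no new periodic or antiperiodic bounded solution appears at $c=c_\pm$ itself. Consequently an ansatz search among Jacobi products in the two (anti)periodic classes cannot produce the relation $(c-2)^2=4k^2/(1+k^2)^2$, and your plan has no route to the endpoints.

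What the paper does instead is second-order (in $\kappa$) perturbation theory around the mode you already have. Assuming $\ker P_-(c,0)={\rm span}\{U\}$, Proposition~\ref{proposition-lowest-band} expands the pinned band as $\mu(\kappa)=-\mu_2\kappa^2+\mathcal{O}(\kappa^3)$ and expresses $\mu''(0)$ through $\|U\|_{L^2_{\rm per}}^2$, $\|U'\|_{L^2_{\rm per}}^2$ and the resolvent term $\langle U',(P_-(c,0))^{-1}U'\rangle_{L^2_{\rm per}}$, see \eqref{expansion-lowest-band}. The resolvent term is computable in closed form because the generalized null mode $V$ of \eqref{function-V} satisfies both $\mathcal{L}_-V\propto\jmath'$ and $\mathcal{M}_-V\propto\jmath'$ --- a structural consequence of the commuting hierarchy flows (Remark~\ref{remark-appendix}) --- so that $(\mathcal{M}_--c(1+k^2)\mathcal{L}_-)^{-1}\jmath'$ is an explicit multiple of $V$ by \eqref{M0L0ident}. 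This yields the rational expression \eqref{expression-mu}, whose denominator is positive precisely when $c\ge 1$ (this is where the hypothesis $c\ge1$ enters, a point your proposal does not use) and whose numerator $4k^2-(c-2)^2(1+k^2)^2$ changes sign exactly at $c_\pm$, giving necessity. Finally, the paper covers the dichotomy you omit: if $\ker P_-(c,0)$ is higher-dimensional, the conclusion holds trivially because there are then extra neutral directions beyond the two symmetries. So the repair of your argument is not a cleverer closed-form marginal mode, but degenerate band-curvature perturbation theory at $\kappa=0$ combined with the explicit inversion of $K_-(c)$ on $u_0'$ via the generalized null mode.
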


\begin{remark}
\label{rem-per2}
Proposition \ref{prop-explicit} gives a necessary condition for the
second variation $\Lambda_c''(u_0)$ to be positive except for two
neutral directions due to translations and phase rotations. The
condition is obtained by considering one particular band of the
Floquet--Bloch spectrum of the fourth-order operator associated with
$\Lambda_c''(u_0)$. That band touches the origin when the
Floquet-Bloch wave number is equal to zero, is strictly convex near
the origin if $c \in (c_-,c_+)$, and strictly concave if $c \ge 1$
and $c \notin [c_-,c_+]$. In the latter case, the second variation
$\Lambda_c''(u_0)$ has therefore negative directions. Interestingly
enough, the alternative approach of Bottman {\em et al.} \cite{Decon}
suggests that, for any $\mathcal{E} \in (0,1)$, the second variation
$\Lambda_c''(u_0)$ is positive (except for neutral directions due to
symmetries) whenever $c \in (c_-,c_+)$. Indeed, after adopting our
definition of the functionals $E$ and $S$, and performing explicit
computations with Jacobi elliptic functions, one can show that
the conditions implicitly defined in \cite[Theorem 7]{Decon}
exactly correspond to choosing our parameter $c$ in the interval
$(c_-,c_+)$ given by (\ref{exact-c-plus-minus}).
\end{remark}

In Figure~\ref{fig3}, the values $c_{\pm}$ are represented as a
function of the parameter $\mathcal{E}$ by a solid line. Note that
the asymptotic expansion (\ref{asymptotics-c-plus-minus}) is recovered
from the analytical expressions (\ref{exact-c-plus-minus}) in the
limit $k \to 0$, that is, $\mathcal{E} \to 1$.  The asymptotic result
(\ref{asymptotics-c-plus-minus}) is shown by dashed lines.

\begin{figure}[h]
\begin{center}
\includegraphics[scale=0.4]{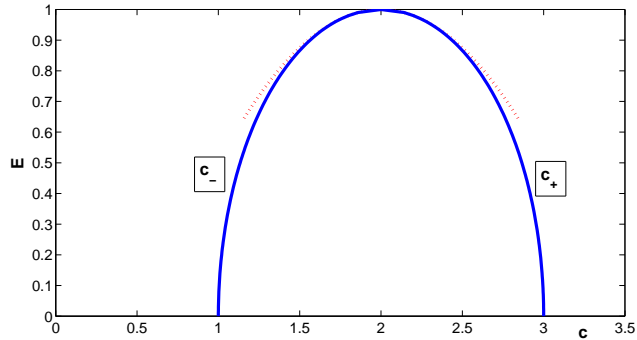}
\end{center}
\caption{\label{fig3}
The values $c_{\pm}$ given by the explicit expressions
(\ref{exact-c-plus-minus}) are represented as a function of the
parameter $\mathcal{E}$ (solid line). The asymptotic result
(\ref{asymptotics-c-plus-minus}) is shown by dashed lines.}
\end{figure}

The result of Proposition \ref{prop-per} relies on perturbation theory and
is therefore restricted to periodic waves of small amplitude. Although
the analytic formula (\ref{exact-c-plus-minus}) suggests that the conclusion of Proposition
\ref{prop-per} should hold for all periodic waves, namely for all
$\mathcal{E} \in (0,1)$, the result of Proposition \ref{prop-explicit}
is only a necessary condition for positivity of the functional $\Lambda_c$.
In the next result, we fix $c = 2$ (the mean value in the interval $[c_-,c_+]$)
and prove the positivity of the second variation of the functional $\Lambda_{c=2}$.

\begin{proposition}
\label{prop-positivity}
Fix $c = 2$. For every $\mathcal{E} \in (0,1)$, the second variation
of the functional $\Lambda_{c=2}$ at the periodic wave profile $u_0$
is positive, except for two neutral directions due to symmetries.
\end{proposition}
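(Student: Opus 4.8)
The plan is to reduce the proposition to the nonnegativity of two scalar fourth-order operators and to treat each by a sum-of-squares argument. I would write the perturbation as $\psi = u_0 + v + iw$ with $v,w$ real-valued. Because $u_0$ is real and the functionals $E,Q,R$ are invariant under the phase rotation $\psi\mapsto e^{i\theta}\psi$ and the translation $\psi(\cdot)\mapsto\psi(\cdot+s)$, the quadratic form associated with $\Lambda_{c=2}''(u_0)$ decouples as $\langle v,\mathcal{M}_+ v\rangle + \langle w,\mathcal{M}_- w\rangle$, where, after repeated integration by parts and systematic use of the first-order equation \eqref{first-order}, the two pieces are the selfadjoint fourth-order operators with $T_0$-periodic coefficients
\[
\mathcal{M}_- = \partial_x^4 - \partial_x\bigl((3u_0^2-2)\partial_x\bigr) + (1-u_0^2), \qquad
\mathcal{M}_+ = \partial_x^4 - \partial_x\bigl((5u_0^2-2)\partial_x\bigr) + \bigl(3\mathcal{E}^2 - 5u_0^4 + 9u_0^2 - 2\bigr).
\]
The two neutral directions are accounted for by $\mathcal{M}_- u_0 = 0$ (phase rotation) and $\mathcal{M}_+ u_0' = 0$ (translation); I would verify these directly, again invoking \eqref{first-order} to absorb the cubic term $(u_0')^3$ that otherwise survives. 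The proposition thus amounts to showing that $\mathcal{M}_\pm\ge0$ with kernels spanned exactly by $u_0$ and $u_0'$ respectively, so that on each space $H^2_{\rm per}(0,NT_0)$ the kernel is genuinely the two-dimensional symmetry subspace.

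For the imaginary part I would introduce the Schr\"odinger operators $L_- = -\partial_x^2 + u_0^2 - 1$ and $L_+ = -\partial_x^2 + 3u_0^2 - 1$, satisfying $L_- u_0 = 0$ and $L_+ u_0' = 0$, together with the first-order operator $\mathcal{A} = u_0\partial_x - u_0' = u_0^2\,\partial_x\,u_0^{-1}$, which annihilates exactly the multiples of $u_0$. The key step is the factorization
\[
\mathcal{M}_- = L_-^2 + \mathcal{A}^*\mathcal{A}.
\]
I would check this by expanding both sides: the difference $\mathcal{M}_- - L_-^2$ equals $-\partial_x(u_0^2\partial_x) + (2u_0^4-3u_0^2+1-\mathcal{E}^2)$, and a short computation using \eqref{first-order} identifies its zeroth-order coefficient with $u_0 u_0'' + 2(u_0')^2$, which is precisely the potential produced by $\mathcal{A}^*\mathcal{A}$. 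Since $L_-^2\ge0$ and $\mathcal{A}^*\mathcal{A}\ge0$, we get $\mathcal{M}_-\ge0$; moreover $\mathcal{M}_- w = 0$ forces $L_- w = 0$ and $\mathcal{A}w = 0$ simultaneously, whose only common solution is $w\in\mathrm{span}\{u_0\}$. This disposes of the imaginary part for every $\mathcal{E}\in(0,1)$.

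The real part is the genuine obstacle, and I expect it to be the hard part of the proof. The same idea applied to $L_+$ fails, because
\[
\mathcal{M}_+ = L_+^2 + \partial_x\bigl(u_0^2\partial_x\bigr) + (3u_0^2 - 5u_0^4),
\]
where the middle term is \emph{nonpositive} as a quadratic form, $\langle v,\partial_x(u_0^2\partial_x)v\rangle = -\int_\I u_0^2(v')^2\dd x\le0$, and the remaining potential is sign-indefinite (it equals $\tfrac12(\mathcal{E}^2-1)<0$ at the zeros of $u_0$). The naive attempt to write $\mathcal{M}_+ = \mathcal{C}^*\mathcal{C}$ with a single second-order operator $\mathcal{C}$ annihilating $u_0'$ leads to a Riccati equation with no elementary periodic solution, so no such perfect-square factorization is available. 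My plan is therefore to seek the more flexible decomposition
\[
\mathcal{M}_+ = \mathcal{C}^*\mathcal{C} + \mathcal{B}^*\mathcal{B} + r(x),
\]
with $\mathcal{C}$ a (possibly nonselfadjoint) second-order operator and $\mathcal{B}$ a first-order operator, both annihilating $u_0'$, and $r\ge0$ a potential. The free coefficient functions would be fixed by matching the symbol and the lower-order terms, using \eqref{first-order} to close the algebra at each order. The crux is to show that these functions can be chosen so that the leftover potential $r$ is everywhere nonnegative, i.e.\ that $\mathcal{M}_+$ admits a sum-of-squares certificate; this is where all the difficulty is concentrated.

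Once such a decomposition is in hand, positivity of $\mathcal{M}_+$ is immediate, and its kernel is cut out by $\mathcal{C}v = \mathcal{B}v = 0$: since $\mathcal{B}$ is first order it leaves a one-parameter family, and boundedness (by Floquet theory the second solution of $\mathcal{C}v=0$ grows) pins the kernel to $\mathrm{span}\{u_0'\}$. As a fallback, should an explicit certificate prove elusive, I would use that $L_-$ and $L_+$ are, after rescaling $\xi = x\sqrt{(1+\mathcal{E})/2}$, the Lam\'e operators $-\partial_\xi^2 + n(n+1)k^2\,\mathrm{sn}^2(\xi,k)$ with $n=1$ and $n=2$, whose Floquet--Bloch band edges are known in closed form; this reduces the nonnegativity of $\mathcal{M}_+$ to a finite algebraic check at those band edges. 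Combining the two parts then yields $\Lambda_{c=2}''(u_0)\ge0$ with kernel exactly $\mathrm{span}\{u_0,u_0'\}$, which is the assertion of the proposition.
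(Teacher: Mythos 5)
Your treatment of the imaginary part is correct and coincides with the paper's own argument: your identity $\mathcal{M}_- = L_-^2 + \mathcal{A}^*\mathcal{A}$ with $\mathcal{A} = u_0\partial_x - u_0'$ is exactly Lemma \ref{lemma-K-minus}, $\langle K_-(2)v,v\rangle_{L^2} = \|L_-v\|_{L^2}^2 + \|u_0v_x - u_0'v\|_{L^2}^2$, and your coefficient checks are right. But for the real part you have only a plan, and the step you yourself flag as "where all the difficulty is concentrated" --- choosing $\mathcal{C}$, $\mathcal{B}$ so that the leftover potential $r$ is pointwise nonnegative --- is a genuine gap, not a routine computation. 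There is concrete evidence it cannot be closed in this form: the authors state they were unable to find any positive representation of $K_+(2)$, and their partial one (Lemma \ref{lemma-K-plus}, valid only on the subspace of $u$ vanishing at the zeros of $u_0'$) is positive for \emph{every} $c \le 3$, whereas by Proposition \ref{proposition-eigenvalues} the operator $K_+(c)$ at small amplitude has negative directions for all $c \notin (c_-,c_+)$ with $c_\pm \to 2$ as $\mathcal{E} \to 1$; so any global sum-of-squares certificate would have to degenerate in a very delicate, $c$-specific way, and nothing in your symbol-matching scheme produces $r \ge 0$. Your Lam\'e fallback also fails as stated: the band edges of the second-order operators $L_\pm$ control the spectra of $L_\pm$, but $\mathcal{M}_+ = L_+^2 + \partial_x(u_0^2\partial_x) + 3u_0^2 - 5u_0^4$ is not a function of $L_+$, and you give no mechanism reducing the Floquet--Bloch spectrum of this fourth-order operator to a finite check at those edges.

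The missing idea is the integrable structure, which is how the paper gets around exactly the wall you hit. Since $E$ and $S$ generate commuting flows, the operators satisfy the intertwining relations $L_-K_+(2) = K_-(2)L_+$ (see (\ref{intertwiningK})). From your (and the paper's) sum-of-squares identity for $K_-(2)$, an averaging of the boundary terms over $[-NT_0,NT_0]$ shows that every \emph{bounded} solution of $K_-(2)v = 0$ on $\R$ is a multiple of $u_0$ (Lemma \ref{lemma-K-minus-positive}). Hence $K_+(2)u = 0$ forces $L_+u = Bu_0$, so $u = BU + Cu_0'$ where $L_+U = u_0$, and the explicit elliptic-function computation $K_+(2)U = Du_0$ with $D \neq 0$ (equation (\ref{MLPlut})) forces $B = 0$: the bounded kernel of $K_+(2)$ is exactly $\mathrm{span}\{u_0'\}$ (Lemma \ref{lemma-K-plus-positive}). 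Positivity then follows by continuation in $\mathcal{E}$ from the small-amplitude regime of Proposition \ref{proposition-eigenvalues}: the Floquet--Bloch bands of $K_+(2)$ depend continuously on $\mathcal{E}$ and can lose nonnegativity only if a band touches zero, which would create an extra bounded kernel element --- excluded by the rigidity just proved. Note that your final kernel argument presupposes the unproven decomposition, so it cannot substitute for this continuation step; without either a valid certificate for $\mathcal{M}_+$ or the intertwining-plus-continuation machinery, the real-part half of the proposition remains unproved.
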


\begin{remark}
In the proof of Proposition \ref{prop-positivity}, we show that the quadratic form
defined by the second variation $\Lambda_{c=2}''(u_0)$ restricted to purely
imaginary perturbations of the periodic wave can be decomposed as a sum of
squared quantities, hence is obviously nonnegative. In order to control the quadratic
form for the real perturbations to the periodic wave,
we use a continuation argument from the limit to the periodic waves of small amplitude,
combined with analysis of the second-order Schr\"{o}dinger operators with $T_0$-periodic
coefficients.
\end{remark}

\begin{remark}
Proposition \ref{prop-positivity} implies the spectral stability of the periodic wave
profile $u_0$ for every $\mathcal{E} \in (0,1)$, see the end of Section \ref{sec:operator}.
\end{remark}

Our final result establishes orbital stability of the periodic wave
(\ref{sn-periodic}) with respect to the subharmonic perturbations in $H^2_{\rm per}(0,T)$,
where $T > 0$ is any integer multiple of the period of $u_0$. Therefore, we use
$\I = (0,T)$ in the definition of all functionals
(\ref{energy})-(\ref{energy-R}). If we consider $\Lambda_c$ as defined
on $H^2_{\rm per}(0,T)$, we know from Proposition~\ref{prop-positivity}
that $\Lambda_c'(u_0) = 0$ and that the second
variation $\Lambda_{c=2}''(u_0)$ is strictly positive, except for two
neutral directions corresponding to symmetries. Since $\Lambda_{c=2}(\psi)$
is a conserved quantity under the evolution defined by the cubic
NLS equation (\ref{nls}), we obtain the following orbital stability result.

\begin{theorem}
\label{theorem-per}
Fix $\mathcal{E}\in (0,1)$ and let $T$ be an integer multiple
of the period $2T_0$ of $u_0$. For any $\epsilon > 0$, there exists
$\delta > 0$ such that, if $\psi_0 \in H^2_{\rm per}(0,T)$
satisfies
\begin{equation}
\label{bound-initial-per}
\|\psi_0 - u_0\|_{H^2_{\rm per}(0,T)} \le \delta,
\end{equation}
the unique global solution $\psi(\cdot,t)$ of the cubic NLS equation
(\ref{nls}) with initial data $\psi_0$ has the following property.
For any $t \in \R$, there exist $\xi(t) \in \R$ and $\theta(t) \in
\R/(2\pi\Z)$ such that
\begin{equation}
\label{bound-final-per}
\|e^{i (t + \theta(t))}\psi(\cdot + \xi(t),t) - u_0\|_{H^2_{\rm per}(0,T)}
\le \epsilon.
\end{equation}
Moreover $\xi$ and $\theta$ are continuously differentiable
functions of $t$ which satisfy
\begin{equation}
\label{bound-time-per}
|\dot \xi(t)| + |\dot \theta(t)| \le C \epsilon, \quad t \in \R,
\end{equation}
for some positive constant $C$.
\end{theorem}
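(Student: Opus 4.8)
The plan is to run a Lyapunov (energy--Casimir) argument in the rotating frame, using the conserved functional $\Lambda_{c=2}$ and modulating out the two continuous symmetries. Writing $\psi(x,t) = e^{-i t}\phi(x,t)$ transforms (\ref{nls}) into $i\phi_t + \phi_{xx} + (1-|\phi|^2)\phi = 0$, for which $u_0$ is a stationary solution, and the conclusion (\ref{bound-final-per}) is precisely the statement that $\phi(\cdot,t)$ stays close in $H^2_{\rm per}(0,T)$ to the group orbit $\mathcal{O} = \{e^{i\theta} u_0(\cdot-\xi) : \theta\in\R/2\pi\Z,\ \xi\in\R\}$. The symmetry group $G = \R\times(\R/2\pi\Z)$ acts by translation and phase rotation, and its orbit through $u_0$ has tangent space $\mathrm{span}\{u_0',\, i u_0\}$. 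Since $\Lambda_{c=2}$ is built from the $G$--invariant conserved quantities $E,Q,R$, it is invariant under the group action and conserved along the flow; moreover $u_0$ is a critical point, $\Lambda_{c=2}'(u_0)=0$, and by Proposition~\ref{prop-positivity} its second variation $L := \Lambda_{c=2}''(u_0)$ is a self-adjoint fourth-order operator on $L^2_{\rm per}(0,T)$ that is nonnegative with kernel exactly equal to the tangent space $\mathrm{span}\{u_0',\, iu_0\}$ (differentiating $\Lambda_{c=2}'(g\cdot u_0)=0$ along $G$ shows both generators lie in $\ker L$).

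First I would set up the modulation. Using the implicit function theorem one shows that there is a tubular neighbourhood of $\mathcal{O}$ in $H^2_{\rm per}(0,T)$ on which every $\phi$ decomposes uniquely as $\phi = e^{i\theta}\bigl[u_0(\cdot-\xi) + v\bigr]$, where the modulation parameters $(\theta,\xi)$ depend smoothly on $\phi$ and the remainder $v$ satisfies the two orthogonality conditions $\langle v, u_0'\rangle = \langle v, i u_0\rangle = 0$, with $\langle f,g\rangle = \re\int_0^T \bar f g \dd x$. Applying this pointwise in time, as long as $\phi(\cdot,t)$ remains in the tube, produces $C^1$ functions $\xi(t),\theta(t)$ and a remainder $v(\cdot,t)$ orthogonal to $\ker L$, whose $H^2$ norm measures the distance of $\phi(\cdot,t)$ to the orbit.

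The heart of the argument is to convert the qualitative positivity of Proposition~\ref{prop-positivity} into a coercive estimate. On the compact interval $(0,T)$ the operator $L$ has compact resolvent, hence discrete spectrum; nonnegativity together with the two--dimensional kernel then yields a spectral gap $\lambda_1 > 0$, so $\langle L v, v\rangle \ge \lambda_1 \|v\|_{L^2}^2$ for every $v$ orthogonal to $\mathrm{span}\{u_0', iu_0\}$. Since the leading part of the quadratic form is $2\,\|v_{xx}\|_{L^2}^2$, a G\r{a}rding inequality gives $\langle Lv,v\rangle \ge \|v_{xx}\|_{L^2}^2 - C\|v\|_{H^1}^2$, and combining this with the spectral gap and interpolation upgrades the bound to $\langle L v, v\rangle \ge c\,\|v\|_{H^2}^2$ on the orthogonal complement. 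Next I would expand the conserved quantity: by $G$--invariance and $\Lambda_{c=2}'(u_0)=0$,
\begin{equation}
\Lambda_{c=2}(\phi(\cdot,t)) - \Lambda_{c=2}(u_0) = \tfrac12 \langle L v, v\rangle + N(v),
\end{equation}
where $N$ collects the cubic and higher order terms. Here the structure of $\Lambda_{c=2}$ is favourable: the second derivative $\partial_x^2$ enters only through the quadratic term $|\psi_{xx}|^2$, so $N(v)$ depends on $v$ and $v'$ alone and satisfies $|N(v)| \le C\|v\|_{H^1}^3$ by the one--dimensional embedding $H^1 \hookrightarrow L^\infty$. Because the left-hand side is conserved and, after choosing $\theta(0),\xi(0)$ to place $\phi_0$ in the tube, is of order $\delta^2$ by (\ref{bound-initial-per}), a standard continuity argument bootstraps the coercive lower bound into $\|v(\cdot,t)\|_{H^2} \le C\delta$ for all $t$, which is (\ref{bound-final-per}) with $\epsilon$ proportional to $\delta$.

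Finally I would derive the bound (\ref{bound-time-per}) on the modulation parameters. Global existence and persistence of $H^2_{\rm per}(0,T)$ regularity for the cubic NLS equation are standard, so $\phi(\cdot,t)$ exists for all $t$ and the continuity argument keeps it in the tube, making the decomposition valid globally. Differentiating the orthogonality conditions in time and inserting the evolution equation for $v$ yields a $2\times 2$ linear system $A(v)\,(\dot\xi,\dot\theta)^{\top} = b(v)$ in which $A(0)$ is invertible and $\|b(v)\| \le C\|v\|_{H^2}$; inverting it gives $|\dot\xi(t)| + |\dot\theta(t)| \le C\epsilon$. The main obstacle is the coercivity step, and it is precisely here that the restriction to subharmonic (periodic) perturbations is essential: on the whole line the Floquet--Bloch bands of $L$ reach down to $0$, so $L$ has no spectral gap and $H^2$ coercivity fails, whereas on $H^2_{\rm per}(0,T)$ with $T$ a multiple of $2T_0$ the admissible spectrum is the discrete sample of those bands, with an isolated two--dimensional kernel, which is exactly what makes the energy argument close.
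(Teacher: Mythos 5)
Your proposal is correct and follows essentially the same route as the paper's proof in Section~\ref{sec:periodic}: the same modulation decomposition via the implicit function theorem with orthogonality to $\mathrm{span}\{u_0',\,i u_0\}$, the same coercivity of the conserved functional $\Lambda_{c=2}$ deduced from Proposition~\ref{prop-positivity} through a spectral gap on the torus plus G{\aa}rding's inequality, the same conservation-plus-continuity bootstrap for \eqref{bound-final-per}, and the same $2\times 2$ linear system obtained by differentiating the orthogonality conditions for \eqref{bound-time-per}. The only point you gloss is the justification that $\xi(t),\theta(t)$ are differentiable when $\psi$ is merely in $C(\R,H^2_{\rm per}(0,T))$ --- the paper handles this by first taking $H^4_{\rm per}$ initial data and concluding by density --- but this is a minor technicality and your remainder estimate $|N(v)|\le C\|v\|_{H^1}^3$ is in fact slightly sharper than the paper's $H^2$ bound.
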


\begin{remark}
\label{cauchy-per}
It is well known that the Cauchy problem for the cubic NLS equation
(\ref{nls}) is globally well posed in the Sobolev space
$H^s_{\rm per}(0,T)$ for any integer $s \ge 0$, see \cite{Bour}.
\end{remark}

\begin{remark}
\label{limitation-rem}
The proof of Theorem~\ref{theorem-per} shows that, when $\epsilon
\le 1$, one can take $\delta = \epsilon/\mathcal{C}$ for some
constant $\mathcal{C} \ge 1$ depending on $\mathcal{E}$ and
on the ratio $T/T_0$. We emphasize, however, that $\mathcal{C} \to
\infty$ as $T/T_0 \to \infty$. This indicates that, although a given
periodic wave is orbitally stable with respect to perturbations
with arbitrary large period $T$, the size of the
stability basin becomes very small when the ratio $T/T_0$ is large.
\end{remark}

Applying the same technique, we can also prove the orbital stability of the
black soliton (\ref{black-soliton}) with respect to perturbations in $H^2(\mathbb{R})$.
The details of this analysis are given in Part II, which is a companion paper
to this work.

The rest of this article is organized as follows.
Section~\ref{sec:small} contains the proof of Proposition~\ref{prop-per}.
The sufficient condition of Proposition \ref{prop-explicit} is proved
in Section \ref{sec-positivity}. In Section~\ref{sec:positive}, we
provide a representation of the quadratic form associated with
$\Lambda_{c}''(u_0)$ as a sum of squared quantities.
Section \ref{sec:operator} reports the continuation
argument, which yields the proof of Proposition \ref{prop-positivity}.
Section~\ref{sec:periodic} is devoted to the proof of Theorem~\ref{theorem-per}.
Appendix A summarizes some explicit computations
with the use of Jacobi elliptic functions.

\section{Positivity of $\Lambda_c''(u_0)$ for periodic waves of
small amplitude}
\label{sec:small}

This section presents the proof of Proposition~\ref{prop-per}.

Let $u_0$ be the periodic wave profile defined by
(\ref{sn-periodic}) for some $\mathcal{E} \in (0,1)$. We consider
perturbations of $u_0$ of the form $\psi = u_0 + u + i v$, where $u,v$
are real-valued. Since $u_0$ is a
critical point of both $E$ and $S$ defined by (\ref{energy}) and (\ref{energy-S}),
the leading order contributions to
the renormalized quantities $E(\psi) - E(u_0)$ and $S(\psi) - S(u_0)$
are given by the second variations
\begin{equation}
\label{secondE}
\half \langle E''(u_0)[u,v], [u,v]\rangle \,=\,
\int_\I \left[ u_x^2 + (3 u_0^2 - 1) u^2 \right] \dd x + \int_\I
\left[ v_x^2 + (u_0^2 - 1) v^2 \right] \dd x
\end{equation}
and
\begin{align}
\nonumber
\half \langle S''(u_0)[u,v], [u,v]\rangle \,&=\,
\int_\I \left[ u_{xx}^2 + 5 u_0^2 u_x^2 + (-5 u_0^4 + 15 u_0^2 - 4 +
3 \mathcal{E}^2) u^2 \right] \dd x \\ \label{secondS}
&\,\quad + \int_\I \left[ v_{xx}^2 + 3 u_0^2 v_x^2 + (u_0^2 - 1)
v^2 \right] \dd x.
\end{align}

In the proof of the orbital stability theorem (Theorem \ref{theorem-per})
given in Section~\ref{sec:periodic}, we eventually take $\I = (0,T)$, where $T$ is a multiple of
the period $2T_0$ of the periodic wave profile $u_0$, and we assume that $u,v \in H^2_{\rm per}(0,T)$.
In this case, the formulas (\ref{secondE}) and (\ref{secondS}) represent the second variations of
the functionals $E$ and $S$ defined on the space $H^2_{\rm per}(0,T)$. However, here
and in the following three sections, we only investigate the positivity properties of the
second variations. For that purpose, it is more convenient to take $\I = \R$ and to assume that $u,v \in H^2(\R)$.

As is clear from (\ref{secondE}) and (\ref{secondS}), the second
variations $E''(u_0)$ and $S''(u_0)$ are block-diagonal in the sense
that the contributions of $u$ and $v$ do not mix together (this is the
main reason for which we restrict our analysis to {\em real-valued}
wave profiles $u_0$). We can thus write
\begin{equation*}
\half \langle E''(u_0)[u,v], [u,v]\rangle \,=\,
\langle L_+ u,u\rangle_{L^2} + \langle L_- v,v\rangle_{L^2}
\end{equation*}
and
\begin{equation*}
\half \langle S''(u_0)[u,v], [u,v]\rangle \,=\,
\langle M_+ u,u\rangle_{L^2} + \langle M_- v,v\rangle_{L^2},
\end{equation*}
where $\langle\cdot\,,\cdot\rangle_{L^2}$ is the scalar product
on $L^2(\R)$ and the operators $L_\pm$ and $M_\pm$ are
defined by
\begin{equation}
\label{operatorsdef}
\begin{array}{l}
L_+ \,=\, -\partial_x^2 + 3 u_0^2 - 1, \\[1mm]
L_- \,=\, -\partial_x^2 + u_0^2 - 1,
\end{array} \qquad
\begin{array}{l}
M_+ \,=\,  \partial_x^4 - 5 \partial_x u_0^2 \partial_x -5 u_0^4 +
15 u_0^2 - 4 + 3 \mathcal{E}^2, \\[1mm]
M_- \,=\, \partial_x^4 - 3 \partial_x u_0^2 \partial_x + u_0^2 - 1.
\end{array}
\end{equation}
Note that  $L_+ u_0' = M_+ u_0' = 0$, due to
the translation invariance of the cubic NLS equation (\ref{nls}),
and that $L_- u_0 = M_- u_0 = 0$, due to the gauge invariance
$\psi \mapsto e^{i\theta}\psi$ with $\theta \in \mathbb{R}$.

We now fix $c \in \R$ and consider the functional $\Lambda_c(\psi) =
S(\psi) - c E(\psi)$, as in (\ref{Lyapunov-functional}). We have
\begin{equation}
\label{Lambdasecond}
\half \langle \Lambda_c''(u_0)[u,v], [u,v]\rangle \,=\,
\langle K_+(c) u,u\rangle_{L^2} + \langle K_-(c) v,v\rangle_{L^2},
\end{equation}
where $K_{\pm}(c) = M_{\pm} - c L_{\pm}$. By construction, $K_\pm(c)$
are selfadjoint, fourth-order differential operators on $\R$ with
$T_0$-periodic coefficients, where $T_0$ is the period of $|u_0|$.
Our goal is to show that these operators are nonnegative, at least if
$\mathcal{E}$ is sufficiently close to $1$ and if the parameter $c$
is chosen appropriately. Equivalently, the quadratic forms in the
right-hand side of (\ref{Lambdasecond}) are nonnegative for all
$u,v \in H^2(\R)$ under the same assumptions on $\mathcal{E}$ and $c$.

Before going further, let us explain why a careful choice of the
parameter $c$ is necessary. Assume for simplicity that
$\mathcal{E} = 1$, so that $u_0 = 0$. In that case, we have
\begin{align}
\nonumber
\langle K_\pm(c) u,u\rangle_{L^2} \,&=\, \int_\R \left[ u_{xx}^2 - c u_x^2 +
(c-1) u^2 \right] \dd x \\
\label{zero-solution}
\,&=\, \int_\R \left( u_{xx} + \frac{c}{2}
u \right)^2 \dd x - \left(1 -\frac{c}{2} \right)^2 \int_\R u^2 \dd x.
\end{align}
This simple computation shows that the second variation
$\Lambda_c''(0)$ is nonnegative if and only if $c = 2$.
By a perturbation argument, we shall verify that $\Lambda_c''(u_0)$
remains nonnegative for $\mathcal{E}$ sufficiently close to $1$,
provided $c$ is close enough to $2$. More precisely, we shall prove
that the operators $K_+(c)$ and $K_-(c)$ are nonnegative and have
only the following zero modes
 \begin{equation}
 \label{zero-eigenfunctions}
 K_+(c) u_0' = 0, \quad \mbox{\rm and} \quad K_-(c) u_0 = 0.
 \end{equation}
This means that the second variation $\Lambda''_c(u_0)$
is strictly positive, except along the subspace spanned by the
eigenfunctions $u_0'$ and $i u_0$, which correspond to symmetries
of the NLS equation (\ref{nls}). Note that, when $\mathcal{E} = 1$,
the second variation $\Lambda''_c(u_0)$ vanishes on a four-dimensional subspace,
according to the representation (\ref{zero-solution}), but the
degeneracy disappears as soon as $\mathcal{E} < 1$.

The proof of Proposition \ref{prop-per} relies on perturbation theory
for the Floquet--Bloch spectrum of the operators $K_{\pm}(c)$. First,
we normalize the period of the profile $u_0$ to $2\pi$ by using the
transformation $u_0(x) = U(\ell x)$, where $\ell = \pi/T_0$, so
that $U(z+2\pi) = U(z)$. The second-order differential equation
satisfied by rescaled profile $U(z)$, as well as the associated
first-order invariant, are given by
\begin{equation}
\label{wave-problem}
\ell^2 \frac{d^2 U}{d z^2} + U - U^3 = 0 \quad \Rightarrow \quad
\ell^2 \left( \frac{d U}{d z} \right)^2 = \frac{1}{2} \left[
(1 - U^2)^2 - \mathcal{E}^2 \right].
\end{equation}
In agreement with the exact solution (\ref{sn-periodic}) we assume
that $U$ is odd with $U'(0) > 0$, so that $U \in H^2_{\rm per}(0,2\pi)$
is entirely determined by the value of $\mathcal{E}
\in (0,1)$. As was already mentioned, it is known for the soft
potential in (\ref{wave-problem}) that the map $(0,1) \ni \mathcal{E}
\mapsto \ell \in (0,1)$ is strictly increasing and onto
\cite{GH2}. The following proposition specifies the precise asymptotic
behavior of the rescaled profile $U$ as $\mathcal{E} \to 1$.

\begin{proposition}
The map $(0,1) \ni \mathcal{E} \mapsto (\ell,U) \in \R \times
H^2_{\rm per}(0,2\pi)$ can be uniquely described, when
$\mathcal{E} \to 1$, by a small parameter $a > 0$ in the
following way:
\begin{equation}
\label{proposition-expansion}
\mathcal{E} = 1 - a^2 + \mathcal{O}(a^4), \quad
\ell^2 = 1 - \frac{3}{4} a^2 + \mathcal{O}(a^4), \quad
U(z) = a U_0(z) + \mathcal{O}_{H^2_{\rm per}(0,2\pi)}(a^3),
\end{equation}
where $U_0(z) = \sin(z)$. \label{prop-expansion}
\end{proposition}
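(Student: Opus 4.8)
The plan is to treat the profile equation in (\ref{wave-problem}) as a bifurcation problem in the space of odd $2\pi$-periodic functions and to apply a Lyapunov--Schmidt reduction near the trivial solution. Writing $F(U,\ell) = \ell^2 U'' + U - U^3$, the linearization at $U=0$ is $L_\ell = \ell^2\partial_z^2 + 1$, which on the sine basis acts by $L_\ell \sin(nz) = (1 - \ell^2 n^2)\sin(nz)$. At $\ell = 1$ the kernel, restricted to odd functions, is exactly $\mathrm{span}\{\sin z\}$, while for every higher mode $n \ge 2$ one has $1 - \ell^2 n^2 \le 1 - 4\ell^2 < 0$ for $\ell$ near $1$, with the quantity bounded away from zero uniformly in $n$. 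This is the bifurcation point from which the cnoidal branch emanates as $\mathcal{E}\to 1$.

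First I would introduce the amplitude parameter $a$ as the first Fourier coefficient, $a = \frac1\pi\int_0^{2\pi} U(z)\sin z\,\dd z$, and decompose $U = a\sin z + W$, where $W$ lies in the closed span $\mathcal{Y}$ of $\{\sin(nz):n\ge 2\}$. Since $L_\ell$ is boundedly invertible on $\mathcal{Y}$ uniformly for $\ell$ close to $1$, the range equation $L_\ell W = Q(U^3)$, where $Q$ denotes the orthogonal projection onto $\mathcal{Y}$, can be solved for $W = W(a,\ell)$ by the implicit function theorem. Using $\sin^3 z = \tfrac34\sin z - \tfrac14\sin 3z$ one gets $Q(U^3) = -\tfrac14 a^3\sin 3z + \mathcal{O}(a^5)$, hence $W = \mathcal{O}_{H^2_{\rm per}(0,2\pi)}(a^3)$, which already yields $U = a\sin z + \mathcal{O}_{H^2_{\rm per}(0,2\pi)}(a^3)$. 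Projecting the equation $L_\ell U = U^3$ onto $\sin z$ gives the scalar bifurcation equation $(1-\ell^2)a = \frac1\pi\langle U^3,\sin z\rangle = \tfrac34 a^3 + \mathcal{O}(a^5)$, from which $\ell^2 = 1 - \tfrac34 a^2 + \mathcal{O}(a^4)$; note that $a>0$ forces $\ell < 1$, consistent with the known monotone bijection $\mathcal{E}\mapsto\ell$ onto $(0,1)$.

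It remains to connect $a$ to $\mathcal{E}$, which I would do through the first-order invariant in (\ref{wave-problem}). Evaluating it at a turning point, where $U' = 0$ and $U = \max U =: U_{\max}$, gives $(1 - U_{\max}^2)^2 = \mathcal{E}^2$, hence $U_{\max}^2 = 1 - \mathcal{E}$. Because $U = a\sin z + \mathcal{O}(a^3)$ attains its maximum $a + \mathcal{O}(a^3)$ near $z = \pi/2$, we obtain $U_{\max}^2 = a^2 + \mathcal{O}(a^4)$ and therefore $\mathcal{E} = 1 - a^2 + \mathcal{O}(a^4)$. The precise orders of the remainders are pinned down by the reflection symmetry: the map $U \mapsto -U$ sends solutions to solutions while fixing $\ell$ and $\mathcal{E}$ and reversing the sign of $a$, so $U(\cdot\,;a)$ is odd in $a$ and both $\ell^2$ and $\mathcal{E}$ are even in $a$, which forces the next corrections to occur at orders $a^3$ and $a^4$, respectively, as claimed. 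Uniqueness of the description follows from the implicit function theorem together with the normalization that $U$ be odd with $U'(0)>0$, and from the fact that $a \mapsto \mathcal{E} = 1 - a^2 + \cdots$ is a strictly monotone local diffeomorphism near $\mathcal{E}=1$, so the known bijection $\mathcal{E}\mapsto\ell$ is faithfully reparametrized by $a$.

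The only genuinely delicate point is the uniform-in-$\ell$ invertibility of $L_\ell$ on the complement $\mathcal{Y}$, which is what legitimizes the Lyapunov--Schmidt reduction in a full neighborhood of the bifurcation point rather than at a single value of $\ell$; once the nonresonance of the modes $n\ge 2$ is recorded, the remaining steps reduce to routine power counting. An entirely equivalent route would be to insert the explicit formula (\ref{sn-periodic}) together with the small-$k$ asymptotics of $\mathrm{sn}$ and of $K(k)$, using $k^2 = (1-\mathcal{E})/(1+\mathcal{E}) = \tfrac12 a^2 + \mathcal{O}(a^4)$; I would nonetheless prefer the bifurcation argument above, since it stays within the PDE framework emphasized in this paper and avoids elliptic-function identities.
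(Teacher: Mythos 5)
Your proposal is correct and takes essentially the same route as the paper: a Lyapunov--Schmidt reduction in the space of odd $2\pi$-periodic functions, with the range equation solved via the uniform invertibility of $\ell^2\partial_z^2+1$ on the modes $n\ge 2$, the scalar bifurcation equation giving $\ell^2 = 1-\tfrac34 a^2+\mathcal{O}(a^4)$, and the first-order invariant supplying $\mathcal{E} = 1-a^2+\mathcal{O}(a^4)$. Your turning-point evaluation of the invariant (using $U'=0$ at the maximum to get $U_{\max}^2 = 1-\mathcal{E}$) and the parity-in-$a$ argument pinning the remainder orders are just concrete instantiations of steps the paper's proof leaves implicit.
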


\begin{proof}
The argument is rather standard, so we just mention here the
main ideas. Since the wave profile $U(z)$ is an odd function of $z$,
we work in the space
$$
L^2_{{\rm per},{\rm odd}}(0,2\pi) = \{U \in L^2_{\rm loc}(\R) : \quad
U \hbox{ is odd and } 2\pi\hbox{--periodic}\}.
$$
We use the Lyapunov--Schmidt decomposition $\ell^2 = 1 +\tilde{\ell}$,
$U = a U_0 + \tilde{U}$, where the perturbation $\tilde U \in
H^2_{{\rm per},{\rm odd}}(0,2\pi)$ is orthogonal to $U_0$ in $L^2_{\rm per}
(0,2\pi)$, namely $\langle U_0,\tilde{U}\rangle_{L^2_{\rm per}} = 0$.
The quantities $\tilde\ell$ and $\tilde U$ can be determined by
projecting equation (\ref{wave-problem}) onto the one-dimensional
subspace ${\rm Span}\{U_0\} \subset L^2_{{\rm per},{\rm odd}}(0,2\pi)$
and its orthogonal complement. This gives the relations
\begin{equation}
\label{proj1}
a \tilde{\ell} = -\frac{\langle U_0, (a U_0 + \tilde{U})^3
\rangle_{L^2_{\rm per}}}{\langle U_0,U_0 \rangle_{L^2_{\rm per}}},
\end{equation}
and
\begin{equation}
\label{proj2}
(1+\tilde{\ell}) \tilde{U}'' + \tilde{U} = (a U_0 + \tilde{U})^3 -
\frac{\langle U_0, (a U_0 + \tilde{U})^3\rangle_{L^2_{\rm per}}}{
\langle U_0, U_0\rangle_{L^2_{\rm per}}}\,U_0.
\end{equation}
For any small $\tilde \ell$ and $a$, it is easy to verify (by
inverting the linear operator in the left-hand side and using a fixed
point argument) that equation (\ref{proj2}) has a unique solution
$\tilde{U} \in H^2_{{\rm per},{\rm odd}}(0,2\pi)$ such that $\langle
U_0,\tilde{U}\rangle_{L^2_{\rm per}} = 0$ and $\tilde{U} =
\mathcal{O}_{H^2_{\rm per}(0,2\pi)}(a^3)$ as $a \to 0$. This solution
depends smoothly on $\tilde\ell$, so if we substitute it into
the right-hand side of (\ref{proj1}) we obtain an equation for
$\tilde\ell$ only, which can in turn be solved uniquely for small
$a > 0$. The result is
$$
\tilde{\ell} = -a^2 \frac{\langle U_0, U_0^3\rangle_{L^2_{\rm per}}}{
\langle U_0, U_0\rangle_{L^2_{\rm per}}}  + \mathcal{O}(a^4) =
-\frac{3}{4} a^2 + \mathcal{O}(a^4).
$$
Finally the expression $\mathcal{E} = 1 - a^2 + \mathcal{O}(a^4)$
follows from the first-order invariant
(\ref{wave-problem}), if we use the above decompositions
and the asymptotic formulas for $\tilde\ell$
and $\tilde U$.
\end{proof}

We next study the Floquet--Bloch spectrum of the operators $K_{\pm}(c) = M_{\pm} - cL_{\pm}$. 
Using the same rescaling $z = \ell x$ and the Floquet parameter $\kappa$, 
we write these operators in the following form
\begin{align*}
P_-(c,\kappa) \,&=\, \ell^4 (\partial_z + i \kappa)^4 - 3 \ell^2
(\partial_z + i\kappa)U^2 (\partial_z + i \kappa)
+ c \ell^2 (\partial_z + i \kappa)^2 + (c-1) (1 - U^2), \\
P_+(c,\kappa) \,&=\,  \ell^4 (\partial_z + i \kappa)^4 - 5 \ell^2
(\partial_z + i\kappa)U^2 (\partial_z + i \kappa)
+ c \ell^2 (\partial_z + i \kappa)^2 \\
&\quad\, -5 U^4 + (15-3c) U^2 - 4 + 3
\mathcal{E}^2 + c.
\end{align*}
Note that the operators $P_{\pm}(c,\kappa)$ have $\pi$-periodic coefficients, hence
we can look for $\pi$-periodic Bloch wave functions so that $\kappa$ can be 
defined in the Brillouin zone $[-1,1]$. However, for computational
simplicity of the perturbation expansions, it is more convenient
to work with the $2\pi$-periodic Bloch wave functions, when 
$\kappa$ is defined in the Brillouin zone
$\mathbb{T} = \left[-\frac{1}{2},\frac{1}{2}\right]$. 
If $\kappa \in \mathbb{T}$ and if the function $w(\cdot,\kappa) \in
H^4_{\rm per}(0,2\pi)$ satisfies
\begin{equation}
\label{Floquet-spectrum}
P_{\pm}(c,\kappa) w(z,\kappa) = \lambda(\kappa) w(z,\kappa), \quad z \in \R,
\end{equation}
for some $\lambda(\kappa) \in \R$ and either sign,
then defining $u(x,\kappa) = e^{i \kappa \ell x} w(\ell x,\kappa)$ we obtain
a function $u(\cdot,\kappa) \in L^\infty(\R) \cap H^4_{\rm loc}(\R)$
such that
$$
K_{\pm}(c) u(x,\kappa) = \lambda(\kappa) u(x,\kappa), \quad x \in \R.
$$
This precisely means that $\lambda(\kappa)$ belongs to the Floquet--Bloch
spectrum of $K_{\pm}(c)$.

By Proposition~\ref{prop-expansion}, when $\mathcal{E}$ is
close to $1$, the operators $P_\pm(c,\kappa)$ can be expanded as
$$
P_{\pm}(c,\kappa) = P^{(0)}(c,\kappa) + a^2 P_{\pm}^{(1)}(c,\kappa) +
\mathcal{O}_{H^4_{\rm per}(0,2\pi) \to L^2_{\rm per}(0,2\pi)}(a^4),
$$
where
\begin{align*}
P^{(0)}(c,\kappa) \,&=\,  (\partial_z + i \kappa)^4 +
c (\partial_z + i \kappa)^2 + c - 1, \\
P_{-}^{(1)}(c,\kappa) \,&=\, -\frac{3}{2} (\partial_z + i \kappa)^4
- 3 (\partial_z + i\kappa) U_0^2 (\partial_z + i \kappa)
- \frac{3}{4} c (\partial_z + i \kappa)^2 + (1-c) U_0^2, \\
P_{+}^{(1)}(c,\kappa) \,&=\, -\frac{3}{2} (\partial_z + i \kappa)^4
- 5 (\partial_z + i\kappa) U_0^2 (\partial_z + i \kappa)
- \frac{3}{4} c  (\partial_z + i \kappa)^2 + (15-3c) U_0^2 - 6.
\end{align*}

The operator $P^{(0)}(c,\kappa)$ has constant coefficients, and its spectrum
in the space $L^2_{\rm per}(0,2\pi)$ consists of a countable family of
real eigenvalues $\{\lambda_n^{(0)}(\kappa) \}_{n \in \mathbb{Z}}$ given by
\begin{equation}
\label{lambda-n}
\lambda_n^{(0)}(\kappa) = (\kappa+n)^4 - c (\kappa+n)^2 + c - 1,
\quad n \in \mathbb{Z}.
\end{equation}
As was already observed, one has $\lambda_n^{(0)}(\kappa) \ge 0$ for all $n
\in \mathbb{Z}$ and all $\kappa \in \mathbb{T}$ if and only if $c = 2$.
This is the case represented in Figure~\ref{fig2} (left), where it
is clear that all spectral bands $\{\lambda_n^{(0)}(\kappa)\}_{\kappa \in \mathbb{T}}$
are strictly positive, except for two bands corresponding to
$n = \pm 1$ which touch the origin at $\kappa = 0$.

\begin{figure}[h]
\begin{center}
\includegraphics[scale=0.32]{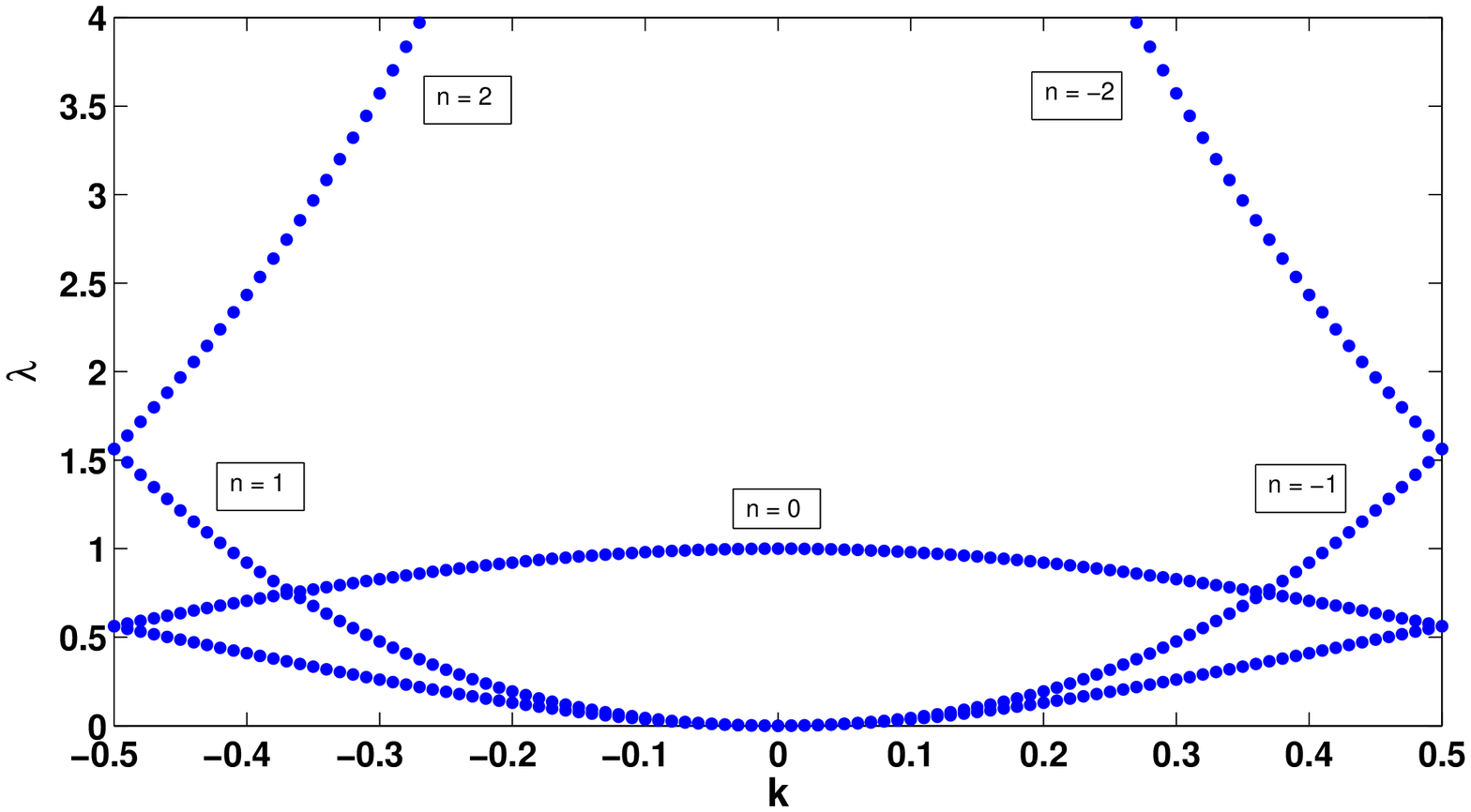}
\includegraphics[scale=0.32]{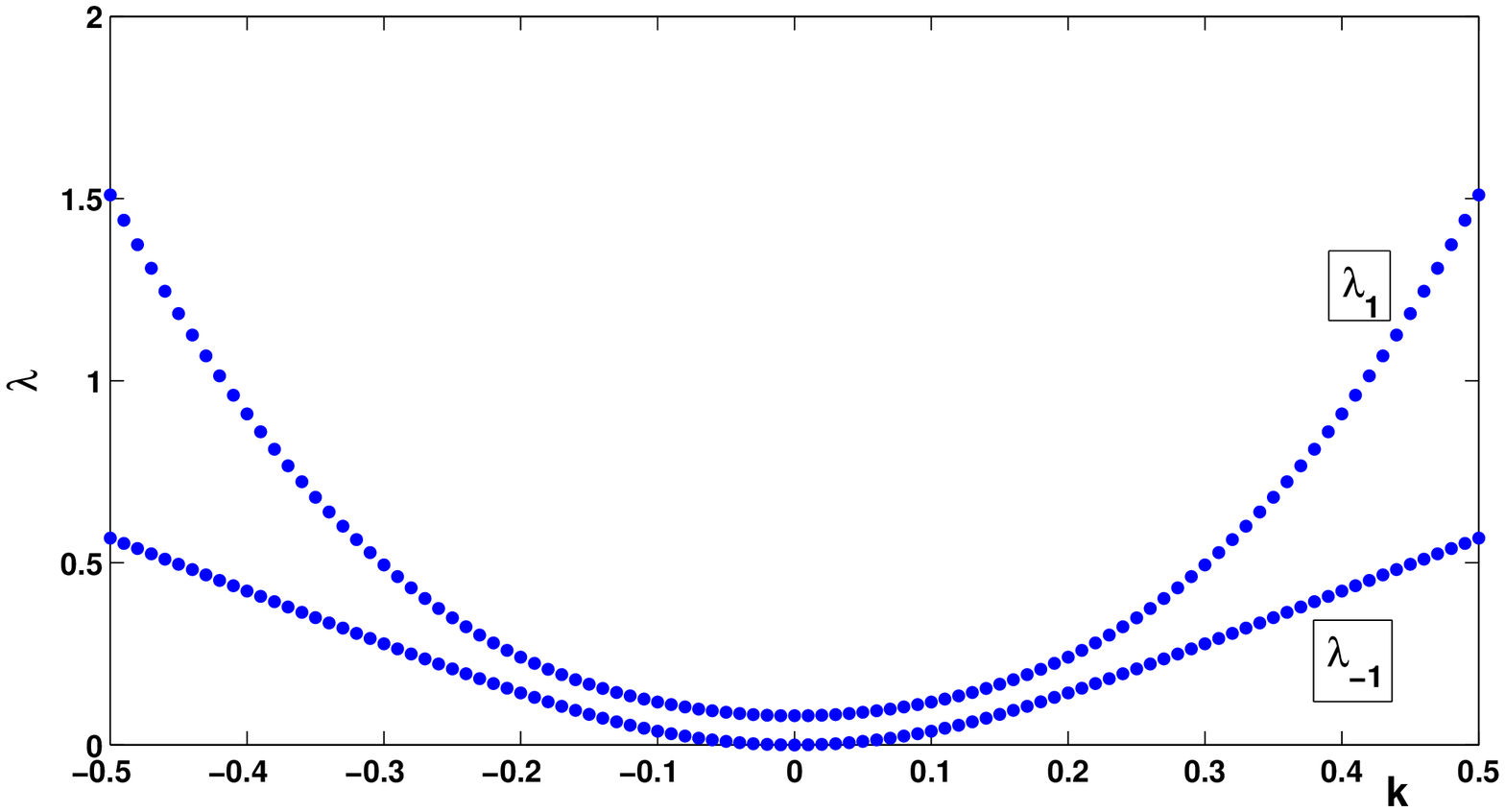}
\end{center}
\caption{\label{fig2}
Left: spectral bands given by (\ref{lambda-n})
for $c = 2$ and $a = 0$. Right: spectral bands given by
the matrix eigenvalue problem (\ref{perturbed-matrix}) for $c = 2$
and $a = 0.2$.}
\end{figure}

For small $a > 0$, the eigenvalues of the perturbed operators
$P_\pm(c,\kappa)$ are denoted by $\lambda_n^{\pm}(\kappa)$ with $n \in
\mathbb{Z}$, and we number them in such a way that
$\lambda_n^\pm(\kappa) \to \lambda_n^{(0)}(\kappa)$ as $a \to 0$ for fixed
$\kappa \in \mathbb{T}$.  By classical perturbation theory, we know
that the eigenvalues $\lambda_n^\pm(\kappa)$ stay bounded away from
zero for $n \neq \pm 1$, so it remains to study how the bands
$\{\lambda_1^\pm(\kappa)\}_{\kappa \in \mathbb{T}}$ and
$\{\lambda_{-1}^\pm(\kappa) \}_{\kappa \in \mathbb{T}}$ behave near
$\kappa = 0$ as $a \to 0$.  The following proposition indicates that
these bands separate from each other when $a > 0$, so that one band
still touches the origin at $\kappa = 0$ while the other one remains
strictly positive for all $\kappa \in \mathbb{T}$. In other words, the
degeneracy of the limiting case $c = 2$, $a = 0$ is unfold by the
perturbation as soon as $a > 0$. This phenomenon is illustrated in
Figure~\ref{fig2} (right), which shows the solutions of the matrix
eigenvalue problem (\ref{perturbed-matrix}) obtained below.

\begin{proposition}
\label{proposition-eigenvalues}
If $a > 0$ is sufficiently small and $c \in (c_-,c_+)$, where
\begin{equation}
\label{expansion-c}
c_{\pm} = 2 \pm \sqrt{2} a + \mathcal{O}(a^2),
\end{equation}
the operator $K_{\pm}(c)$ has exactly one Floquet--Bloch band
denoted by $\{\lambda_{-1}^\pm(\kappa)\}_{\kappa \in \mathbb{T}}$ that touches
the origin at $\kappa = 0$, while all other bands are strictly positive.
Moreover, for any $\nu < \sqrt{2}$, there exist positive constants
$C_1, C_2, C_3$ (independent of $a$) such that, if $|c-2| \le \nu a$,
one has
\begin{equation}
\label{expansion-lambda-bounds}
\lambda_{-1}^\pm(\kappa) \ge C_1 \kappa^2, \quad
\lambda_1^\pm(\kappa) \ge C_2 (a^2+\kappa^2), \quad \hbox{and}\quad
\lambda_n^\pm(\kappa) \ge C_3, \quad n \in \mathbb{Z} \setminus \{+1,-1\},
\end{equation}
for all $\kappa \in \mathbb{T}$.
\end{proposition}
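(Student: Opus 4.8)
The plan is to separate the Floquet--Bloch bands into those that can reach the origin and those that cannot, and to reduce the former to an explicit $2\times 2$ matrix eigenvalue problem. For every fixed $n \in \Z \setminus \{+1,-1\}$ the unperturbed eigenvalue $\lambda_n^{(0)}(\kappa)$ in (\ref{lambda-n}) stays bounded below by a positive constant, uniformly for $\kappa \in \mathbb{T}$ and for $c$ near $2$; indeed at $c = 2$ one has $\lambda_n^{(0)}(\kappa) = ((\kappa+n)^2-1)^2$, which vanishes on $\mathbb{T}$ only for $n = \pm 1$ at $\kappa = 0$. Classical regular perturbation theory then gives the third bound in (\ref{expansion-lambda-bounds}) for all $n \neq \pm 1$ once $a$ is small. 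The same argument, applied on the set $\{\,|\kappa| \ge \kappa_0\,\}$ for a fixed small $\kappa_0 > 0$, controls the two remaining bands away from the origin: since $\lambda_{\pm1}^{(0)}(\kappa) = \kappa^2(\kappa \pm 2)^2 \ge \tfrac{9}{4}\kappa^2$ at $c = 2$, the perturbed bands $\lambda_{\pm1}^\pm(\kappa)$ obey the desired quadratic lower bound there as well. All the difficulty is thus concentrated in the window $|\kappa| \le \kappa_0$, where the two bands nearly touch the origin and a degenerate perturbation analysis is required.

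In that window I would carry out a Lyapunov--Schmidt (Schur complement) reduction of the eigenvalue problem (\ref{Floquet-spectrum}) onto the two-dimensional subspace $\mathrm{Span}\{e^{iz}, e^{-iz}\}$ spanned by the $n = \pm 1$ eigenfunctions of $P^{(0)}(c,\kappa)$. Because all complementary Fourier modes carry unperturbed eigenvalues bounded away from zero, the orthogonal block of $P_\pm(c,\kappa) - \lambda$ is boundedly invertible for $a,\kappa,\lambda$ small, and eliminating it yields a $2\times 2$ Hermitian problem whose matrix, to leading order, is the restriction of $P^{(0)} + a^2 P_\pm^{(1)}$ to this subspace. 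The harmonic $U_0^2 = \tfrac12(1 - \cos 2z)$ couples $e^{iz}$ to $e^{-iz}$ through its $e^{\pm 2iz}$ part, and a short computation of the relevant inner products gives, with $\delta := c - 2$ and at the natural scaling $\kappa \sim a \sim \delta$, the reduced matrix
\begin{equation}
\label{perturbed-matrix}
A_\pm(c,\kappa) = \begin{pmatrix} 4\kappa^2 + a^2 - 2\delta\kappa & \mp a^2 \\ \mp a^2 & 4\kappa^2 + a^2 + 2\delta\kappa \end{pmatrix},
\end{equation}
the sign of the off-diagonal entry being the only difference between the cases $K_+$ and $K_-$. This is the matrix plotted in Figure~\ref{fig2} (right), and its spectrum depends only on the modulus of the off-diagonal entry, so both cases are treated at once.

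The eigenvalues of (\ref{perturbed-matrix}) are
\begin{equation*}
4\kappa^2 + a^2 \pm \sqrt{a^4 + 4\delta^2\kappa^2},
\end{equation*}
which are even in $\kappa$. The upper branch is bounded below by $4\kappa^2 + 2a^2 \ge 2(a^2 + \kappa^2)$, which gives the middle bound in (\ref{expansion-lambda-bounds}) and identifies it with the band $\lambda_1^\pm$. The lower branch vanishes at $\kappa = 0$ — consistently with the exact identities $K_+ u_0' = 0$ and $K_- u_0 = 0$, which force $\lambda_{-1}^\pm(0) = 0$ for every $a$ — and its second derivative at $\kappa = 0$ equals $2(4 - 2\delta^2/a^2)$. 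Hence this band is convex, and the lower branch nonnegative, precisely when $\delta^2 \le 2a^2$, i.e. when $c \in [c_-,c_+]$ with $c_\pm = 2 \pm \sqrt{2}\,a + \mathcal{O}(a^2)$, which is (\ref{expansion-c}); and if $|c-2| \le \nu a$ with $\nu < \sqrt 2$ then squaring the inequality $4\kappa^2 + a^2 - \sqrt{a^4 + 4\delta^2\kappa^2} \ge C_1 \kappa^2$ shows it holds with $C_1 = 4 - 2\nu^2 > 0$, giving the first bound in (\ref{expansion-lambda-bounds}).

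The main obstacle is to promote these leading-order statements about the model matrix (\ref{perturbed-matrix}) into rigorous bounds on the true bands $\lambda_{\pm1}^\pm(\kappa)$, uniformly for $\kappa \in [-\kappa_0,\kappa_0]$ and for $c$ in the stated range, with constants independent of $a$. One must first check that the Schur-complement reduction is valid on the whole window and that the discarded contributions — the $\mathcal{O}(a^4)$ remainder in $P_\pm$, the $\kappa$-dependence of $P_\pm^{(1)}$, and the $\mathcal{O}(\kappa^3)$ corrections to $\lambda_{\pm1}^{(0)}$ — perturb the entries of $A_\pm$ only at higher order. The delicate point is the lower band: at the relevant scale $\kappa \sim a$ its value is only $\mathcal{O}(a^2)$, so one must show that every error term is genuinely of higher order in $a$ relative to this $\mathcal{O}(a^2)$ size, in order that the strict inequality $4 - 2\nu^2 > 0$ survive and that $C_1$ can be chosen independent of $a$. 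Patching this with the gap estimate on $\{|\kappa| \ge \kappa_0\}$ then yields (\ref{expansion-lambda-bounds}) with $a$-independent constants and completes the proof.
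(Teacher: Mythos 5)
Your proposal is correct and follows essentially the same route as the paper: the same Lyapunov--Schmidt reduction of the $n=\pm1$ modes to a $2\times 2$ matrix (which agrees with the paper's matrix $A$ to leading order, up to an immaterial sign convention for the off-diagonal entry $\mp a^2$, since only its modulus enters the spectrum), the same eigenvalue formula $a^2+4\kappa^2\pm\sqrt{a^4+4\delta^2\kappa^2}$, and, crucially, the same use of the exact identities $K_+(c)u_0'=0$ and $K_-(c)u_0=0$ to pin the lower band exactly to zero at $\kappa=0$. The uniform error bookkeeping you correctly identify as the remaining obstacle in your final paragraph is precisely what the paper executes, via explicit trace/determinant expansions with remainders $\mathcal{O}\bigl((a^2+\kappa^2)(|\gamma|+a^2+\kappa^2)\bigr)$ and a two-regime analysis $|\kappa|\le a$, $|\kappa|\ge a$ in place of your single squaring argument.
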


\begin{proof}
From (\ref{lambda-n}) we know that, if $|c - 2|$ is sufficiently
small, there exists a constant $C > 0$ (independent of $c$)
such that
\begin{equation}
\label{resolvent}
0 < \lambda_n^{(0)}(\kappa)^{-1} \leq C \quad \hbox{for all } n \in \mathbb{Z}
\setminus\{+1,-1\} \hbox{ and all } \kappa \in \mathbb{T}.
\end{equation}
By classical perturbation theory, this bound remains true (with
possibly a larger constant $C$) for the perturbed eigenvalues
$\lambda_n^\pm(\kappa)$ when $n \neq \pm 1$ and $a > 0$ is small enough.
We thus obtain the third estimate in (\ref{expansion-lambda-bounds}).

To control the critical bands corresponding to $n = \pm 1$, we
concentrate on the operator $P_-(c,\kappa)$ (the argument for
$P_+(c,\kappa)$ being similar, see below), and for simplicity we
denote its eigenvalues by $\lambda_n(\kappa)$ instead of
$\lambda_n^{-}(\kappa)$. The same perturbation argument as before
shows that $\lambda_{\pm 1}(\kappa)$ is bounded away from zero if
$|\kappa| \ge \kappa_0$ and $a$ is sufficiently small, where $\kappa_0
> 0$ is an arbitrary positive number. On the other hand, for small
values of $a$, $|c-2|$, and $|\kappa|$, solutions to the spectral
problem (\ref{Floquet-spectrum}) for $P_-(c,\kappa)$ are obtained by
the Lyapunov--Schmidt decomposition
$$
w(z,\kappa) = b_1(\kappa) e^{iz} + b_{-1}(\kappa) e^{-iz} +
\tilde{w}(z,\kappa), \qquad \langle e^{\pm i \cdot},
\tilde{w}(\cdot,\kappa) \rangle_{L^2_{\rm per}} = 0,
$$
where all terms can be determined by projecting the spectral problem
(\ref{Floquet-spectrum}) onto the two-dimensional subspace ${\rm
Span}\{e^{i \cdot},e^{-i \cdot}\} \subset L^2_{\rm per}(0,2\pi)$ and
its orthogonal complement in $L^2_{\rm per}(0,2\pi)$. Using the bound
(\ref{resolvent}), one can prove that $\tilde{w}(\cdot,\kappa) =
\mathcal{O}_{ H^4_{\rm per}(0,2\pi)}(a^2)$, which allows us to find
$\lambda(\kappa)$ near $\lambda_{\pm 1}^{(0)}(\kappa)$ as a solution of
the matrix eigenvalue problem
\begin{eqnarray}
\nonumber
\left[ \begin{array}{cc} \lambda_{1}^{(0)}(\kappa) + a^2 g_{1,1}(\kappa)
+ \mathcal{O}(a^4) & a^2 g_{1,-1}(\kappa) + \mathcal{O}(a^4) \\
a^2 g_{-1,1}(\kappa) + \mathcal{O}(a^4) & \lambda_{-1}^{(0)}(\kappa)
+ a^2 g_{-1,-1}(\kappa) + \mathcal{O}(a^4) \end{array} \right]
\left[ \begin{array}{c} b_1 \\ b_{-1} \end{array} \right] \\
\label{perturbed-matrix}
= \lambda(\kappa) \left[ \begin{array}{c} b_1 \\ b_{-1} \end{array} \right],
\end{eqnarray}
where
\begin{align*}
g_{\pm 1,\pm 1}(\kappa) \,&=\, -\frac{3}{2}(\kappa\pm 1)^4 + \frac{3}{4} c
(\kappa \pm 1)^2 + \frac{1}{2} (1-c) + \frac{3}{2} (\kappa \pm 1)^2, \\
g_{\pm 1,\mp 1}(\kappa) \,&=\, \frac{1}{4} (1-c) + \frac{3}{4} (\kappa^2-1).
\end{align*}
Setting $c = 2 + \gamma$ with small $|\gamma|$, we have
\begin{align*}
\lambda_{\pm 1}^{(0)}(\kappa) \,&=\, \mp 2 \gamma \kappa + (4-\gamma) \kappa^2 \pm
4 \kappa^3 + \kappa^4, \\
g_{\pm 1,\pm 1}(\kappa) \,&=\, 1 + \frac{1}{4} \gamma \pm \frac{3}{2}
\gamma \kappa - 6 \kappa^2 + \frac{3}{4} \gamma \kappa^2 \mp 6 \kappa^3
- \frac{3}{2} \kappa^4, \\ g_{\pm 1,\mp 1}(\kappa) \,&=\, -1 -
\frac{1}{4} \gamma + \frac{3}{4} \kappa^2.
\end{align*}
If we denote by $A$ the matrix in the left-hand side of
(\ref{perturbed-matrix}), we thus obtain the expansions
\begin{align*}
\half {\rm tr}(A) \,&=\, a^2 + 4 \kappa^2 + \mathcal{O}((a^2+\kappa^2)
(|\gamma| + a^2 + \kappa^2)), \\
{\rm det}(A) \,&=\, (a^2 + 4 \kappa^2)^2 - a^4 - 4 \gamma^2 \kappa^2 +
\mathcal{O}((a^2+\kappa^2)^2(|\gamma| + a^2 + \kappa^2)).
\end{align*}
As a result, the eigenvalues $\lambda_{\pm 1}(\kappa)$ of $A$ satisfy
\begin{align}
\nonumber
\lambda_{\pm 1}(\kappa) \,&=\, a^2 + 4 \kappa^2 + \mathcal{O}((a^2+\kappa^2)
(|\gamma|+a^2+\kappa^2)) \\ \label{expansion-lambda}
&\quad\, \pm \sqrt{a^4 + 4 \gamma^2 \kappa^2 +
\mathcal{O}((a^2+\kappa^2)^2(|\gamma|+a^2+\kappa^2))}.
\end{align}
It remains to analyze (\ref{expansion-lambda}). If $a > 0$ is small,
we obviously have
$$
\lambda_1(\kappa) \geq a^2 + 4 \kappa^2 + \mathcal{O}((a^2+\kappa^2)
(|\gamma| + a^2 + \kappa^2)) > 0,
$$
which implies the second bound in (\ref{expansion-lambda-bounds}).  To
estimate $\lambda_{-1}(\kappa)$, we first consider the regime where $|\kappa|
\le a$. If $|\gamma| \leq \nu a$ for any $\nu > 0$ independently of
$a$, further expansion of (\ref{expansion-lambda}) yields
\begin{equation}
\label{lamminus}
\lambda_{-1}(\kappa) = \mu + 4 \kappa^2 - \frac{2 \gamma^2\kappa^2}{a^2} +
\mathcal{O}(\kappa^2(|\gamma| + a^2)),
\end{equation}
where $\mu = \mathcal{O}(a^2 (|\gamma| + a^2))$ does not depend
on $\kappa$. But since $K_-(c) u_0 = 0$ for any $c$, we must have
$\lambda_{-1}(0) = 0$ to all orders in $a$ and $\gamma$,
hence actually $\mu = 0$. Then (\ref{lamminus}) shows that
$\lambda_{-1}(\kappa)$ has a nondegenerate minimum at $\kappa = 0$ if and
only if
\begin{equation}
\label{gamcond}
\gamma^2 < 2 a^2 + \mathcal{O}(a^3).
\end{equation}
Since $\gamma = c - 2$, this yields expansion (\ref{expansion-c}) for
$c_\pm$. From now on, we assume that $|\gamma| \le \nu a$ for some
$\nu \in (0,\sqrt{2})$, so that the inequality (\ref{gamcond})
certainly holds if $a$ is sufficiently small. The expansion
(\ref{lamminus}) shows that if $|\kappa| \leq a$, then
$$
\lambda_{-1}(\kappa) = (4-2\nu^2)\kappa^2 + \mathcal{O}(\kappa^2
(|\gamma| + a^2)).
$$
On the other hand, if $|\kappa| \ge a$, we easily find from
(\ref{expansion-lambda}) that
$$
\lambda_{-1}(\kappa) \ge 4\kappa^2 - 2|\gamma| |\kappa| + \mathcal{O}
(\kappa^2(|\gamma| + \kappa^2)^{1/2}) \ge \kappa^2 + \mathcal{O}
(\kappa^2(|\gamma| + \kappa^2)^{1/2}),
$$
because $2|\gamma| |\kappa| \le \kappa^2 + \gamma^2 \le \kappa^2 + \nu^2 a^2
\le 3\kappa^2$. Altogether, we obtain the first estimate in
(\ref{expansion-lambda-bounds}).

The spectral problem (\ref{Floquet-spectrum}) for the operator
$P_+(c,\kappa)$ can be studied in a similar way and results in
the matrix eigenvalue problem (\ref{perturbed-matrix}) with
\begin{align*}
g_{\pm 1,\pm 1}(\kappa) \,&=\, -\frac{3}{2}(\kappa\pm 1)^4 + \frac{3}{4}
c (\kappa \pm 1)^2 + \frac{3}{2} (1-c) + \frac{5}{2} (\kappa \pm 1)^2, \\
g_{\pm 1,\mp 1}(\kappa) \,&=\, \frac{3}{4} (5-c) + \frac{5}{4} (\kappa^2-1).
\end{align*}
Although the matrix $A$ has now different entries, the leading
order terms for the quantities ${\rm tr}(A)$ and ${\rm det}(A)$
are unchanged, hence the eigenvalues $\lambda_{\pm 1}(\kappa)$ still
satisfy (\ref{expansion-lambda}). Consequently, the conclusion
remains true for $c$ in the same interval (\ref{expansion-c}).
\end{proof}

\begin{remark}
In view of expansion (\ref{proposition-expansion}),
Proposition~\ref{prop-per} is a direct consequence of
Proposition~\ref{proposition-eigenvalues}.
\end{remark}

\section{Necessary condition for positivity of $\Lambda_c''(u_0)$}
\label{sec-positivity}

This section presents the proof of Proposition \ref{prop-explicit}.

In Section \ref{sec:small}, we only considered small amplitude periodic waves
(\ref{sn-periodic}) with $\mathcal{E}$ close to $1$. To get some
information on the quadratic form $\Lambda_c''(u_0)$ for larger
periodic waves, we recall that, for any $\mathcal{E} \in (0,1)$ and
any $c \in \mathbb{R}$, the operators $P_{\pm}(c,\kappa)$ have at
least one Floquet--Bloch spectral band that touches the origin at
$\kappa = 0$, because we know from (\ref{zero-eigenfunctions}) that
the kernel of $P_{\pm}(c,0)$ in $L^2_{\rm per}(0,2\pi)$ is nontrivial.

In what follows, we focus on the operator $P_-(c,\kappa)$. Assuming that
$\mathrm{ker}(P_-(c,0))$ in $L^2_{\rm per}(0,2\pi)$ is
one-dimensional, we compute an asymptotic expansion as $\kappa \to 0$
of the unique Floquet--Bloch band that touches the origin at $\kappa =
0$. By Proposition \ref{proposition-eigenvalues}, the assumption on
$\mathrm{ker}(P_-(c,0))$ is satisfied at least for the periodic waves of small
amplitude, in which case the Floquet--Bloch band that touches
the origin is actually the lowest band $\lambda^-_{-1}(\kappa)$.

\begin{proposition}
\label{proposition-lowest-band}
Fix $\mathcal{E} \in (0,1)$ and assume that $U= u_0(\ell^{-1} \cdot)$
is the only $2\pi$-periodic solution of the homogeneous equation
$P_-(c,0) w = 0$ for some $c \in \mathbb{R}$. Denote by $\mu(\kappa)$
the Floquet--Bloch band of $P_-(c,\kappa)$ that touches zero at $\kappa = 0$.
Then $\mu$ is $C^2$ near $\kappa = 0$, $\mu(0) = \mu'(0) = 0$, and
\begin{equation}
\label{expansion-lowest-band}
\mu''(0) = \frac{2}{\| U \|_{L^2_{\rm per}}^2} \left[-4 \ell^4 (c{-}2)^2
\langle U', (P_-(c,0))^{-1} U' \rangle_{L^2_{\rm per}} + 3 \ell^4
\| U' \|_{L^2_{\rm per}}^2 + (3{-}c) \ell^2 \| U \|^2_{L^2_{\rm per}} \right],
\end{equation}
where $W = (P_-(c,0))^{-1} U'$ is uniquely defined under the orthogonality
condition $\langle U, W \rangle_{L^2_{\rm per}} = 0$.
\end{proposition}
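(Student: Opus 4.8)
The plan is to compute the asymptotic expansion of the critical Floquet--Bloch band $\mu(\kappa)$ near $\kappa = 0$ using a regular perturbation argument in the Floquet parameter $\kappa$. Since $P_-(c,0) U = 0$ with $U$ spanning a one-dimensional kernel by hypothesis, the eigenvalue $\mu(0) = 0$ is simple, so standard analytic perturbation theory guarantees that $\mu(\kappa)$ and its associated eigenfunction $w(\cdot,\kappa)$ are analytic (in particular $C^2$) in $\kappa$ near the origin. I would expand
\begin{equation*}
w(z,\kappa) = U(z) + \kappa\, w_1(z) + \kappa^2\, w_2(z) + \mathcal{O}(\kappa^3), \qquad
\mu(\kappa) = \kappa\, \mu_1 + \kappa^2\, \mu_2 + \mathcal{O}(\kappa^3),
\end{equation*}
and insert these into the eigenvalue equation $P_-(c,\kappa) w = \mu(\kappa) w$. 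The key is that $P_-(c,\kappa)$ itself is polynomial in $\kappa$: writing $\partial_z + i\kappa$ for each derivative, we have $P_-(c,\kappa) = P_-(c,0) + i\kappa\, P^{(1)} + (i\kappa)^2 P^{(2)} + \dots$, where $P^{(1)}$, $P^{(2)}$ are explicit lower-order differential operators obtained by differentiating $P_-(c,\kappa)$ in $\kappa$ at $\kappa = 0$.

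The first step is to collect powers of $\kappa$. At order $\kappa^0$ we recover $P_-(c,0) U = 0$, which holds by assumption. At order $\kappa^1$, the solvability condition (projecting onto $U$, using selfadjointness of $P_-(c,0)$ and $U \in \ker P_-(c,0)$) forces $\mu_1 = 0$, provided $\langle U, P^{(1)} U\rangle_{L^2_{\rm per}} = 0$; I expect this inner product to vanish by a parity/reality argument, since $U$ is odd and real while $P^{(1)}$, being the first $\kappa$-derivative, carries an odd number of $z$-derivatives together with the factor $i$. This gives $\mu(0) = \mu'(0) = 0$ as claimed. One then solves the order-$\kappa^1$ equation $P_-(c,0)\, w_1 = -i P^{(1)} U$ for $w_1$, which is solvable precisely because of the solvability condition just verified, and $w_1$ is determined uniquely up to a multiple of $U$ that I fix by an orthogonality normalization. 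The crucial identification here is that $w_1$ will be expressible through the resolvent $(P_-(c,0))^{-1}$ applied to a combination involving $U'$; indeed $P^{(1)} U$ should reduce, after using the profile equation $(\ref{wave-problem})$, to a term proportional to $\ell^2(c-2)\, U'$, which is exactly where the factor $\ell^4(c-2)^2\langle U', (P_-(c,0))^{-1} U'\rangle$ in $(\ref{expansion-lowest-band})$ originates.

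The second and decisive step is the order-$\kappa^2$ solvability condition, which yields $\mu_2 = \tfrac12 \mu''(0)$. Projecting the $\kappa^2$-equation onto $U$ and dividing by $\|U\|_{L^2_{\rm per}}^2$ produces $\mu''(0)$ as a sum of three contributions: a term $\langle U, P^{(2)} U\rangle$ giving the explicit $3\ell^4\|U'\|^2 + (3-c)\ell^2\|U\|^2$ pieces (after integrating by parts and simplifying with the profile equation), and a cross term $\langle U, P^{(1)} w_1\rangle$ that, upon substituting the resolvent expression for $w_1$, produces the $-4\ell^4(c-2)^2\langle U',(P_-(c,0))^{-1}U'\rangle$ contribution. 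The main obstacle, and the step requiring genuine care, is this algebraic reduction: one must repeatedly exploit the profile equation $(\ref{wave-problem})$ and the relation $L_- u_0 = 0$ (equivalently $P_-(c,0)U = 0$) to convert raw expressions in $U$, $U'$, $U''$ and their elliptic-function structure into the compact form $(\ref{expansion-lowest-band})$, and to verify that the definition $W = (P_-(c,0))^{-1}U'$ with $\langle U, W\rangle_{L^2_{\rm per}} = 0$ is consistent, i.e.\ that $U'$ lies in the range of $P_-(c,0)$ (orthogonal to the kernel $U$), which again follows from the parity of $U$. Keeping track of the factors of $i$, the powers of $\ell$, and the correct coefficients through these integrations by parts is where all the real work lies; the perturbation-theoretic skeleton itself is routine.
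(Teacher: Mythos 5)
Your plan is correct and follows essentially the same route as the paper: a Rayleigh--Schr\"odinger expansion of $P_-(c,\kappa)=P_0(c)+i\kappa P_1(c)-\kappa^2 P_2(c)+\mathcal{O}(\kappa^3)$ around the simple zero eigenvalue, with $\mu'(0)=0$ from the order-$\kappa$ solvability condition (the paper gets $\langle U,P_1(c)U\rangle_{L^2_{\rm per}}=0$ from skew-adjointness of $P_1(c)$ and reality of $U$, equivalent to your parity/reality argument), and $\mu''(0)$ from the order-$\kappa^2$ projection splitting into $\langle U,P_2(c)U\rangle$ and the cross term $\langle U,P_1(c)w_1\rangle$ with $w_1=-2\ell^2(c-2)W$. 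Your identifications $P_1(c)U=2\ell^2(c-2)U'$ via the profile equation and of where each term in (\ref{expansion-lowest-band}) originates match the paper's computation exactly, so only the routine algebra remains to be written out.
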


\begin{proof}
We consider $P_-(c,\kappa)$ as a self-adjoint operator in $L^2_{\rm per}
(0,2\pi)$ with domain $H^4_{\rm per}(0,2\pi)$. As $\kappa \to 0$, we have
$$
P_-(c,\kappa) = P_0(c) + i \kappa P_1(c) - \kappa^2 P_2(c) +
\mathcal{O}_{H^4_{\rm per}(0,2\pi) \to L^2_{\rm per}(0,2\pi)}(\kappa^3),
$$
where
\begin{align*}
P_0(c) \,&=\, \ell^4 \partial_z^4 - 3 \ell^2 \partial_z U^2 \partial_z
  + c \ell^2 \partial_z^2 + (c-1) (1 - U^2), \\
P_1(c) \,&=\, 4 \ell^4 \partial_z^3 - 6 \ell^2 U^2 \partial_z
  - 6 \ell^2 U U' + 2 c \ell^2 \partial_z, \\
P_2(c) \,&=\, 6 \ell^4 \partial_z^4 - 3 \ell^2 U^2  + c \ell^2.
\end{align*}
We note that $P_0(c)$ and $P_2(c)$ are self-adjoint, whereas $P_1(c)$
is skew-adjoint. Under the assumptions of the proposition, the
Floquet--Bloch band $\mu(\kappa)$ that touches zero at $\kappa = 0$ is
separated from all the other bands of $P_-(c,\kappa)$ locally near
$\kappa = 0$. Thus, $\mu(\kappa)$ is smooth near $\kappa = 0$, and it
is possible to choose a nontrivial solution $w(z,\kappa)$ of the
eigenvalue equation $P_-(c,\kappa) w(z,\kappa) = \mu(\kappa)
w(z,\kappa)$ which also depends smoothly on $\kappa$. We look for an
expansion of the form
\begin{equation*}
\mu(\kappa) = i \kappa \mu_1 - \kappa^2 \mu_2 + \mathcal{O}(\kappa^3)
\end{equation*}
and
\begin{equation*}
w(z,\kappa) = U(z) + i\kappa w_1(z) - \kappa^2 w_2(z) +
\mathcal{O}_{H^4_{\rm per}(0,2\pi)}(\kappa^3),
\end{equation*}
where $w_1$, $w_2$, and the remainder term belong to the orthogonal
complement of ${\rm span}\{U\}$ in $L^2_{\rm per}(0,2\pi)$. This gives
the following system for the correction terms
\begin{align}
\label{first-eq} P_0(c) w_1 + P_1(c) U \,&=\, \mu_1 U, \\
\label{second-eq} P_0(c) w_2 + P_1(c) w_1 + P_2(c) U \,&=\, \mu_1 w_1
+ \mu_2 U.
\end{align}
If we take the scalar product of (\ref{first-eq}) with $U$ in
$L^2_{\rm per}(0,2\pi)$ and use the fact that $P_0(c)$ is self-adjoint,
$P_1(c)$ is skew-adjoint, and $P_0(c)U = 0$, we obtain $\mu_1 = 0$.
Similarly, taking the scalar product of (\ref{second-eq}) with $U$
gives a nontrivial equation for $\mu_2$:
$$
\mu_2 \| U \|_{L^2_{\rm per}}^2 = \langle U, P_1(c) w_1 \rangle_{L^2_{\rm per}}
+ \langle U, P_2(c) U \rangle_{L^2_{\rm per}}.
$$
We note that
\begin{align*}
P_1(c) U \,&=\, 2 \ell^2 (2 \ell^2 U''' - 6 U^2 U' + c U') =
2 \ell^2 (c-2) U', \\
P_2(c) U \,&=\, \ell^2 (6 \ell^2 U'' - 3 U^3 + c U) = \ell^2
(3 \ell^2 U'' + (c-3) U).
\end{align*}
Setting $w_1 = -2 \ell^2 (c-2) W$, where $W$ is the unique solution of
$P_0(c) W = U'$ subject to the orthogonality condition $\langle U,
W \rangle_{L^2_{\rm per}} = 0$, we obtain
$$
\mu_2 \| U \|_{L^2_{\rm per}}^2 = 4 \ell^4 (c-2)^2 \langle U', W
\rangle_{L^2_{\rm per}} - \left( 3 \ell^4 \| U' \|_{L^2_{\rm per}}^2
+ (3-c) \ell^2 \| U \|^2_{L^2_{\rm per}} \right).
$$
which yields the result (\ref{expansion-lowest-band}) since
$\mu''(0) = -2 \mu_2$.
\end{proof}

Note that the first term in the right-hand side of
(\ref{expansion-lowest-band}) is negative, whereas the other two are
positive for $c \leq 3$. In the particular case where $c = 2$, it follows from
Lemma~\ref{lemma-K-minus} below that $\mathrm{ker}(P_-(2,0)) =
\mathrm{span}\{U\}$ for any value of the parameter
$\mathcal{E} \in (0,1)$, so that the assumption of
Proposition~\ref{proposition-lowest-band} is satisfied.
In this case, the formula (\ref{expansion-lowest-band}) shows that $\mu''(0) > 0$.

Next, we give an explicit expression for $\mu''(0)$ by evaluating the various
terms in (\ref{expansion-lowest-band}) using known properties
of the Jacobi elliptic functions. These computations are performed
in Appendix~A, see equations (\ref{expansion-lowest-band-appendix})--(\ref{norm3-app}),
and yield the explicit formula
\begin{equation}
\label{expression-mu}
\mu''(0) = \frac{2 \ell^2 k^2 (4 k^2 - (c-2)^2 (1+k^2)^2)}{(1+k^2)
\left( 1 - \frac{E(k)}{K(k)}\right) \left(2 k^2 + (c-2) (1+k^2) \left( 1 -  \frac{E(k)}{K(k)}
\right)\right)}\,,
\end{equation}
where $K(k)$ and $E(k)$ are the complete elliptic integrals of the
first and second kind, respectively, and the parameter $k \in (0,1)$
is given by (\ref{exact-c-plus-minus}). The denominator in
(\ref{expression-mu}) is strictly positive if $c \ge 1$. Indeed, since $K(k) > E(k)$ for all $k
\in (0,1)$, thanks to equation (\ref{norm1-app}) in Appendix A, the
denominator in (\ref{expression-mu}) is a strictly increasing function
of $c$, and for $c = 1$ we have
$$
2 k^2 + (c-2)(1+k^2) \left( 1 -  \frac{E(k)}{K(k)} \right) \biggr|_{c = 1}
= k^2-1 + (k^2+1) \frac{E(k)}{K(k)}\,> 0.
$$
The expression above is positive for all $k \in (0,1)$, thanks to
equation (\ref{norm2-app}) in Appendix A.  Thus, for $c \ge 1$, the
sign of $\mu''(0)$ is the sign of the numerator in
(\ref{expression-mu}). It follows that $\mu''(0) \geq 0$ if $c \in
[c_-,c_+] \subset [1,3]$, where $c_{\pm}$ are given by
(\ref{exact-c-plus-minus}). Similarly, we have $\mu''(0) < 0$ if $c
\ge 1$ and $c \notin [c_-,c_+]$.

\begin{remark}
The computations above imply the conclusion of Proposition
\ref{prop-explicit}.  Indeed, either the kernel of $P_-(c,0)$ in
$L^2_{\rm per}(0,2\pi)$ is one-dimensional, in which case the
perturbation argument of Proposition \ref{proposition-lowest-band}
applies and proves the existence of negative spectrum if $c \ge 1$ is
outside $[c_-,c_+]$, or the kernel is higher-dimensional and the
second variation $\Lambda_c''(u_0)$ has more neutral directions than
the two directions due to the symmetries. Note that we do not claim
that the second variation $\Lambda_c''(u_0)$ (or even the
quadratic form associated with $K_-(c)$) is indeed positive if $c \in
(c_-,c_+)$, although by Proposition \ref{proposition-eigenvalues} this
is definitely the case for the periodic waves of small amplitudes.
\end{remark}

\begin{remark}
If we compare the above results with the computations in \cite{Decon},
one advantage of our approach is that we clearly distinguish between
the spectra of the two linear operators $K_+(c)$ and $K_-(c)$. In
particular, the necessary condition in Proposition \ref{prop-explicit}
is derived from the positivity of the Floquet--Bloch spectrum
of $K_-(c)$. We expect that, for any $\mathcal{E} \in (0,1)$,
the Floquet--Bloch spectrum of $K_+(c)$ is positive for $c$ in a larger subset
of $\mathbb{R}$ than $(c_-,c_+)$. For instance, the operator
$K_+(c)$ is positive in $L^2(\mathbb{R})$ for every $c \leq 3$
in the case of the black soliton that corresponds to
$\mathcal{E} = 0$, see Remark \ref{remark-black-plus} below.
\end{remark}

\section{Positive representations of $\Lambda_c''(u_0)$}
\label{sec:positive}

As a first step in the proof of Proposition \ref{prop-positivity},
which claims that the quadratic forms associated with the linear
operators $K_{\pm}(c)$ are nonnegative on $H^2(\R)$ if $c=2$, we
look for representations of these quadratic forms as sums of squared quantities.

Our first result shows
that, if $c = 2$, the quadratic form associated with $K_-(c)$ is
always positive, for all $\mathcal{E} \in [0,1]$, including the black
soliton for $\mathcal{E} = 0$ and the zero solution for
$\mathcal{E} = 1$.

\begin{lemma}
\label{lemma-K-minus}
Fix $c = 2$. For any $\mathcal{E} \in [0,1]$ and any $v \in
H^2(\R)$, we have
\begin{equation}
\label{operator-K-minus}
\langle K_-(2) v, v \rangle_{L^2} = \| L_- v \|_{L^2}^2 +
\| u_0 v_x - u_0' v \|_{L^2}^2.
\end{equation}
\end{lemma}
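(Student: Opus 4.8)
The plan is to verify the claimed identity (\ref{operator-K-minus}) directly, by expanding its right-hand side and reducing it, via integration by parts and the profile equation, to the known expression for the quadratic form $\langle K_-(2)v,v\rangle_{L^2}$. First I would record the left-hand side: since $K_-(2) = M_- - 2L_-$, combining the $v$-contributions of the second variations (\ref{secondE}) and (\ref{secondS}) gives
\begin{equation*}
\langle K_-(2)v,v\rangle_{L^2} \,=\, \int_\R \bigl[\, v_{xx}^2 + (3u_0^2 - 2)v_x^2 - (u_0^2-1)v^2 \,\bigr]\dd x .
\end{equation*}
So it suffices to show that expanding $\|L_- v\|_{L^2}^2 + \|u_0 v_x - u_0' v\|_{L^2}^2$ reproduces this integral.

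Next I would expand the two squared norms. Writing $L_- v = -v_{xx} + (u_0^2-1)v$, the purely diagonal contributions $\int_\R v_{xx}^2$, $\int_\R (u_0^2-1)^2 v^2$, $\int_\R u_0^2 v_x^2$ and $\int_\R (u_0')^2 v^2$ are immediate, and all that remains are the two cross terms. Integrating by parts, the term $-2\int_\R v_{xx}(u_0^2-1)v\dd x$ becomes $2\int_\R (u_0^2-1)v_x^2\dd x + 4\int_\R u_0 u_0'\, v\,v_x\dd x$, which, combined with the other cross term $-2\int_\R u_0 u_0'\, v_x v\dd x$, leaves the single first-order quantity $2\int_\R u_0 u_0'\, v\, v_x\dd x$. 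Using $v v_x = \half(v^2)_x$ and one further integration by parts, this equals $-\int_\R \bigl[(u_0')^2 + u_0 u_0''\bigr]v^2\dd x$. All boundary terms vanish, either because $v$ and $v_x$ decay at infinity for $v\in H^2(\R)$, or by periodicity when $\I=(0,T)$.

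The crucial step is then to invoke the profile equation (\ref{wave}) in the form $u_0'' = (u_0^2-1)u_0$, which replaces $u_0 u_0''$ by $(u_0^2-1)u_0^2$. After this substitution the coefficient of $v_x^2$ collects to $2(u_0^2-1) + u_0^2 = 3u_0^2 - 2$, while the coefficient of $v^2$ collapses: the two $(u_0')^2$ contributions cancel, leaving $(u_0^2-1)^2 - u_0 u_0'' = (u_0^2-1)^2 - (u_0^2-1)u_0^2 = -(u_0^2-1)$. Together with the $v_{xx}^2$ term this is exactly the integral displayed above, establishing (\ref{operator-K-minus}).

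Since the decomposition is handed to us in the statement, the argument is essentially a verification, and I expect no genuine obstacle beyond careful bookkeeping of the cross terms; the only non-routine ingredient is recognizing that the profile equation must be used to eliminate $u_0 u_0''$, without which the $v^2$ coefficients do not reduce. Conceptually, the identity is equivalent to the operator factorization $K_-(2) = L_-^2 + A^\ast A$ with $A = u_0\partial_x - u_0'$, which makes nonnegativity for $c=2$ and all $\mathcal{E}\in[0,1]$ manifest; one could establish the lemma equally well by checking this operator identity, but the underlying computation is the same.
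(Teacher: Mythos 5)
Your proof is correct and follows essentially the same route as the paper's: expand both squared norms, integrate the cross terms by parts, and use the profile equation $u_0'' = (u_0^2-1)u_0$ to reduce $(1-u_0^2)^2 - u_0 u_0''$ to $1-u_0^2$. Your closing remark about the factorization $K_-(2) = L_-^2 + A^* A$ with $A = u_0\partial_x - u_0'$ is a nice conceptual gloss, but computationally identical to what the paper does.
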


\begin{proof}
Using the definition (\ref{operatorsdef}) of the operator $L_-$
and integrating by parts, we obtain
\begin{align*}
\| L_- v \|_{L^2}^2 \,&=\, \int_{\R} \left[ v_{xx}^2 + 2 (1 - u_0^2) v v_{xx}
+ (1-u_0^2)^2 v^2 \right] \dd x \\ \,&=\, \int_{\R} \left[ v_{xx}^2
- 2(1 - u_0^2) v_x^2 - 2 (u_0 u_0')' v^2 + (1-u_0^2)^2 v^2 \right] \dd x.
\end{align*}
Similarly, we obtain
$$
 \| u_0 v_x - u_0' v \|_{L^2}^2 = \int_{\R} \left[ u_0^2 v_x^2 +
(u_0 u_0')' v^2 + (u_0')^2 v^2 \right] \dd x.
$$
As a consequence, we have
$$
\| L_- v \|_{L^2}^2 + \| u_0 v_x - u_0' v \|_{L^2}^2 = \int_{\R}
\left[ v_{xx}^2 + (3 u_0^2 - 2) v_x^2 + [(1-u_0^2)^2 - u_0 u_0''] v^2
\right] \dd x,
$$
which yields the desired result since $(1-u_0^2)^2 - u_0 u_0''
= 1 - u_0^2$.
\end{proof}

\begin{remark}
It is easy to verify that the right-hand side of the representation
(\ref{operator-K-minus}) vanishes if and only if $v = C u_0$ for some
constant $C$. As $u_0 \notin H^2(\R)$, this shows that $\langle
K_-(2) v, v \rangle_{L^2} > 0$ for any nonzero $v \in H^2(\R)$.
\end{remark}

Unfortunately, we are not able to find a positive representation
for the quadratic form associated with the operator $K_+(c)$.
If we proceed as in the proof of Lemma~\ref{lemma-K-minus}, we obtain
\begin{equation}
\label{operator-K-plus}
\langle K_+(2) u, u \rangle_{L^2} = \| L_+ u \|_{L^2}^2 - \int_{\R}
\left[ u_0^2 u_x^2 - 3 u_0^2 u^2 + 5 u_0^4 u^2 \right] \dd x.
\end{equation}
Here the second term in the right-hand side has no definite sign,
hence it is difficult to exploit the representation
(\ref{operator-K-plus}). In the following lemma, we give a partial
result which shows that the quadratic form associated with $K_+(c)$ is
positive for $c < 3$ at least on a subspace of $H^2(\R)$.

\begin{lemma}
\label{lemma-K-plus}
For any $\mathcal{E} \in (0,1)$, any $c \in \R$, and any $u\in H^2(\R)$
such that $u(x) = 0$ whenever $u_0'(x) = 0$, we have
\begin{equation}
\label{operator-K-plus-partial}
\langle K_+(c) u, u \rangle_{L^2} = \| w_x \|_{L^2}^2 + (3-c)
\| w \|_{L^2}^2 + 2 \mathcal{E}^2 \left\| \frac{u_0 w}{u_0'}
\right\|_{L^2}^2,
\end{equation}
where $w = u_x - \frac{u_0''}{u_0'} u \in H^1(\R)$ satisfies
$\frac{w}{u_0'} \in L^2(\R)$.
\end{lemma}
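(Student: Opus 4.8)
The plan is to exploit the fact that $u_0'$ lies in the kernel of both $L_+$ and $M_+$, which points to a Darboux-type factorization built on the first-order operator
\[
A := \partial_x - \frac{u_0''}{u_0'} = u_0'\,\partial_x\,(u_0')^{-1},
\]
so that $A u_0' = 0$ and $w = A u$. The hypothesis that $u$ vanishes at every zero of $u_0'$ is precisely what guarantees that the singular coefficient $u_0''/u_0'$ multiplies a quantity vanishing to first order there, so that $w \in H^1(\R)$ has a removable singularity and $w/u_0' \in L^2(\R)$. I would record these regularity facts first, since they are what make the subsequent integrations by parts across the zeros of $u_0'$ legitimate (the relevant boundary contributions, such as $(u_0''/u_0')\,u^2$, vanish at those points because $u$ does).

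The second-order piece is the classical ground-state substitution. Using $L_+ u_0' = 0$, i.e. $u_0''' = (3u_0^2-1)u_0'$, one checks $L_+ = A^\ast A$, and a single integration by parts gives
\[
\langle L_+ u,u\rangle_{L^2} = \|A u\|_{L^2}^2 = \|w\|_{L^2}^2 .
\]
This already disposes of the $-c L_+$ part of $K_+(c)$ and explains the coefficient $(3-c)$: subtracting $c\langle L_+u,u\rangle_{L^2}$ from the as-yet-unproven identity for $M_+$ shifts the pure $\|w\|^2$ coefficient from $3$ to $3-c$, while leaving the $\|w_x\|^2$ and $2\mathcal{E}^2\|u_0 w/u_0'\|^2$ terms untouched.

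The core of the proof is therefore the $c$-independent identity
\[
\langle M_+ u,u\rangle_{L^2} = \|w_x\|_{L^2}^2 + 3\|w\|_{L^2}^2 + 2\mathcal{E}^2\Bigl\|\tfrac{u_0 w}{u_0'}\Bigr\|_{L^2}^2,
\]
which amounts to the factorization $M_+ = A^\ast\bigl(-\partial_x^2 + 3 + 2\mathcal{E}^2 u_0^2/(u_0')^2\bigr)A$ understood as quadratic forms on the admissible class. To prove it I would substitute $u_x = w + (u_0''/u_0')u$ and $u_{xx} = w_x + (u_0''/u_0')w + (3u_0^2-1)u$ into the explicit form $\langle M_+u,u\rangle_{L^2} = \int_\R[u_{xx}^2 + 5u_0^2 u_x^2 + (-5u_0^4+15u_0^2-4+3\mathcal{E}^2)u^2]\,\dd x$, expand, and integrate by parts to eliminate every term still carrying a bare $u$ or $u_x$. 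Along the way I would reduce coefficients using $u_0'' = (u_0^2-1)u_0$ together with the first integral $2(u_0')^2 = (1-u_0^2)^2 - \mathcal{E}^2$; it is this last relation that converts the polynomial potential of $M_+$ into the single residual term $2\mathcal{E}^2 u_0^2/(u_0')^2$. Combining the two identities then yields the claimed expression for $\langle K_+(c)u,u\rangle_{L^2}$.

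The main obstacle is the bookkeeping in this last step: the fourth-order expansion produces many cross terms, and the cancellations that collapse them into exactly the three advertised squares — in particular producing the clean constants $3$ and $2\mathcal{E}^2$ — are the delicate part. A secondary but genuine difficulty is justifying the integrations by parts across the isolated zeros of $u_0'$, where several intermediate expressions are individually singular; the hypothesis $u=0$ at those zeros is exactly what renders each integral convergent and kills the boundary terms, so I would organize the argument interval-by-interval between consecutive zeros and verify that the boundary contributions telescope to zero.
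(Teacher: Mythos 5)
Your proposal is correct and takes essentially the same approach as the paper: the same substitution $w = u_x - \frac{u_0''}{u_0'}u$ (with regularity across the simple zeros of $u_0'$ handled just as in the paper, via $u = u_0'\tilde{u}$ and a Hardy-type bound), the identity $\langle L_+ u, u\rangle_{L^2} = \|w\|_{L^2}^2$, and the same ODE relations $u_0'' = (u_0^2-1)u_0$ and $2(u_0')^2 = (1-u_0^2)^2 - \mathcal{E}^2$ to produce the residual potential $2\mathcal{E}^2 u_0^2/(u_0')^2$. The only difference is organizational bookkeeping: the paper routes the fourth-order part through $\|L_+ u\|_{L^2}^2 = \bigl\| w_x + \frac{u_0''}{u_0'} w \bigr\|_{L^2}^2$ combined with $\|u_0 w\|_{L^2}^2$, whereas you expand $\langle M_+ u, u\rangle_{L^2}$ directly; your $c$-independent intermediate identity is indeed correct, being equivalent to the lemma together with $\langle L_+ u, u\rangle_{L^2} = \|w\|_{L^2}^2$.
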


\begin{proof}
Since $u_0'$ satisfies the second-order differential equation
$L_+ u_0' = 0$, the zeros of $u_0'$ are all simple, as can also be
deduced from the explicit formula (\ref{sn-periodic}). Thus, if
$u \in H^2(\R)$ is such that $u(x) = 0$ whenever $u_0'(x) = 0$, we
can write $u = u_0'\tilde{u}$ and it follows from Hardy's inequality
that $\tilde{u} \in H^1(\R)$. With this notation, we have
$$
  w := u_x - \frac{u_0''}{u_0'} u = u_x - u_0''\tilde u =
  u_0'\tilde{u}_x,
$$
so that $w \in H^1(\R)$ and $\frac{w}{u_0'} \in L^2(\R)$. As a consequence,
all terms in right-hand side of (\ref{operator-K-plus-partial}) are
well-defined, and the integrations by parts used in the computations
below can easily be justified.

To prove the representation (\ref{operator-K-plus-partial}), we first
note that
$$
u_{xx} + (1 - 3 u_0^2) u = u_{xx} - \frac{u_0'''}{u_0'} u = w_x
+ \frac{u_0''}{u_0'} w.
$$
Integrating by parts, we thus obtain
$$
\| L_+ u \|_{L^2}^2 = \Bigl\| w_x + \frac{u_0''}{u_0'} w
\Bigr\|_{L^2}^2 = \| w_x \|_{L^2}^2 + \int_{\R} \Bigl[
(1 - 3 u_0^2) w^2 + \frac{2 (u_0'')^2}{(u_0')^2} w^2 \Bigr] \dd x.
$$
On the other hand, we have
$$
\| w \|_{L^2}^2 = \int_{\R} \left[ u_x^2 + (3 u_0^2 - 1) u^2 \right] \dd x,
$$
and
$$
\| u_0 w \|_{L^2}^2 = \int_{\R} \left[ u_0^2 u_x^2 + (5 u_0^2 - 3) u_0^2
u^2 \right] \dd x.
$$
Thus, using the analogue of (\ref{operator-K-plus}) for all $c \in
\mathbb{R}$, we find
\begin{align*}
\langle K_+(c) u, u \rangle_{L^2} \,&=\,  \| L_+ u \|_{L^2}^2
+ (2 - c) \int_{\R} \left[ u_x^2 - u^2 + 3 u_0^2 u^2 \right] \dd x
- \int_{\R} \left[ u_0^2 u_x^2 - 3 u_0^2 u^2 + 5 u_0^4 u^2 \right] \dd x \\
\,&=\, \| w_x \|_{L^2}^2 + (3 - c) \| w \|_{L^2}^2 + 2  \int_{\R}
\Bigl[ \frac{(u_0'')^2}{(u_0')^2} - 2 u_0^2 \Bigr] w^2 \dd x,
\end{align*}
which yields the desired result since $\frac{(u_0'')^2}{(u_0')^2}
- 2 u_0^2 = \mathcal{E}^2 \frac{u_0^2}{(u_0')^2}$ holds by equations
(\ref{wave}) and (\ref{first-order}).
\end{proof}

\begin{remark}
If $c \le 3$, the right-hand side of the representation
(\ref{operator-K-plus-partial}) is nonnegative and vanishes if and
only if $w = 0$, which is equivalent to $u = C u_0'$ for some constant
$C$. However, this does not imply positivity of the quadratic form
associated to $K_+(c)$, because the representation
(\ref{operator-K-plus-partial}) only holds for $u$ in a subspace of
$H^2(\R)$. As a matter of fact, the right-hand side of the
representation (\ref{operator-K-plus-partial}) is positive for any $c
\le 3$, whereas we know from the proof of
Proposition~\ref{proposition-eigenvalues} that, when $\mathcal{E}$ is
close to $1$, the operator $K_+(c)$ is positive if and only if $c \in
(c_-,c_+)$ where $c_{\pm} \to 2$ as $\mathcal{E} \to 1$.
\end{remark}

For the black soliton (\ref{black-soliton}) corresponding to the case
$\mathcal{E} = 0$, the proof of Lemma~\ref{lemma-K-plus} yields a much
stronger conclusion, because $u_0'$ never vanishes so that we do not
need to impose any restriction to $u \in H^2(\R)$. Using the identity
$u_0'' = -\sqrt{2}u_0 u_0'$ which holds for the black soliton
(\ref{black-soliton}) only, we obtain the following result.

\begin{corollary}
\label{corollary-K-plus}
Consider the black soliton (\ref{black-soliton}), for which
$\mathcal{E} = 0$. For any $c \in \R$ and any $u \in H^2(\R)$,
we have
\begin{equation}
\label{operator-K-plus-soliton}
\langle K_+(c) u, u \rangle_{L^2} = \| w_x \|_{L^2}^2 + (3-c)
\| w \|_{L^2}^2,
\end{equation}
where $w = u_x + \sqrt{2} u_0 u \in H^1(\R)$.
\end{corollary}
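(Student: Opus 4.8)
The plan is to specialize the computation of Lemma~\ref{lemma-K-plus} to the black soliton, where the argument simplifies in two ways. First I would record the elementary facts about $u_0(x) = \tanh(x/\sqrt{2})$: one has $u_0'(x) = \frac{1}{\sqrt{2}}\,\mathrm{sech}^2(x/\sqrt{2}) > 0$ for all $x \in \R$, and a direct differentiation gives $u_0'' = -\sqrt{2}\,u_0 u_0'$, hence $u_0''/u_0' = -\sqrt{2}\,u_0$. Consequently the function $w = u_x - (u_0''/u_0')\,u$ from Lemma~\ref{lemma-K-plus} is precisely $w = u_x + \sqrt{2}\,u_0 u$, as in the statement.

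The second simplification is that, since $u_0'$ never vanishes, the restriction ``$u(x) = 0$ whenever $u_0'(x) = 0$'' in Lemma~\ref{lemma-K-plus} becomes vacuous, so the representation should hold for every $u \in H^2(\R)$. I would first verify that $w$ is well defined in $H^1(\R)$: indeed $u_x \in H^1(\R)$, while $u_0$ and all its derivatives are bounded, so $u_0 u \in H^1(\R)$. The point is that the coefficients $u_0''/u_0' = -\sqrt{2}\,u_0$ and $(u_0''/u_0')' = -\sqrt{2}\,u_0'$ are now smooth and bounded, in contrast to the general case where $1/u_0'$ blows up at the zeros of $u_0'$ and Hardy's inequality was invoked. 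All integrations by parts below are therefore elementary, with vanishing boundary terms by the decay of $u$, $u_x$ and $u_0'$.

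With these preliminaries I would reproduce verbatim the algebra of Lemma~\ref{lemma-K-plus}. Using $L_+ u_0' = 0$ one still has $u_{xx} + (1 - 3u_0^2)u = w_x + (u_0''/u_0')\,w$, and integration by parts gives
\begin{equation*}
\| L_+ u \|_{L^2}^2 = \| w_x \|_{L^2}^2 + \int_\R \Bigl[ (1 - 3u_0^2) + \frac{2(u_0'')^2}{(u_0')^2} \Bigr] w^2 \dd x.
\end{equation*}
Combining this with $\| w \|_{L^2}^2 = \langle L_+ u, u\rangle_{L^2}$, with the identity $\langle K_+(2)u,u\rangle_{L^2} = \|L_+ u\|_{L^2}^2 - \|u_0 w\|_{L^2}^2$ coming from (\ref{operator-K-plus}), and with the decomposition $K_+(c) = K_+(2) - (c-2)L_+$, I would arrive at
\begin{equation*}
\langle K_+(c) u, u \rangle_{L^2} = \| w_x \|_{L^2}^2 + (3-c)\| w \|_{L^2}^2 + 2 \int_\R \Bigl[ \frac{(u_0'')^2}{(u_0')^2} - 2u_0^2 \Bigr] w^2 \dd x,
\end{equation*}
exactly as in the general lemma.

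The last step, which is where $\mathcal{E} = 0$ pays off, is to note that the black-soliton identity $u_0'' = -\sqrt{2}\,u_0 u_0'$ forces $(u_0'')^2/(u_0')^2 = 2u_0^2$, so the remaining integrand vanishes identically and the representation (\ref{operator-K-plus-soliton}) follows; in the notation of Lemma~\ref{lemma-K-plus} this is just $\mathcal{E}^2 u_0^2/(u_0')^2 = 0$. I do not expect any genuine obstacle, as the result is a direct specialization. The only points deserving a word of care are the vacuity of the sign restriction on $u$ (valid because $u_0' > 0$ everywhere) and the boundedness of the coefficients, which is what removes the Hardy-inequality complication present in the general case.
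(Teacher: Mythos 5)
Your proposal is correct and follows essentially the same route as the paper, which obtains the corollary by specializing the proof of Lemma~\ref{lemma-K-plus}: since $u_0' = \frac{1}{\sqrt{2}}\,\mathrm{sech}^2(x/\sqrt{2}) > 0$ the restriction on $u$ is vacuous, the identity $u_0'' = -\sqrt{2}\,u_0 u_0'$ gives $w = u_x + \sqrt{2}\,u_0 u$ with bounded coefficients, and the residual term $2\mathcal{E}^2\|u_0 w/u_0'\|_{L^2}^2$ vanishes because $(u_0'')^2/(u_0')^2 = 2u_0^2$ when $\mathcal{E}=0$. Your minor reorganization via $K_+(c) = K_+(2) - (c-2)L_+$ and $\langle L_+u,u\rangle_{L^2} = \|w\|_{L^2}^2$ is just a repackaging of the same algebra, so there is nothing to add.
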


\begin{remark}
If $c \le 3$, the right-hand side of the representation
(\ref{operator-K-plus-soliton}) is nonnegative and vanishes if and
only if $w = 0$, which is equivalent to $u = C u_0'$ for some constant
$C$. Note that $u_0' \in H^2(\R)$ in the present case. On the other
hand, using definitions (\ref{operatorsdef}) and the fact that $u_0(x)
\to \pm 1$ as $x \to \pm \infty$, it is easy to verify that $K_+(c)$
has some negative essential spectrum as soon as $c > 3$. Thus the
representation (\ref{operator-K-plus-soliton}) gives a sharp
positivity criterion for the operator $K_+(c)$ in the case of the black soliton
(\ref{black-soliton}).
\label{remark-black-plus}
\end{remark}

\section{Positivity of $\Lambda_{c=2}''(u_0)$ for periodic waves
of large amplitude}
\label{sec:operator}

This section presents the proof of Proposition \ref{prop-positivity}.

The energy functionals (\ref{energy}) and (\ref{energy-S}) generate
two different flows in the hierarchy of integrable NLS equations, see
\cite{Decon}. If we consider $E$ and $S$ as functions of the complex
variables $\psi$ and $\bar{\psi}$, these flows are defined by the
evolution equations
\begin{equation}\label{2flows}
i \frac{\partial \psi}{\partial t} = \frac{\delta E}{\delta \bar{\psi}},
\qquad i \frac{\partial \psi}{\partial \tau} = \frac{\delta S}{\delta
\bar{\psi}},
\end{equation}
where the symbol $\delta$ is used to denote the standard variational
derivative. Here $t$ is the time of the cubic defocusing NLS equation
(\ref{nls}), whereas $\tau$ is the time of the higher-order NLS
equation. Since the quantities $E$ and $S$ are in involution, the
flows defined by both equations in (\ref{2flows}) commute with each
other.

In what follows, we fix some $\mathcal{E} \in (0,1)$ and consider the
periodic wave profile $u_0$ defined by (\ref{sn-periodic}).  Using the
real-valued variables $u,v$ for the perturbations, as in the
representations (\ref{secondE}) and (\ref{secondS}), we obtain the
following evolution equations for the linearized flows of the cubic
NLS equation and the higher-order NLS equation at the periodic wave
profile $u_0$:
\begin{equation}
\label{lin-operators-flow}
\frac{\partial}{\partial t} \left[ \begin{array}{c} u \\ v \end{array} \right]
= \left[ \begin{array}{cc} 0 & L_- \\ -L_+ & 0 \end{array} \right]
\left[ \begin{array}{c} u \\ v \end{array} \right], \qquad
\frac{\partial}{\partial \tau} \left[ \begin{array}{c} u \\ v \end{array}
\right] = \left[ \begin{array}{cc} 0 & M_- \\ -M_+ & 0 \end{array} \right]
\left[ \begin{array}{c} u \\ v \end{array} \right],
\end{equation}
where the operators $L_{\pm}$ and $M_{\pm}$ are given by
(\ref{operatorsdef}). Because the linearized flows also commute with
each other, the operators $L_{\pm}$ and $M_{\pm}$ satisfy the
following intertwining relations
\begin{equation}
\label{intertwining}
L_- M_+ = M_- L_+, \qquad L_+ M_- = M_+ L_-.
\end{equation}
Of course, the relations (\ref{intertwining}) can also be verified by
a direct calculation, using the differential equations (\ref{wave})
and (\ref{first-order}) satisfied by the periodic wave profile
$u_0$. It follows from the relation (\ref{intertwining}) that, for
every $c \in \mathbb{R}$, we have
\begin{equation}
\label{intertwiningK}
L_- K_+(c) = K_-(c) L_+, \qquad L_+ K_-(c) = K_+(c) L_-,
\end{equation}
where $K_{\pm}(c) = M_{\pm} - c L_{\pm}$ as before.

Given the positivity of the operator $K_-(2)$ established in Lemma
\ref{lemma-K-minus}, we shall use the intertwining relations
(\ref{intertwiningK}) to deduce the positivity of the operator
$K_+(2)$. This is achieved by studying all bounded solutions of the
homogeneous equations associated with operators $L_{\pm}$ and
$K_{\pm}(2)$ and by applying a continuation argument from the limit
$\mathcal{E} \to 1$, where positivity of the operator $K_+(2)$ is
proved in Proposition \ref{proposition-eigenvalues}.

\begin{lemma}
\label{lemma-L-plus}
If $u \in L^{\infty}(\mathbb{R}) \cap H^2_{\rm loc}(\mathbb{R})$
satisfies $L_+ u = 0$, then $u = C u_0'$ for some constant $C$.
Moreover, there exists a unique odd, $2T_0$-periodic function $U \in
H^2_{\rm per, odd}(0,2T_0)$ such that $L_+ U = u_0$, where $2T_0$ is
the period of $u_0$.
\end{lemma}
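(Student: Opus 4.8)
The plan is to treat $L_+ = -\partial_x^2 + 3u_0^2 - 1$ as a Schr\"odinger operator with the $T_0$-periodic potential $3u_0^2-1$, and to settle the two assertions separately.

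For the first assertion, I would start from the fact that $L_+ u_0' = 0$, which follows by differentiating the profile equation (\ref{wave}) once in $x$; since $u_0'$ is bounded, it is one bounded solution. To see that it is the only one up to a constant, I would exhibit a second, linearly independent solution that grows without bound. The natural candidate is $\partial_{\mathcal{E}} u_0$: differentiating (\ref{wave}) with respect to the parameter $\mathcal{E}$ gives $L_+(\partial_{\mathcal{E}} u_0) = 0$, and the family $\mathcal{E}\mapsto u_0(\cdot\,;\mathcal{E})$ is smooth because of the explicit representation (\ref{sn-periodic}). The key point is that $\partial_{\mathcal{E}} u_0$ is unbounded: differentiating the periodicity relation $u_0(x+2T_0(\mathcal{E});\mathcal{E}) = u_0(x;\mathcal{E})$ with respect to $\mathcal{E}$ and using $u_0'(x+2T_0)=u_0'(x)$ yields $\partial_{\mathcal{E}} u_0(x+2T_0) = \partial_{\mathcal{E}} u_0(x) - 2T_0'(\mathcal{E})\, u_0'(x)$, so $\partial_{\mathcal{E}} u_0$ acquires a constant drift $-2T_0'(\mathcal{E})\,u_0'$ per period. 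Since $T_0$ is a strictly decreasing function of $\mathcal{E}$, we have $T_0'(\mathcal{E}) \neq 0$, and iterating over $n$ periods shows that $\partial_{\mathcal{E}} u_0$ grows linearly in $x$. As $u_0'$ and $\partial_{\mathcal{E}} u_0$ are then linearly independent solutions of the second-order equation $L_+ u = 0$, every solution is a combination of the two, and it is bounded if and only if the coefficient of $\partial_{\mathcal{E}} u_0$ vanishes, i.e. if and only if it is a multiple of $u_0'$. Equivalently, one may argue through Floquet theory: $u_0'$ is antiperiodic over the potential period $T_0$ with a single sign change, the monodromy matrix has the double Floquet multiplier $-1$, and the drift computation above shows that this matrix is a nontrivial Jordan block rather than $-I$, which forces the second solution to grow linearly.

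For the second assertion, I would regard $L_+$ as a self-adjoint operator on $L^2_{\rm per}(0,2T_0)$ with domain $H^2_{\rm per}(0,2T_0)$. By the first part, any element of its kernel is a bounded solution of $L_+ u = 0$, hence a multiple of $u_0'$, so $\ker L_+ = \mathrm{span}\{u_0'\}$ is one-dimensional. The Fredholm alternative then makes $L_+ U = u_0$ solvable in $H^2_{\rm per}(0,2T_0)$ precisely when $u_0$ is orthogonal to $u_0'$, and indeed $\langle u_0, u_0'\rangle_{L^2_{\rm per}(0,2T_0)} = \half \int_0^{2T_0} (u_0^2)' \dd x = 0$ by periodicity; elliptic regularity then gives $U \in H^2_{\rm per}(0,2T_0)$.

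It remains to arrange oddness and to prove uniqueness. Since the potential $3u_0^2 - 1$ is even and $u_0$ is odd, the reflection $S : U(x) \mapsto -U(-x)$ maps the solution set of $L_+ U = u_0$ into itself; replacing a solution $U$ by the average $\half(U + SU)$ therefore produces an odd solution, which lies in $H^2_{\rm per,odd}(0,2T_0)$. For uniqueness, two odd solutions differ by an element of $\ker L_+ = \mathrm{span}\{u_0'\}$; but $u_0'$ is even, so the only odd multiple of $u_0'$ is $0$, whence the odd solution is unique. The only delicate point in the whole argument is the unboundedness of the second solution in the first assertion — that is, confirming that the monodromy at the spectral value $0$ is genuinely a Jordan block; this is exactly where the strict monotonicity of the period $T_0$ in $\mathcal{E}$ is essential.
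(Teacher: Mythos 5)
Your proof is correct and takes essentially the same route as the paper: the second solution of $L_+u=0$ is $\partial_{\mathcal{E}}u_0$, its unboundedness is derived from the strict monotonicity $T_0'(\mathcal{E})<0$, and the periodic problem $L_+U=u_0$ is solved via the Fredholm alternative with $\langle u_0',u_0\rangle_{L^2_{\rm per}}=0$, oddness and uniqueness following from the symmetry $U\mapsto -U(-\cdot)$ and the evenness of $u_0'$. Your drift identity $\partial_{\mathcal{E}}u_0(x+2T_0)=\partial_{\mathcal{E}}u_0(x)-2T_0'(\mathcal{E})\,u_0'(x)$ is a slightly more self-contained variant of the paper's argument, which instead evaluates $\partial_{\mathcal{E}}u_0$ at $x=0$ and $x=2T_0$ (using $u_0(0;\mathcal{E})=u_0(2T_0(\mathcal{E});\mathcal{E})=0$) and invokes the Floquet dichotomy that a second solution is either $2T_0$-periodic or linearly growing.
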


\begin{proof}
We know that $L_+ u_0' = 0$. Another linearly independent solution to
the equation $L_+ v = 0$ can be obtained by differentiating the
periodic wave profile $u_0$ with respect to the parameter $\mathcal{E}
\in (0,1)$, namely $v = \partial_\mathcal{E} u_0$.  Indeed, if we
differentiate the equation
$$
u_0'' + u_0 - u_0^3 = 0
$$
with respect to the parameter $\mathcal{E}$, we see that
$$
L_+ v = -v'' + (3u_0^2-1) v = 0.
$$
Moreover, $v(x)$ is an odd function of $x$ that grows linearly as $|x| \to
\infty$. The latter claim can be verified by differentiating the
explicit formula (\ref{sn-periodic}) with respect to $\mathcal{E}$,
but that calculation is not immediate because it involves the
derivative of the Jacobi elliptic function ${\rm sn}(\xi,k)$ with
respect to the parameter $k$. Alternatively, we can use Floquet theory
to deduce that $v$ is either periodic of period $2T_0$, where $2T_0$
is the minimal period of $u_0$, or grows linearly at infinity. The
second possibility is excluded by the following argument.  If we
denote $u_0(x) = u_0(x;\mathcal{E})$ and $T_0 = T_0(\mathcal{E})$ to
emphasize the dependence upon the parameter $\mathcal{E}$, we have
by contruction
$$
  u_0(0;\mathcal{E}) = u_0(2T_0(\mathcal{E});\mathcal{E}) = 0.
$$
Differentiating that relation with respect to $\mathcal{E}$, we find
$$
v(0) = 0 \quad \mbox{\rm and}\quad
v(2T_0) + 2u_0'(2T_0) T_0'(\mathcal{E}) = 0.
$$
But we
know that $u_0'(2T_0) = u_0'(0) > 0$ and that $T_0'(\mathcal{E}) < 0$,
hence we deduce that $v(2T_0) > 0$, which implies that $v$ is not
periodic of period $2T_0$. This proves that the kernel of $L_+$ (in
the space of bounded functions) is spanned by $u_0'$, which is the
first part of the statement.

For the second part of the statement, we look for solutions of the
inhomogeneous equation $L_+ U = u_0$ and note that the Fredholm
solvability condition $\langle u_0',u_0 \rangle_{L^2_{\rm per}} = 0$
is trivially satisfied in the space of $2T_0$-periodic
functions. Hence, there exists a unique odd $2T_0$-periodic solution
$U$ of the inhomogeneous equation $L_+ U = u_0$ in the domain of
$L_+$, that is, $U \in H^2_{\rm per, odd}(0,2T_0)$.
\end{proof}

\begin{lemma}
\label{lemma-L-minus}
If $v \in L^{\infty}(\mathbb{R}) \cap H^2_{\rm loc}(\mathbb{R})$
satisfies $L_- v = 0$, then $v = C u_0$ for some constant $C$.
Moreover, there exists a unique even, $2T_0$-periodic function $V \in
H^2_{\rm per, even}(0,2T_0)$ such that $L_- V = u_0'$, where $2T_0$ is
the period of $u_0$.
\end{lemma}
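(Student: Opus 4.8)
The plan is to follow the same two-step scheme as in the proof of Lemma~\ref{lemma-L-plus}, now exploiting the relation $L_- u_0 = 0$ coming from gauge invariance. Here $u_0$ is odd and $2T_0$-periodic, and since $u_0(x+T_0) = -u_0(x)$ the potential $u_0^2-1$ in $L_- = -\partial_x^2 + u_0^2 - 1$ is even and $T_0$-periodic. For the first assertion I would argue by Floquet theory exactly as for $L_+$: the Hill equation $L_- v = 0$ has no first-order term, so the Wronskian of any two solutions is constant and the monodromy matrix $\mathcal{M}$ over one period $T_0$ has $\det\mathcal{M} = 1$; since $u_0$ solves the equation with $u_0(x+T_0) = -u_0(x)$, the value $-1$ is an eigenvalue of $\mathcal{M}$, and $\det\mathcal{M}=1$ forces the second eigenvalue to be $-1$ as well. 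Thus $\mathcal{M}$ has the double eigenvalue $-1$, and only two possibilities remain: either $\mathcal{M} = -I$, in which case all solutions are bounded ($2T_0$-periodic), or $\mathcal{M}$ is a nontrivial Jordan block, in which case a second independent solution grows linearly and the bounded solutions are exactly the multiples of $u_0$.

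The main obstacle is to rule out the degenerate case $\mathcal{M} = -I$. In contrast with $L_+$, where the explicit growing solution $\partial_{\mathcal{E}} u_0$ was produced by differentiating the profile with respect to $\mathcal{E}$ and its unboundedness followed from $T_0'(\mathcal{E}) < 0$, there is no symmetry that directly produces a \emph{bounded} second solution of $L_- v = 0$, so the non-degeneracy must be verified by computation. The cleanest route I would take is to recognize $L_-$ as a one-gap Lam\'e operator. With $z = x\sqrt{(1+\mathcal{E})/2}$ and $k$ as in (\ref{exact-c-plus-minus}), the formulas (\ref{wave})--(\ref{first-order}) and (\ref{sn-periodic}) give $L_- = \frac{1+\mathcal{E}}{2}\bigl(-\partial_z^2 + 2k^2\,\mathrm{sn}^2(z,k)\bigr) - 1$, and a direct check shows that $u_0 \propto \mathrm{sn}(z,k)$ is the Lam\'e eigenfunction at $\Lambda = 1+k^2 = \frac{2}{1+\mathcal{E}}$, which is precisely the simple band edge at the bottom of the semi-infinite spectral band of the $\ell = 1$ Lam\'e operator $-\partial_z^2 + 2k^2\,\mathrm{sn}^2(z,k)$ \cite{Lawden}. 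Since the spectral gap below this edge is open for every $k \in (0,1)$, the companion solution is a genuinely quasiperiodic Floquet solution that grows linearly, which excludes $\mathcal{M} = -I$. (An equivalent elementary route is reduction of order: the second solution $v_2 = u_0\int^x u_0^{-2}\,\mathrm{d}s$ extends smoothly across the zeros of $u_0$, and its linear growth rate over the period $2T_0$ can be computed directly from (\ref{sn-periodic}) and shown to be nonzero.) Either way, the bounded solutions of $L_- v = 0$ are multiples of $u_0$, which proves the first claim.

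For the second assertion I would regard $L_-$ as a self-adjoint operator on $H^2_{\rm per}(0,2T_0)$ with compact resolvent. By the first part its kernel is $\mathrm{span}\{u_0\}$, which consists of \emph{odd} functions, while the coefficient $u_0^2 - 1$ is even, so $L_-$ maps the even subspace $H^2_{\rm per,\,even}(0,2T_0)$ into $L^2_{\rm per,\,even}(0,2T_0)$ with trivial kernel. A self-adjoint Fredholm operator with trivial kernel is boundedly invertible, so $L_-$ is invertible on the even subspace. Since $u_0' = \partial_x u_0$ is even (and $2T_0$-periodic), there exists a unique even $V \in H^2_{\rm per,\,even}(0,2T_0)$ with $L_- V = u_0'$; this is consistent with the Fredholm solvability condition $\langle u_0', u_0\rangle_{L^2_{\rm per}} = 0$, which holds automatically because $u_0'$ is even and $u_0$ is odd. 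This completes the proof.
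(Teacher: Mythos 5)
Your proof is correct, but it takes a genuinely different route from the paper on the one nontrivial point, namely ruling out the degenerate case where all solutions of $L_- v = 0$ are bounded. The paper does this in the most elementary way possible: it simply exhibits the second solution in closed form,
\begin{equation*}
v(x) \,=\, 2u_0'(x) - u_0(x)\int_0^x u_0(y)^2 \dd y,
\end{equation*}
verifies by direct calculation (using $u_0'' = (u_0^2-1)u_0$) that $L_- v = 0$, and observes that this $v$ is even and grows linearly, so the bounded solutions are exactly the multiples of $u_0$. You instead set up the Floquet dichotomy over the period $T_0$ of the potential $u_0^2-1$ (correctly using the antiperiodicity $u_0(x+T_0)=-u_0(x)$ to force the double monodromy eigenvalue $-1$), and then exclude $\mathcal{M}=-I$ by identifying $L_-$ with the one-gap Lam\'e operator $-\partial_z^2 + 2k^2\,\mathrm{sn}^2(z,k)$ shifted so that $u_0 \propto \mathrm{sn}$ sits at the band edge $\Lambda = 1+k^2$; since the three Lam\'e edges $k^2 < 1 < 1+k^2$ are distinct for $k\in(0,1)$, that antiperiodic eigenvalue is simple and the monodromy is a nontrivial Jordan block. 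Your scaling identity $L_- = \frac{1+\mathcal{E}}{2}\bigl(-\partial_z^2 + 2k^2\,\mathrm{sn}^2(z,k)\bigr) - 1$ checks out, and the argument is sound. What the paper's approach buys is self-containedness: one line of calculus, no imported spectral theory. What yours buys is structural insight (it explains \emph{why} the kernel is simple --- the gap below the edge is open --- and the same template handles $L_+$, whose kernel element $u_0' \propto \mathrm{cn}\,\mathrm{dn}$ sits at the other simple edge), at the cost of relying on the Lam\'e band-structure facts, which you should cite from a source that actually contains them (a Hill's-equation reference such as Magnus--Winkler or Eastham, rather than \cite{Lawden}, which treats elliptic functions but not band-edge simplicity). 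For the second assertion, your parity-restriction argument (invertibility of $L_-$ on the even subspace) is just a slightly more structured packaging of the paper's Fredholm argument and is fine.

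One caveat on your parenthetical ``elementary route'': the reduction-of-order formula $v_2 = u_0 \int^x u_0^{-2}\,\mathrm{d}s$ is not globally well defined, since the integrand has nonintegrable singularities at every (simple) zero of $u_0$. The correct statement is that a second solution, defined by initial data and continued across the zeros, agrees with that formula locally between consecutive zeros; making the ``extends smoothly'' claim and the nonvanishing of the growth rate rigorous requires either a regularization of the integral or a matching argument. Since your Lam\'e route is complete, this does not affect the validity of your proof, but as written that alternative would not stand on its own.
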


\begin{proof}
We know that $L_- u_0 = 0$. Another linearly independent solution
to the equation $L_- v = 0$ is given by
$$
  v(x) = 2u_0'(x) - u_0(x) \int_0^x u_0(y)^2 \dd y, \qquad
  x \in \R,
$$
as is easily verified by a direct calculation. Clearly $v(x)$ is
an even function of $x$ that grows linearly as $|x| \to \infty$.
This proves that the kernel of $L_-$ (in the space of bounded functions)
is spanned by $u_0$. The second part of the statement follows by
the same argument as in the proof of Lemma~\ref{lemma-L-plus}.
\end{proof}

\begin{remark}
The solutions $U$ and $V$ of the inhomogeneous equations $L_+ U = u_0$
and $L_- V = u_0'$ can be expressed explicitly in terms of the Jacobi
elliptic functions, see equations (\ref{L0ident}) and (\ref{LPlusUU})
in Appendix A.
\end{remark}

Next, we establish analogues of Lemmas \ref{lemma-L-plus} and
\ref{lemma-L-minus} for the operators $K_{\pm}(c)$ in the particular
case $c = 2$.

\begin{lemma}
\label{lemma-K-minus-positive}
If $v \in L^{\infty}(\mathbb{R}) \cap H^4_{\rm loc}(\mathbb{R})$
satisfies $K_-(2) v = 0$, then $v = C u_0$ for some constant $C$.
\end{lemma}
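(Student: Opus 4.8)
The plan is to combine the pointwise factorization of $K_-(2)$ that already underlies Lemma~\ref{lemma-K-minus} with Floquet theory, so as to reduce the classification of bounded solutions to a computation on a single period. The starting point is the observation that the identity established in the proof of Lemma~\ref{lemma-K-minus} is in fact an operator identity: comparing coefficients and using the profile equation (\ref{wave}), one finds
\[
K_-(2) \,=\, L_-^2 + B^* B, \qquad B v := u_0 v_x - u_0' v,
\]
where $B^*$ is the formal adjoint of the first-order operator $B$. In particular $L_- u_0 = 0$ and $B u_0 = 0$, which re-proves $u_0 \in \ker K_-(2)$. Crucially, the coefficients of $K_-(2)$ depend on $u_0$ only through $u_0^2$ and $u_0 u_0' = \tfrac12(u_0^2)'$, so they are $T_0$-periodic even though $u_0$ itself is only $2T_0$-periodic.

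Next I would invoke Floquet theory for the fourth-order operator $K_-(2)$ with $T_0$-periodic coefficients. Every solution of $K_-(2) v = 0$ is a combination of Bloch solutions $e^{i\kappa x} p(x)$, with $p$ being $T_0$-periodic and $\kappa \in \R$, together with possible secularly growing solutions attached to nontrivial Jordan blocks of the monodromy matrix. A solution bounded on all of $\R$ can involve neither a Floquet multiplier off the unit circle nor a Jordan block at a unit multiplier; hence the space of bounded solutions is spanned by genuine Bloch solutions with $\kappa \in \R$, and it suffices to show that each such solution is a constant multiple of $u_0$.

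The heart of the argument is to run the (complexified) identity of Lemma~\ref{lemma-K-minus} on one period. Fix a Bloch solution $v = e^{i\kappa x} p$ of $K_-(2) v = 0$ and integrate $\overline{v}\,K_-(2) v$ over $[0,T_0]$. Integrating by parts and using $K_-(2) = L_-^2 + B^* B$, I obtain
\[
0 \,=\, \int_0^{T_0} \overline{v}\,K_-(2) v \dd x
\,=\, \int_0^{T_0}\Bigl( |L_- v|^2 + |B v|^2 \Bigr)\dd x + \bigl[\,\mathcal{Q}\,\bigr]_0^{T_0},
\]
where the boundary concomitant $\mathcal{Q}$ is a bilinear expression in the endpoint values of $\overline{v},\overline{v}',\overline{v}''$ and of $L_- v,(L_- v)'$. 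Since $L_-$ and $K_-(2)$ have $T_0$-periodic coefficients, they map Bloch functions with parameter $\kappa$ to Bloch functions with the same $\kappa$; consequently each term of $\mathcal{Q}$ is a product $\overline{e^{i\kappa x}(\cdot)}\,e^{i\kappa x}(\cdot)$ of $T_0$-periodic factors, hence is itself $T_0$-periodic, so $[\mathcal{Q}]_0^{T_0} = 0$. As the remaining integrand is nonnegative, I conclude $L_- v \equiv 0$ and $B v \equiv 0$ on $[0,T_0]$, and therefore on all of $\R$.

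Finally, by Lemma~\ref{lemma-L-minus} the only bounded solutions of $L_- v = 0$ are the multiples of $u_0$, so $v = C u_0$. Since every bounded Bloch solution of $K_-(2) v = 0$ is thus proportional to $u_0$, the full space of bounded solutions reduces to $\mathrm{span}\{u_0\}$, which is the claim. I expect the main obstacle to be the Floquet bookkeeping together with the verification that the boundary concomitant cancels for Bloch data; once the operator factorization and the $T_0$-periodicity of the coefficients are in hand, the sign-definiteness of the period integral does the rest.
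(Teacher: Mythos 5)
Your proposal is correct and follows essentially the same route as the paper's proof: the sum-of-squares identity of Lemma~\ref{lemma-K-minus} (equivalently, your factorization $K_-(2)=L_-^2+B^*B$, which indeed holds as an operator identity thanks to $(1-u_0^2)^2-u_0u_0''=1-u_0^2$), Floquet theory to reduce bounded solutions to Bloch form, elimination of the boundary terms, and Lemma~\ref{lemma-L-minus} to conclude $v=Cu_0$. The only (harmless) difference is how the boundary contribution is killed: you cancel it exactly over one period using the $T_0$-periodicity of the concomitant for Bloch data, whereas the paper integrates over $[-NT_0,NT_0]$, divides by $N$, and lets $N\to\infty$, which requires only boundedness of $v$ and $v_x$.
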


\begin{proof}
Using integration by parts as in the proof of Lemma~\ref{lemma-K-minus},
we obtain the following identity for any $v \in H^4(-NT_0,NT_0)$, where
$N \in \mathbb{N}$ and $2T_0$ is the period of $u_0$:
$$
\int_{-NT_0}^{NT_0} v K_-(2) v\dd x = \int_{-NT_0}^{NT_0} \left( |L_- v|^2 +
|u_0 v_x - u_0' v|^2 \right) dx - \Bigl[ 2 (1-u_0^2) v v_x + u_0 u_0'
v^2 \Bigr] \Big|_{x = -NT_0}^{x = N T_0}.
$$
Assume now that $v \in L^{\infty}(\mathbb{R}) \cap H^4_{\rm loc}
(\mathbb{R})$ satisfies $K_-(2) v = 0$. By standard elliptic
estimates, we know that $v$ is smooth on $\mathbb{R}$ and that all
derivatives of $v$ are bounded. Moreover, since the operator $K_-(2)$
has $T_0$-periodic coefficients, it follows from Floquet theory that
$v(x) = e^{i \gamma x} w(x)$, where $\gamma \in \mathbb{R}$ and $w$ is
smooth on $\mathbb{R}$ and $T_0$-periodic. Using the identity above,
we thus obtain
\begin{align*}
0 \,&=\, \frac{1}{N} \int_{-NT_0}^{NT_0} \left( |L_- v|^2 +
|u_0 v_x - u_0' v|^2 \right) \dd x - \frac{1}{N}\Bigl[ 2 (1-u_0^2)
v v_x + u_0 u_0' v^2 \Bigr] \Big|_{x = -NT_0}^{x = N T_0} \\
\,&=\, \int_{-T_0}^{T_0} \left( |L_- v|^2 + |u_0 v_x - u_0' v|^2 \right)
\dd x - \frac{1}{N}\Bigl[ 2 (1-u_0^2) v v_x + u_0 u_0' v^2 \Bigr]
\Big|_{x = -NT_0}^{x = N T_0}.
\end{align*}
Taking the limit $N \to \infty$ and using the boundedness of $v$ and
$v_x$, we obtain $L_- v = 0$ and $u_0 v_x - u_0' v = 0$ for all $x \in
\mathbb{R}$ (since $v(x) = e^{i \gamma x} w(x)$ and $w$ is
$T_0$-periodic). By Lemma \ref{lemma-L-minus}, we conclude that $v = C
u_0$ for some constant $C$.
\end{proof}

\begin{lemma}
\label{lemma-K-plus-positive}
If $u \in L^{\infty}(\mathbb{R}) \cap H^4_{\rm loc}(\mathbb{R})$
satisfies $K_+(2) u = 0$, then $u = C u_0'$ for some constant $C$.
\end{lemma}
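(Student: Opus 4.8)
The plan is to prove the statement by the same intertwining strategy that handled $K_-(2)$ in Lemma~\ref{lemma-K-minus-positive}, reducing the fourth-order kernel problem for $K_+(2)$ to the second-order problems already solved in Lemmas~\ref{lemma-L-plus} and~\ref{lemma-L-minus}. First I would note that, since $u \in L^\infty(\R) \cap H^4_{\rm loc}(\R)$ solves $K_+(2)u = 0$, elliptic regularity makes $u$ smooth with all derivatives bounded, so that $v := L_+ u$ is smooth and bounded together with its derivatives; in particular $v \in L^\infty(\R)\cap H^4_{\rm loc}(\R)$. Applying the intertwining relation (\ref{intertwiningK}) in the form $L_- K_+(2) = K_-(2) L_+$ gives $K_-(2) v = L_- K_+(2) u = 0$, and Lemma~\ref{lemma-K-minus-positive} forces $v = C_1 u_0$ for some constant $C_1$. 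Thus $L_+ u = C_1 u_0$, and by Lemma~\ref{lemma-L-plus} every bounded solution of this inhomogeneous equation has the form $u = C_1 U + C_2 u_0'$, where $U \in H^2_{\rm per,odd}(0,2T_0)$ is the distinguished periodic solution of $L_+ U = u_0$.

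It then remains to show that the component along $U$ must vanish, i.e. that $C_1 = 0$. Applying $K_+(2)$ to $u = C_1 U + C_2 u_0'$ and using $K_+(2)u_0' = 0$, the equation $K_+(2)u = 0$ reduces to $C_1 K_+(2) U = 0$, so it suffices to prove $K_+(2)U \ne 0$. Here I would use the intertwining relation a second time: since $L_+ U = u_0$ and $K_-(2)u_0 = (M_- - 2L_-)u_0 = 0$, we get $L_-\big(K_+(2)U\big) = K_-(2) L_+ U = K_-(2) u_0 = 0$. As $U$ is periodic, $K_+(2)U$ is bounded, so Lemma~\ref{lemma-L-minus} yields $K_+(2) U = C_3 u_0$ for some constant $C_3$. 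The whole lemma therefore hinges on the single scalar assertion $C_3 \ne 0$.

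Establishing $C_3 \ne 0$ is the main obstacle, and I would resolve it by evaluating $K_+(2)$ on two explicit solutions of $L_+ w = u_0$ and $L_+ w = 0$ coming from symmetries of the profile family. Differentiating the critical-point relations along the scaling family $u_\omega(x) = \sqrt{\omega}\,u_0(\sqrt{\omega}x)$ (for which $\mathcal{E}$ is invariant and the multiplier in $S_\omega = R - \tfrac12(3-\mathcal{E}^2)\omega^2 Q$ carries weight $\omega^2$), and along the parameter $\mathcal{E}$, one obtains the identities
\[
M_+ U_s = (3-\mathcal{E}^2)\,u_0, \qquad M_+ \phi = -\mathcal{E}\,u_0,
\]
where $U_s = \tfrac12 u_0 + \tfrac{x}{2}u_0'$ and $\phi = \partial_{\mathcal{E}} u_0$ solve $L_+ U_s = u_0$ and $L_+\phi = 0$; these can also be checked by a direct (if tedious) computation with (\ref{wave}) and (\ref{first-order}). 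Consequently $K_+(2)U_s = (1-\mathcal{E}^2)u_0$ and $K_+(2)\phi = -\mathcal{E}\,u_0$, while $K_+(2)u_0' = 0$. Writing the periodic solution as $U = U_s + b\,\phi + a\,u_0'$, the secular growth cancels precisely when $T_0 - b\,(2T_0') = 0$, i.e. $b = T_0/(2T_0')$; here I use $U_s(x+2T_0) - U_s(x) = T_0\,u_0'(x)$ and $\phi(x+2T_0)-\phi(x) = -2T_0'\,u_0'(x)$, both obtained by differentiating $u_0(x+2T_0(\mathcal{E});\mathcal{E}) = u_0(x)$. Collecting terms gives
\[
C_3 = (1-\mathcal{E}^2) - \frac{\mathcal{E}\,T_0}{2T_0'},
\]
which is strictly positive for every $\mathcal{E} \in (0,1)$ because $1-\mathcal{E}^2 > 0$ and $T_0'(\mathcal{E}) < 0$. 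Hence $C_3 \ne 0$, forcing $C_1 = 0$ and $u = C_2 u_0'$, as claimed.

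I expect the genuinely delicate point to be this last determination of $C_3$: its sign (equivalently, the non-degeneracy of the zero mode $u_0'$ of $K_+(2)$) is exactly what separates a one-dimensional bounded kernel from a two-dimensional one, and it cannot be sidestepped, since $C_3 = 0$ would make the statement false. The scaling and parameter-differentiation route above keeps the algebra light, but the same constant can alternatively be read off from the explicit representation of $U$ in terms of Jacobi elliptic functions recorded in Appendix~A, which furnishes an independent check.
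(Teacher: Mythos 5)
Your proof is correct, and while its skeleton coincides with the paper's --- the intertwining relation (\ref{intertwiningK}) together with Lemma \ref{lemma-K-minus-positive} gives $L_+u = C_1 u_0$, then Lemma \ref{lemma-L-plus} gives $u = C_1 U + C_2 u_0'$, so that everything hinges on the single scalar fact $K_+(2)U \neq 0$ --- your treatment of that crux is genuinely different. The paper proves $K_+(2)U = D u_0$ with $D \neq 0$ by the explicit Jacobi elliptic function computations of Appendix A, culminating in equation (\ref{MLPlut}), where $D$ is a positive multiple of $k^2 - 1 + 2E(k)/K(k)$; the proportionality to $u_0$ there emerges only from the computation, although Remark \ref{remark-appendix} explains it structurally, exactly as you do by applying the intertwining relation a second time ($L_- K_+(2)U = K_-(2)u_0 = 0$ and Lemma \ref{lemma-L-minus}). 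You instead evaluate the constant softly: differentiating the critical-point identity $R'(u_0) = (3-\mathcal{E}^2)u_0$ along the scaling family and along $\mathcal{E}$ does yield $M_+ U_s = (3-\mathcal{E}^2)u_0$ and $M_+\phi = -\mathcal{E}u_0$ with the paper's normalization $S''(u_0) = 2M_+$ on real directions (the factor-of-two bookkeeping checks out against (\ref{secondS})); your monodromy relations $U_s(x+2T_0)-U_s(x) = T_0 u_0'(x)$ and $\phi(x+2T_0)-\phi(x) = -2T_0'(\mathcal{E}) u_0'(x)$ are correct; and the resulting $C_3 = (1-\mathcal{E}^2) - \mathcal{E}T_0/(2T_0') > 0$ uses only the period monotonicity $T_0'(\mathcal{E})<0$, a fact the paper already quotes from \cite{GH2} and uses in the proof of Lemma \ref{lemma-L-plus}. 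What your route buys is a proof with no elliptic-function identities at all and a constant whose positivity is transparent; its sign agrees with the appendix constant, providing a cross-check (equating the two expressions would in fact produce a nontrivial elliptic identity). What the paper's route buys is the explicit periodic solution (\ref{function-U-final}) and the exact value of $D$ in terms of $E(k)/K(k)$, which also feed the computation of $\mu''(0)$ in (\ref{expression-mu}). One small imprecision worth fixing: under $u_\omega = \sqrt{\omega}\,u_0(\sqrt{\omega}\,\cdot)$ the first-integral constant scales as $\mathcal{E}_\omega = \omega\mathcal{E}$ (only the dimensionless shape parameter $k$ is invariant), which is precisely what makes your multiplier $\tfrac12(3-\mathcal{E}^2)\omega^2 = \tfrac12(3\omega^2 - \mathcal{E}_\omega^2)$ the right one; this phrasing slip does not affect any step of the argument.
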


\begin{proof}
Assume that $u \in L^{\infty}(\mathbb{R}) \cap H^4_{\rm loc}(\mathbb{R})$
satisfies $K_+(2) u = 0$.  By the intertwining
relation (\ref{intertwiningK}), we have $K_-(2) L_+ u = L_- K_+(2) u =
0$.  Using Lemma~\ref{lemma-K-minus-positive}, we deduce that $L_+ u =
B u_0$ for some constant $B$. Finally, Lemma~\ref{lemma-L-plus} implies
that $u = B U + C u_0'$ for some constant $C$. In particular, we have
$0 = K_+(2) u = B K_+(2) U$, because  $K_+(2) u_0' = 0$.
Now an explicit computation that is carried out in Appendix~A shows
that $K_+(2) U = D u_0$ for some constant $D \neq 0$, see equation 
(\ref{MLPlut}), so that $K_+(2)
U$ is not identically zero. Thus $B = 0$, hence $u = C u_0'$.
\end{proof}

\begin{remark}
The result of Lemma \ref{lemma-K-plus-positive} yields the conclusion
of Proposition \ref{prop-positivity}. Indeed, in the limit
$\mathcal{E} \to 1$, positivity of the operator $K_+(2)$ is proved in
Proposition \ref{proposition-eigenvalues}.  All Floquet--Bloch bands
are strictly positive, except for the lowest band that touches the
origin because of the zero eigenvalue due to translational
symmetry, see Figure~\ref{fig2}.  When the
parameter $\mathcal{E}$ is decreased from $1$ to $0$, the
Floquet--Bloch spectrum of $K_+(2)$ evolves continuously, and
positivity of the spectrum is therefore preserved as long as no other
band touches the origin. Such an event would result in the
appearance of another bounded solution to the homogeneous equation
$K_+(2) u = 0$, besides the zero mode $u_0'$ due to translation
invariance. By Lemma~\ref{lemma-K-plus-positive}, such a solution
does not exist, hence $K_+(2)$ is a nonnegative operator for
any $\mathcal{E} \in (0,1)$.
\end{remark}

To conclude this section, we note that the intertwining relations
(\ref{intertwiningK}) and the positivity of the operators $K_{\pm}(2)$
established in Proposition~\ref{prop-positivity} imply the
spectral stability of the periodic wave. Consider the
linearized operator with $T_0$-periodic coefficients given by
\begin{equation}
\label{operator-lin}
\mathcal{J} \mathcal{L} \,:=\,  \left[ \begin{array}{cc} 0 & 1 \\ -1 & 0
\end{array} \right] \left[ \begin{array}{cc} L_+ & 0 \\ 0 & L_-
\end{array} \right] \,=\, \left[ \begin{array}{cc} 0 & L_- \\ -L_+ & 0
\end{array} \right],
\end{equation}
and acting on vectors in $L^2(\mathbb{R}) \times L^2(\mathbb{R})$. We say that the periodic wave is
spectrally stable if the Floquet--Bloch spectrum of $\mathcal{J}
\mathcal{L}$ is purely imaginary. Let $\lambda \in \mathbb{C}$ belong
to the Floquet--Bloch spectrum, so that $\mathcal{J}\mathcal{L} \psi =
\lambda\psi$ for some nonzero eigenfunction $\psi$. We know that
$\psi(x) = e^{i \gamma x} \tilde{\psi}(x)$, where $\gamma \in
\mathbb{R}$ and $\tilde{\psi}$ is $T_0$-periodic. We want to show that
$\lambda \in i \mathbb{R}$.

Let $\mathcal{K} := {\rm diag}[K_+(2),K_-(2)]$. Then
$\mathcal{J}\mathcal{L}\mathcal{J} \mathcal{K}\psi =
\mathcal{J} \mathcal{K}\mathcal{J} \mathcal{L} \psi =
\lambda \mathcal{J} \mathcal{K} \psi$, because the operators
$\mathcal{J} \mathcal{L}$ and $\mathcal{J}\mathcal{K}$
commute due to the intertwining relations (\ref{intertwiningK}).
As $\mathcal{J}$ is invertible, we thus have $\mathcal{L}
\mathcal{J} \mathcal{K} \psi = \lambda \mathcal{K}\psi$.
If we now take the scalar product of both sides with the eigenfunction $\psi$ in
the space $L^2(0,T_0) \times L^2(0,T_0)$, we obtain
$$
\lambda \langle \psi, \mathcal{K} \psi \rangle_{L^2} = \langle \psi,
\mathcal{L} \mathcal{J} \mathcal{K} \psi \rangle_{L^2} =
- \langle \mathcal{J} \mathcal{L} \psi, \mathcal{K} \psi \rangle_{L^2}
= - \bar{\lambda} \langle \psi, \mathcal{K} \psi \rangle_{L^2},
$$
where we have used the fact that $\mathcal{L}$ is self-adjoint and
$\mathcal{J}$ is skew-adjoint. If $\lambda \neq 0$, then $\psi$ is
not a linear combination of the two neutral eigenfunctions
$(u_0',0)$ and $(0,u_0)$. In that case, we have $\langle
\psi, \mathcal{K} \psi \rangle_{L^2} > 0$ by
Proposition~\ref{prop-positivity}, and the identity above shows
that $\lambda = -\bar{\lambda}$, that is, $\lambda \in i \mathbb{R}$.

\begin{remark}
Spectral stability of the periodic wave is established in \cite{Decon},
where explicit expressions for the Floquet--Bloch spectrum of the operator
$\mathcal{J} \mathcal{L}$ and the associated eigenfunctions are
obtained using Jacobi elliptic functions. In our approach, once
positivity of the operator $\mathcal{K}$ is known, the spectral
stability of the periodic wave follows from the commutativity of the
operators $\mathcal{J} \mathcal{L}$ and $\mathcal{J} \mathcal{K}$ and
is established by a general argument that does not use the specific
form of the eigenfunctions.
\end{remark}

\section{Proof of orbital stability of a periodic wave}
\label{sec:periodic}

This section is devoted to the proof of Theorem~\ref{theorem-per}.

We fix $\mathcal{E}\in (0,1)$ and consider the periodic wave profile
$u_0$ given by (\ref{sn-periodic}).  Let $T$ be a multiple of the
period $2T_0$ of $u_0$, so that $T = 2NT_0$ for some integer $N \ge 1$. If
$\psi_0 \in H^2_{\rm per}(0,T)$ is close to $u_0$ in the sense of the
initial bound (\ref{bound-initial-per}), we claim that the solution
$\psi \in C(\R,H^2_{\rm per}(0,T))$ of the cubic NLS equation
(\ref{nls}) with initial data $\psi_0$ can be characterized as
follows.

For any $t \in \R$, there exist modulation parameters $\xi(t) \in \R$
and $\theta(t) \in \R/(2\pi\Z)$ such that
\begin{equation}
\label{decomposition}
e^{it+i \theta(t)} \psi(x + \xi(t),t) = u_0(x) + u(x,t) + i v(x,t),
\quad x \in \R,
\end{equation}
where $u(\cdot,t), v(\cdot,t) \in H^2_{\rm per}(0,T)$ are real-valued
functions satisfying the orthogonality conditions
\begin{equation}
\label{projections}
\langle u_0', u(\cdot,t) \rangle_{L^2_{\rm per}} = 0, \qquad \langle
u_0, v(\cdot,t) \rangle_{L^2_{\rm per}} = 0,
\end{equation}
where $\langle \cdot\,, \cdot\rangle_{L^2_{\rm per}}$ denotes the
usual scalar product in $L^2_{\rm per}(0,T)$. Note that the
orthogonality conditions (\ref{projections}) are not symplectic
orthogonality conditions for the NLS equation, in contrast with the
conditions that are often used to study the asymptotic stability of
nonlinear waves \cite{PelBook}.

To prove the decomposition (\ref{decomposition}), we proceed in two
steps. We first show that the representation (\ref{decomposition})
holds whenever $\psi(\cdot,t)$ is sufficiently close to the orbit of
$u_0$ under translations and phase rotations.

\begin{lemma}
\label{lemma-a-theta-per1}
There exists constants $\epsilon_0 \in (0,1)$ and $C_0 \ge 1$ such that,
for any $\psi \in H^2_{\rm per}(0,T)$ satisfying
\begin{equation}
\label{infinimum}
d := \inf_{\xi, \theta \in \R} \| e^{i \theta} \psi(\cdot + \xi)
-  u_0 \|_{H^2_{\rm per}} \le \epsilon_0,
\end{equation}
one can find modulation parameters $\xi \in \R$ and $\theta \in \R/(2\pi\Z)$ such that
\begin{equation}
\label{decomposition-lemma}
e^{i \theta} \psi(x + \xi) = u_0(x) + u(x) + i v(x), \quad x \in \R,
\end{equation}
where $u,v \in H^2_{\rm per}(0,T)$ satisfy the orthogonality conditions
(\ref{projections}) and $d \le \|u + i v\|_{H^2_{\rm per}} \le C_0d$.
\end{lemma}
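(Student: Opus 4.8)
The plan is to construct the modulation parameters $\xi,\theta$ by applying the implicit function theorem to the two scalar conditions that encode (\ref{projections}). Given $\psi$ and parameters $(\xi,\theta)$, write $e^{i\theta}\psi(\cdot+\xi)-u_0 = u+iv$ with $u = \re\bigl(e^{i\theta}\psi(\cdot+\xi)\bigr)-u_0$ and $v = \im\bigl(e^{i\theta}\psi(\cdot+\xi)\bigr)$, and set
$$
G_1(\xi,\theta;\psi) = \langle u_0',u\rangle_{L^2_{\rm per}}, \qquad
G_2(\xi,\theta;\psi) = \langle u_0,v\rangle_{L^2_{\rm per}}.
$$
The orthogonality conditions (\ref{projections}) are exactly $G_1=G_2=0$. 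Although the map $\xi\mapsto\psi(\cdot+\xi)$ is only continuous from $\R$ into $H^2_{\rm per}(0,T)$, each $G_j$ is a smooth function of $(\xi,\theta)$: after a change of variable in the integral the $\xi$-derivatives fall on the smooth profile $u_0$ rather than on $\psi$, and the resulting pairings depend continuously, indeed affinely, on $\psi\in L^2_{\rm per}(0,T)$.

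First I would treat the base point $\psi=u_0$, $(\xi,\theta)=(0,0)$, at which $G_1=G_2=0$. Differentiating under the integral sign and using $\partial_\xi[e^{i\theta}u_0(\cdot+\xi)]|_{0,0}=u_0'$ and $\partial_\theta[e^{i\theta}u_0(\cdot+\xi)]|_{0,0}=iu_0$, together with the fact that $u_0$ and $u_0'$ are real-valued, one finds that the Jacobian of $(G_1,G_2)$ with respect to $(\xi,\theta)$ is the diagonal matrix ${\rm diag}\bigl(\|u_0'\|_{L^2_{\rm per}}^2,\|u_0\|_{L^2_{\rm per}}^2\bigr)$. Since $u_0$ is a nonconstant periodic profile, both diagonal entries are strictly positive, so this Jacobian is invertible. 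The implicit function theorem, with $\psi$ as a parameter, then yields a radius $\rho>0$ and a Lipschitz map $\psi\mapsto(\xi_1(\psi),\theta_1(\psi))$, defined for $\|\psi-u_0\|_{H^2_{\rm per}}<\rho$ and vanishing at $\psi=u_0$, such that $G_1=G_2=0$ for these parameters.

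The remaining point is to pass from closeness to $u_0$ itself to closeness to its orbit, as measured by $d$ in (\ref{infinimum}). Choosing $\epsilon_0$ so small that $2\epsilon_0<\rho$, and picking a near-minimizer $(\xi_0,\theta_0)$ of (\ref{infinimum}) with $\|e^{i\theta_0}\psi(\cdot+\xi_0)-u_0\|_{H^2_{\rm per}}\le 2d$, I would apply the previous step to the shifted function $\tilde\psi=e^{i\theta_0}\psi(\cdot+\xi_0)$ to obtain corrections $(\xi_1,\theta_1)$. Since translations and phase rotations commute and compose additively, the parameters $\xi=\xi_0+\xi_1$ and $\theta=\theta_0+\theta_1$ give the decomposition (\ref{decomposition-lemma}) with $u+iv$ satisfying (\ref{projections}). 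The lower bound $d\le\|u+iv\|_{H^2_{\rm per}}$ is immediate from the definition of $d$ as an infimum. For the upper bound I would use that translations and phase rotations act as isometries on $H^2_{\rm per}(0,T)$ and that $|\xi_1|+|\theta_1|\le C\|\tilde\psi-u_0\|_{H^2_{\rm per}}\le 2Cd$ by the Lipschitz estimate; splitting
$$
u+iv=e^{i\theta_1}(\tilde\psi-u_0)(\cdot+\xi_1)+\bigl(e^{i\theta_1}u_0(\cdot+\xi_1)-u_0\bigr)
$$
then bounds the first term by $2d$ and the second, since $u_0$ is smooth, by a constant multiple of $|\xi_1|+|\theta_1|$, which yields $\|u+iv\|_{H^2_{\rm per}}\le C_0 d$.

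The only genuine content is the invertibility of the Jacobian, which is transparent here thanks to the block structure coming from the real-valued profile. I expect the main technical care to lie not in that computation but in the two bookkeeping issues above: handling the differentiability in $\xi$ by transferring derivatives onto the smooth profile $u_0$, and organizing the two-step reduction, near-minimizer followed by implicit-function correction, so that the constants $\epsilon_0$ and $C_0$ are uniform and the quantitative bounds $d\le\|u+iv\|_{H^2_{\rm per}}\le C_0 d$ hold.
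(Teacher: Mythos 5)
Your proposal is correct and takes essentially the same approach as the paper: both reduce the orthogonality conditions (\ref{projections}) to the zero set of the same two pairings with the shift placed on the smooth profile $u_0$, invoke the invertible Jacobian $\mathrm{diag}\bigl(\|u_0'\|_{L^2_{\rm per}}^2,\|u_0\|_{L^2_{\rm per}}^2\bigr)$ (up to an $\mathcal{O}(d)$ perturbation), and obtain the bound $\|u+iv\|_{H^2_{\rm per}}\le C_0 d$ by the same triangle-inequality splitting using smoothness of $u_0$. The only cosmetic difference is that the paper runs a quantitative implicit-function (Newton-type) argument directly at the minimizer $(\xi_0,\theta_0)$ of (\ref{infinimum}) (which is attained by periodicity), whereas you anchor the implicit function theorem once at the base point $(u_0,0,0)$ with $\psi$ as a parameter and then conjugate by the symmetry group via a near-minimizer; the substance is identical.
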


\begin{proof}
We consider the smooth function ${\bf f} : \R^2 \to \R^2$ defined by
$$
{\bf f}(\xi,\theta) = \left[ \begin{array}{l} \langle u_0'(\cdot
- \xi), {\rm Re}(e^{i \theta} \psi) \rangle_{L^2_{\rm per}} \\
 \langle u_0(\cdot - \xi), {\rm Im}(e^{i \theta} \psi)
\rangle_{L^2_{\rm per}} \end{array} \right].
$$
We have ${\bf f}(\xi,\theta) = {\bf 0}$ if and only if $\psi$
can be represented as in the decomposition (\ref{decomposition-lemma}) with
$u,v$ satisfying the orthogonality conditions (\ref{projections}).
Let $(\xi_0,\theta_0) \in \R^2$ denote the arguments of the infinimum
in (\ref{infinimum}) (note that one can restrict the values of
$(\xi,\theta)$ to $[0,T] \times [0,2\pi]$, so that the minimum
exists). Then assumption (\ref{infinimum}) implies that $\|{\bf f}
(\xi_0,\theta_0) \| \le C d$, for some constant $C$
independent of $\psi$. On the other hand, the Jacobian matrix of
the function ${\bf f}$ at the point $(\xi_0,\theta_0)$ is given by
\begin{align*}
D {\bf f}(\xi_0,\theta_0) \,&=\,
\left[ \begin{array}{cc} \| u_0' \|_{L^2_{\rm per}}^2 & 0 \\ 0 &
\| u_0 \|_{L^2_{\rm per}}^2 \end{array} \right] \\
&\quad\, + \left[ \begin{array}{cc} -\langle u_0'',
{\rm Re}(e^{i \theta_0} \psi(\cdot + \xi_0) - u_0) \rangle_{L^2_{\rm per}} &
-\langle u_0', {\rm Im}(e^{i \theta_0} \psi(\cdot + \xi_0) - u_0)
\rangle_{L^2_{\rm per}} \\
-\langle u_0', {\rm Im}(e^{i \theta_0} \psi(\cdot + \xi_0) - u_0)
\rangle_{L^2_{\rm per}} &
\langle u_0, {\rm Re}(e^{i \theta_0} \psi(\cdot + \xi_0) - u_0)
\rangle_{L^2_{\rm per}} \end{array} \right].
\end{align*}
The first term in the right-hand side is a fixed invertible matrix
and the second term is bounded in norm by $Cd$, hence $D {\bf f}
(\xi_0,\theta_0)$ is invertible if $\epsilon_0$ is small enough,
with $\|(D {\bf f}(\xi_0,\theta_0))^{-1}\| \le C$ where $C$ is a
positive constant independent of $\psi$. Finally, it is
straightforward to verify that the second order derivative of
${\bf f}$ is uniformly bounded if $\epsilon_0 < 1$. These observations
together imply that there exists a unique pair $(\xi,\theta)$, in the
$\mathcal{O}(d)$ neighborhood of the point $(\xi_0,\theta_0)$, such
that ${\bf f}(\xi,\theta) = {\bf 0}$.  Thus, we have the decomposition
(\ref{decomposition-lemma}) with these values of $(\xi,\theta)$, and
\begin{align*}
  \|u+iv\|_{H^2_{\rm per}} \,&=\, \|e^{i\theta}\psi(\cdot+\xi)-u_0\|_{H^2_{\rm per}}
  = \|\psi - e^{-i\theta}u_0(\cdot-\xi)\|_{H^2_{\rm per}}\\
  \,&\le\, \|\psi - e^{-i\theta_0}u_0(\cdot-\xi_0)\|_{H^2_{\rm per}} +
  \|e^{-i\theta_0}u_0(\cdot-\xi_0) - e^{-i\theta}u_0(\cdot-\xi)\|_{H^2_{\rm per}} \\
  \,&\le\, C_0d,
\end{align*}
where $C_0 \geq 1$ is independent of $\psi$. This concludes the proof.
\end{proof}

We next show that the solution $\psi(\cdot,t)$ of the cubic NLS
equation (\ref{nls}) stays close to the orbit of $u_0$ for all
times. To show this, we use the conserved quantity $\Lambda_c$ given
by (\ref{Lyapunov-functional}), where it is understood that the
integration domain $\I = (0,T)$ is used in the definitions of all
functionals (\ref{energy}), (\ref{charge}), and (\ref{energy-R}).
Because positivity of the second variation of $\Lambda_c$ is only proved
for $c = 2$ independently of the parameter $\mathcal{E}$, see
Proposition \ref{prop-positivity}, we assume henceforth that $c = 2$.

\begin{lemma}
\label{Lambdalem}
Assume that $\psi$ is given by (\ref{decomposition-lemma})
for some $(\xi,\theta) \in \R^2$ and some real-valued functions
$u, v \in H^2_{\rm per}(0,T)$ satisfying the orthogonality conditions
(\ref{projections}). There exist positive constants $C_1$, $C_2$,
and $\epsilon_1$ such that, if $\|u + i v\|_{H^2_{\rm per}} \le \epsilon_1$, then
\begin{equation}
\label{Lambdacomp}
C_1 \|u + i v\|_{H^2_{\rm per}}^2  \le  \Lambda_{c=2}(\psi) - \Lambda_{c=2}(u_0)
\le C_2 \|u + i v \|_{H^2_{\rm per}}^2.
\end{equation}
\end{lemma}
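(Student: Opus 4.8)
The plan is to Taylor-expand the conserved quantity around the critical point $u_0$ and then to establish coercivity of the resulting quadratic form under the orthogonality conditions (\ref{projections}). Since $\Lambda_{c=2}$ is a polynomial functional of $\psi$ and its derivatives up to second order, and since $\Lambda_{c=2}'(u_0)=0$, substituting $\psi = u_0 + u + iv$ gives
\[
\Lambda_{c=2}(\psi) - \Lambda_{c=2}(u_0) = \tfrac12\langle \Lambda_{c=2}''(u_0)[u,v],[u,v]\rangle + N(u,v),
\]
where $N(u,v)$ collects the cubic and higher-order terms in $(u,v)$. Because we work on the bounded interval $(0,T)$, where $H^1_{\rm per}(0,T) \hookrightarrow L^\infty(0,T)$, each monomial in $N$ can be bounded by a product of an $L^\infty$ norm and $L^2$ norms of $(u,v)$ and their derivatives up to order two, so that $|N(u,v)| \le C\|u+iv\|_{H^2_{\rm per}}^3$ whenever $\|u+iv\|_{H^2_{\rm per}} \le 1$. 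Combined with the bound $|\langle\Lambda_{c=2}''(u_0)[u,v],[u,v]\rangle| \le C\|u+iv\|_{H^2_{\rm per}}^2$, which follows from boundedness of the coefficients of the fourth-order operators $K_\pm(2)$, this already yields the upper bound in (\ref{Lambdacomp}).

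The lower bound is the substantial part. Using the block-diagonal representation (\ref{Lambdasecond}) with $c=2$, we have
\[
\tfrac12\langle\Lambda_{c=2}''(u_0)[u,v],[u,v]\rangle = \langle K_+(2)u,u\rangle_{L^2} + \langle K_-(2)v,v\rangle_{L^2},
\]
so it suffices to prove, on $L^2_{\rm per}(0,T)$, the coercivity estimates $\langle K_+(2)u,u\rangle_{L^2} \ge c_0\|u\|_{H^2_{\rm per}}^2$ and $\langle K_-(2)v,v\rangle_{L^2} \ge c_0\|v\|_{H^2_{\rm per}}^2$, valid for all $u$ with $\langle u_0',u\rangle_{L^2_{\rm per}} = 0$ and all $v$ with $\langle u_0,v\rangle_{L^2_{\rm per}} = 0$. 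I would first observe that $K_\pm(2)$ are self-adjoint, fourth-order elliptic operators on the compact interval, hence have compact resolvent and purely discrete spectrum. By Proposition \ref{prop-positivity} this spectrum is nonnegative, and since any element of $\ker K_\pm(2)$ in $L^2_{\rm per}(0,T)$ is automatically a bounded solution of the corresponding homogeneous equation, Lemmas \ref{lemma-K-plus-positive} and \ref{lemma-K-minus-positive} identify $\ker K_+(2) = \mathrm{span}\{u_0'\}$ and $\ker K_-(2) = \mathrm{span}\{u_0\}$. Thus zero is a simple, isolated eigenvalue of each operator, and the orthogonality conditions (\ref{projections}) place $u$ and $v$ exactly in the orthogonal complement of the respective kernels. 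Discreteness then supplies a spectral gap $\delta > 0$ with $\langle K_+(2)u,u\rangle_{L^2} \ge \delta\|u\|_{L^2}^2$ on the constrained subspace, and similarly for $K_-(2)$.

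It remains to upgrade this $L^2$ coercivity to an $H^2$ estimate. Here I would use the fact that the leading term of each quadratic form is $\|u_{xx}\|_{L^2}^2$, while all lower-order coefficients are bounded; a G\aa rding-type inequality together with the interpolation bound $\|u_x\|_{L^2}^2 \le \varepsilon\|u_{xx}\|_{L^2}^2 + C_\varepsilon\|u\|_{L^2}^2$ gives $\langle K_+(2)u,u\rangle_{L^2} \ge \tfrac12\|u_{xx}\|_{L^2}^2 - A\|u\|_{L^2}^2$ for some constant $A$. Forming a convex combination of this inequality with the $L^2$ gap estimate, and choosing the weight small enough that the negative $\|u\|_{L^2}^2$ contribution is absorbed, produces $\langle K_+(2)u,u\rangle_{L^2} \ge c_0(\|u_{xx}\|_{L^2}^2 + \|u\|_{L^2}^2) \ge c_0'\|u\|_{H^2_{\rm per}}^2$, and likewise for $K_-(2)$. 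Adding the two blocks yields $\tfrac12\langle\Lambda_{c=2}''(u_0)[u,v],[u,v]\rangle \ge c\|u+iv\|_{H^2_{\rm per}}^2$, and combining with $|N(u,v)| \le C\|u+iv\|_{H^2_{\rm per}}^3$ and choosing $\epsilon_1$ small enough to absorb the cubic remainder into half of the quadratic term gives the lower bound in (\ref{Lambdacomp}) with $C_1 = c/2$. The main obstacle is precisely the passage from the statement that $\Lambda_{c=2}''(u_0)$ is positive apart from the two neutral directions to a uniform coercivity estimate: this hinges on the discreteness of the spectrum on the bounded domain, which converts ``positive except on the kernel'' into a genuine spectral gap, and on the G\aa rding/interpolation argument that transfers that gap from $L^2$ to $H^2$.
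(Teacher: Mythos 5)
Your proof is correct, and its skeleton coincides with the paper's: Taylor expansion of $\Lambda_{c=2}$ around the critical point $u_0$ with a cubic remainder controlled through the embedding $H^1_{\rm per}(0,T)\hookrightarrow L^\infty(0,T)$, an $L^2$ spectral gap for $K_\pm(2)$ on the orthogonal complements of their kernels, and a G{\aa}rding/interpolation step upgrading the gap to $H^2$ coercivity. The one point where you take a genuinely different route is the justification of the spectral gap on $L^2_{\rm per}(0,T)$. The paper obtains it by discretizing the Floquet--Bloch bands of $K_\pm(2)$: on $H^4_{\rm per}(0,T)$ with $T=2NT_0$ the Floquet parameter runs over a discrete set, and the band bounds (\ref{expansion-lambda-bounds}) (for small amplitude) together with Proposition~\ref{prop-positivity} (for general $\mathcal{E}$) show that zero is attained only at $\kappa=0$, by the symmetry modes. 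You instead argue abstractly that $K_\pm(2)$ have compact resolvent on the bounded domain, hence discrete spectrum; you correctly observe that a $T$-periodic kernel element extends to a bounded solution on $\R$, so that Lemmas~\ref{lemma-K-minus-positive} and \ref{lemma-K-plus-positive} identify $\ker K_+(2)={\rm span}\{u_0'\}$ and $\ker K_-(2)={\rm span}\{u_0\}$ directly, making zero a simple isolated eigenvalue of each operator. This is legitimate and arguably cleaner, since it bypasses the small-amplitude band estimates entirely and leans only on the kernel characterization that underlies Proposition~\ref{prop-positivity} anyway. The trade-off is quantitative: the paper's band picture shows that the smallest admissible nonzero Floquet parameter is of order $T_0/T$, so the bound $\lambda_{-1}^\pm(\kappa)\ge C_1\kappa^2$ makes visible that the gap, and hence the constant $C_1$ in (\ref{Lambdacomp}), degenerates as $T/T_0\to\infty$ (cf.\ Remark~\ref{limitation-rem}), whereas your compactness argument delivers a gap for each fixed $T$ but gives no control on its size.
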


\begin{proof}
We first note that the functional $\Lambda_c$ is invariant under
translations and phase rotations in $H^2_{\rm per}(0,T)$, so that
$\Lambda_c(\psi) = \Lambda_c(u_0 + u + iv)$ if $\psi$ satisfies
the representation (\ref{decomposition-lemma}). Therefore, recalling
that $u_0$ is a critical point of $\Lambda_c$ and using the same
notations as in Section~\ref{sec:small}, we find
\begin{equation}
\label{DeltaLambda}
\Lambda_c(\psi) - \Lambda_c(u_0) = \langle K_+(c) u, u
\rangle_{L^2_{\rm per}} + \langle K_-(c) v, v \rangle_{L^2_{\rm per}}
+ N_c(u,v),
\end{equation}
where $N_c(u,v)$ collects all terms that are at least cubic
in $(u,v)$. In particular, there exists a constant $C > 0$ such
that, if $\|u + i v \|_{H^2_{\rm per}} \le \epsilon_1$, we have the estimate
\begin{equation}
\label{bound-cubic-terms}
|N_c(u,v)| \le C  \|u + i v\|^3_{H^2_{\rm per}}.
\end{equation}
The upper bound in (\ref{Lambdacomp}) holds from the expressions
(\ref{secondE}) and (\ref{secondS}) for the quadratic part,
the estimate (\ref{bound-cubic-terms}) for the cubic and quartic parts,
and the decomposition (\ref{DeltaLambda}).

To bound the expression (\ref{DeltaLambda}) from below, we use the
spectral properties of the operators $K_\pm(c)$ established in
Sections~\ref{sec:small}, \ref{sec:positive}, and \ref{sec:operator}. 

For periodic waves of small amplitude and for $c$ in the interval $(c_-,c_+)$, 
we know from Propositions~\ref{prop-per} and \ref{proposition-eigenvalues} that the spectrum of
$K_\pm(c)$ in $L^2(\R)$ is the union of the nonnegative Floquet--Bloch spectral bands. 
If $K_\pm(c)$ are considered as operators in $L^2_{\rm per}(0,T)$ with $T = 2NT_0$, the same result
holds except that the Floquet parameter only takes discrete values. In view of the bounds
(\ref{expansion-lambda-bounds}), this discretization of the Floquet--Bloch spectral bands 
implies that both $K_+(c)$ and $K_-(c)$ have exactly one zero eigenvalue, and that the rest of the
spectrum is positive and bounded away from zero. As was already
observed, the kernels of $K_\pm(c)$ are due to the symmetries of the
NLS equation, and we have the explicit formulas
(\ref{zero-eigenfunctions}) for the eigenvectors. Thus, the orthogonality conditions
(\ref{projections}) mean precisely that $u$ is orthogonal in $L^2_{\rm per}(0,T)$ 
to the kernel of $K_+(c)$ and $v$ to the kernel of $K_-(c)$. 

Although the results of Propositions \ref{prop-per} and \ref{proposition-eigenvalues}
holds for periodic waves of small amplitude where $\mathcal{E}$ is close to one,
Proposition \ref{prop-positivity} implies that the same result hold for periodic waves of 
arbitrary amplitude independently of the parameter $\mathcal{E} \in (0,1)$ in the case $c = 2$.
It then follows that there is a positive constant $C$
such that
$$
\langle K_+(2) u, u \rangle_{L^2_{\rm per}} \ge C \|u\|_{L^2_{\rm per}}^2 \quad
\mbox{\rm and} \quad
\langle K_-(2) v, v \rangle_{L^2_{\rm per}} \ge C \|v\|_{L^2_{\rm per}}^2.
$$
Using in addition G{\aa}rding's inequality for the elliptic operators
$K_\pm(c)$ we conclude that
\begin{equation}
\label{bound-quadratic-forms}
\langle K_+(2) u, u \rangle_{L^2_{\rm per}} \ge C \|u\|_{H^2_{\rm per}}^2, \quad
\langle K_-(2) v, v \rangle_{L^2_{\rm per}} \ge C \|v\|_{H^2_{\rm per}}^2,
\end{equation}
with a possibly smaller constant $C$. The lower bound in
(\ref{Lambdacomp}) is a direct consequence of (\ref{DeltaLambda}),
(\ref{bound-cubic-terms}), and (\ref{bound-quadratic-forms}).
\end{proof}

Without loss of generality, we assume from now on that $C_0 \epsilon_0
\le \epsilon_1$, where $C_0$, $\epsilon_0$, and $\epsilon_1$ are as in
the previous lemmas. It then follows from
Lemmas~\ref{lemma-a-theta-per1} and \ref{Lambdalem} that, if $\psi \in
H^2_{\rm per}(0,T)$ is close to the orbit of $u_0$ in the sense of the
bound (\ref{infinimum}), then
\begin{equation}
\label{Lamequiv}
C_1 d^2 \le \Lambda_{c=2}(\psi) - \Lambda_{c=2}(u_0) \le C_2 C_0^2 d^2.
\end{equation}
With this estimate at hand, it is now easy to prove that the
decomposition (\ref{decomposition}) with the orthogonality conditions
(\ref{projections}) hold for all $t \in \R$ if $\psi(\cdot,t)$ is the
solution of the cubic NLS equation (\ref{nls}) with initial data
$\psi_0 \in H^2_{\rm per}(0,T)$ satisfying the initial bound
(\ref{bound-initial-per}), where $\delta > 0$ is small enough so that
\begin{equation}
\label{deltasmall}
  C_0 (C_2/C_1)^{1/2} \delta < \epsilon_0.
\end{equation}
Indeed, let $d(t)$ be the distance in $H^2_{\rm per}(0,T)$ from
$\psi(\cdot,t)$ to the orbit of $u_0$, in the sense of
(\ref{infinimum}). Initially we have $d(0) \le \delta < \epsilon_0$ by
(\ref{bound-initial-per}) and (\ref{deltasmall}). Let $\mathcal{J}
\subset \R$ be the largest time interval containing the origin such
that $d(t) \le \epsilon_0$ for all $t \in \mathcal{J}$. As $d(t)$ is a
continuous function of time, it is clear that $\mathcal{J}$ is
closed. On the other hand, for any $t \in \mathcal{J}$, we have by
(\ref{Lamequiv})
$$
  C_1 d(t)^2 \le \Lambda_{c=2}(\psi(\cdot,t)) - \Lambda_{c=2}(u_0)
  = \Lambda_{c=2}(\psi_0) - \Lambda_{c=2}(u_0) \le C_2 C_0^2 \delta^2,
$$
where we have used the crucial fact that $\Lambda_c$ is conserved
under the evolution defined by the cubic NLS equation (\ref{nls}) in
$H^2_{\rm per}(0,T)$.  Thus $d(t) \le C_0 (C_2/C_1)^{1/2} \delta <
\epsilon_0$, hence by continuity the interval $\mathcal{J}$ contains
a neighborhood of $t$. So $\mathcal{J}$ is open, hence finally
$\mathcal{J} = \R$.  This shows that the decomposition
(\ref{decomposition}) holds for all $t \in \R$ with real-valued
functions $u(\cdot,t), v(\cdot,t) \in H^2_{\rm per}(0,T)$ satisfying
the orthogonality conditions (\ref{projections}) as well as the
uniform bound
$$
  \|u(\cdot,t) + i v (\cdot,t)\|_{H^2_{\rm per}}
  \le C_0 d(t) \le C_0^2 (C_2/C_1)^{1/2} \delta, \quad t \in \R.
$$
This yields the bound (\ref{bound-final-per}) with $\epsilon = C_0^2
(C_2/C_1)^{1/2} \delta$.  To conclude the proof of
Theorem~\ref{theorem-per}, it remains to show that the modulation
parameters $\xi$ and $\theta$ are continuously differentiable
functions of time $t$ and satisfy the bound (\ref{bound-time-per}).

\begin{lemma}
\label{lemma-a-theta-per2}
Assume that the solution $\psi(\cdot,t)$ of the cubic NLS equation
(\ref{nls}) satisfies $d(t) \le \epsilon \le \epsilon_1$ for all $t
\in \R$, where $d(t)$ denotes as in (\ref{infinimum}) the distance to
the orbit of $u_0$. Then the modulation parameters $\xi(t),\theta(t)$
given by Lemma~\ref{lemma-a-theta-per1} are 
continuously differentiable functions of $t$ satisfying
(\ref{bound-time-per}).
\end{lemma}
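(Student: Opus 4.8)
The plan is to regard the modulation parameters as implicitly defined by the orthogonality conditions (\ref{projections}) and to apply the implicit function theorem in the time variable, reusing the uniform invertibility of the Jacobian already obtained in Lemma~\ref{lemma-a-theta-per1}. After moving the translation onto the smooth profile, the conditions (\ref{projections}) are equivalent to ${\bf f}(\xi,\theta,t) = {\bf 0}$, where
$$
{\bf f}(\xi,\theta,t) = \begin{pmatrix} \langle u_0'(\cdot-\xi),\,{\rm Re}(e^{i(t+\theta)}\psi(\cdot,t)) \rangle_{L^2_{\rm per}} \\[1mm] \langle u_0(\cdot-\xi),\,{\rm Im}(e^{i(t+\theta)}\psi(\cdot,t)) \rangle_{L^2_{\rm per}} \end{pmatrix},
$$
and the pair $(\xi(t),\theta(t))$ furnished by Lemma~\ref{lemma-a-theta-per1} satisfies this identity for every $t \in \R$. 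Here I have used that $\langle u_0',u_0\rangle_{L^2_{\rm per}} = 0$, so the term $u_0$ in the decomposition (\ref{decomposition}) does not contribute.

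First I would verify that ${\bf f}$ is jointly $C^1$. The dependence on $(\xi,\theta)$ is smooth, since these enter only through the smooth factors $u_0'(\cdot-\xi)$, $u_0(\cdot-\xi)$ and $e^{i(t+\theta)}$. For the time dependence, I recall that the global solution satisfies $\psi \in C(\R,H^2_{\rm per}(0,T))$; since $H^2_{\rm per}(0,T) \hookrightarrow L^\infty$, the cubic NLS equation (\ref{nls}) gives $\psi_t = i\psi_{xx} - i|\psi|^2\psi \in C(\R,L^2_{\rm per}(0,T))$, so that $\psi \in C^1(\R,L^2_{\rm per}(0,T))$. As ${\bf f}$ pairs $\psi(\cdot,t)$ against fixed smooth functions, this shows that $\partial_t {\bf f}$ exists and is continuous. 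The implicit function theorem, together with the invertibility of $D_{(\xi,\theta)}{\bf f}$ with a uniformly bounded inverse established in Lemma~\ref{lemma-a-theta-per1} (applicable because $d(t) \le \epsilon_0$), then yields that $t \mapsto (\xi(t),\theta(t))$ is continuously differentiable, with
$$
\begin{pmatrix} \dot\xi(t) \\ \dot\theta(t) \end{pmatrix} = -\bigl(D_{(\xi,\theta)}{\bf f}\bigr)^{-1}\,\partial_t {\bf f}.
$$

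It then remains to bound $\partial_t {\bf f}$ by $C\epsilon$. Writing $\Phi = e^{i(t+\theta)}\psi$ and using the equation in the form $\partial_t \Phi = i(\Phi + \Phi_{xx} - |\Phi|^2\Phi)$, I would integrate by parts to move the two spatial derivatives off $\Phi$ and onto the smooth profile, obtaining
$$
\partial_t f_1 = \langle u_0'(\cdot-\xi),\,{\rm Re}(i\Phi)\rangle_{L^2_{\rm per}} + \langle u_0'''(\cdot-\xi),\,{\rm Re}(i\Phi)\rangle_{L^2_{\rm per}} - \langle u_0'(\cdot-\xi),\,{\rm Re}(i|\Phi|^2\Phi)\rangle_{L^2_{\rm per}},
$$
and similarly for $f_2$. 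The key observation is that each term vanishes identically when $\Phi = u_0(\cdot-\xi)$, because $u_0$ is real, solves $u_0'' + u_0 - u_0^3 = 0$, and ${\rm Re}(i\,\cdot\,{\rm real}) = 0$. Writing $\Phi = u_0(\cdot-\xi) + w(\cdot-\xi)$ with $w = u + iv$, every surviving contribution is therefore at least linear in $w$; controlling the cubic term via $H^2_{\rm per}(0,T) \hookrightarrow L^\infty$, I obtain $\|\partial_t {\bf f}\| \le C\|w\|_{H^2_{\rm per}}$. Since $\|w\|_{H^2_{\rm per}} \le C_0\,d(t) \le C_0\,\epsilon$ by Lemma~\ref{lemma-a-theta-per1} and the hypothesis, this gives $\|\partial_t {\bf f}\| \le C\epsilon$, and combining with the uniform bound on $\bigl(D_{(\xi,\theta)}{\bf f}\bigr)^{-1}$ yields (\ref{bound-time-per}).

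The main obstacle I anticipate is the time-regularity bookkeeping: the solution is only $C^1$ in time with values in $L^2_{\rm per}(0,T)$, not in $H^2_{\rm per}(0,T)$, so one cannot differentiate the two-derivative expression directly and must substitute the PDE for $\psi_t$ and transfer all spatial derivatives onto the smooth test functions $u_0$ and $u_0'$ before estimating. Once this integration-by-parts reduction is carried out, the cancellation at $w = 0$ forces the quantitative bound, and the continuity of $\dot\xi,\dot\theta$ follows from the continuity in $t$ of both $\partial_t {\bf f}$ and $\bigl(D_{(\xi,\theta)}{\bf f}\bigr)^{-1}$.
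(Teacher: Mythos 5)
Your proposal is correct, but it handles the delicate point --- time regularity --- differently from the paper. The paper differentiates the decomposition (\ref{decomposition}) in time, which requires $\psi_t$ to make classical sense; to justify this it first assumes $\psi_0 \in H^4_{\rm per}(0,T)$, derives the modulation system (\ref{Bsys}) by projecting the linearized evolution onto $u_0'$ and $u_0$, observes that after inverting the matrix $B$ the right-hand side of (\ref{Bsys}) is continuous in $t$ under the mere assumption $\psi \in C(\R,H^2_{\rm per})$, and then recovers the general case by a density argument. You instead apply the implicit function theorem in $t$ to the orthogonality functional ${\bf f}$, using the standard fact that $H^2$ solutions satisfy $\psi \in C^1(\R,L^2_{\rm per})$ (via Duhamel) and integrating by parts to transfer the two spatial derivatives onto the smooth profiles; this bypasses the regularization/density step entirely, which is a genuine streamlining. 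The two computations are in substance identical: your linear-in-$w$ contribution to $\partial_t f_1$, namely $-2\langle u_0^2 u_0', v\rangle_{L^2_{\rm per}}$, is exactly the paper's term $\langle u_0', L_- v\rangle_{L^2_{\rm per}} = \langle L_- u_0', v\rangle_{L^2_{\rm per}}$ since $L_- u_0' = -2u_0^2 u_0'$, and your Jacobian $D_{(\xi,\theta)}{\bf f}$ is (up to signs) the paper's matrix $B$ in (\ref{BBdef}), so both routes deliver the same $\mathcal{O}(\epsilon)$ bound. Two small points deserve a sentence in a polished write-up: (i) to identify the $C^1$ branch produced by the implicit function theorem with the parameters $(\xi(t),\theta(t))$ of Lemma~\ref{lemma-a-theta-per1}, you should first record that these are continuous in $t$ (as the paper does, via the quantitative uniqueness in the proof of Lemma~\ref{lemma-a-theta-per1}), so that they stay inside the local uniqueness neighborhood; and (ii) the uniform invertibility of $D_{(\xi,\theta)}{\bf f}$ must be checked at the solution point $(\xi(t),\theta(t))$ rather than at the minimizer $(\xi_0,\theta_0)$, which follows from the same estimate because $\|u + iv\|_{H^2_{\rm per}} \le C_0\,d(t) \le C_0\epsilon$. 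Neither point is a gap --- both are immediate from material already in Lemma~\ref{lemma-a-theta-per1}.
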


\begin{proof}
As $\psi \in C(\R,H^2_{\rm per}(0,T))$, the proof of
Lemma~\ref{lemma-a-theta-per1} shows that $\xi(t)$ and $\theta(t)$
depend continuously on $t$. To
prove differentiability, we first consider more regular solutions
with initial data $\psi_0 \in H^4_{\rm per}(0,T)$, and then recover
the general case by a density argument. For regular solutions,
we can differentiate both sides of the decomposition (\ref{decomposition})
and use the cubic NLS equation (\ref{nls}) to obtain the evolution system
$$
\left\{ \begin{array}{l}
~\,\,u_t = L_- v + \dot{\xi} (u_0' + u_x) - \dot{\theta} v +
(2 u_0 u + u^2 + v^2) v, \\
-v_t = L_+ u -\dot{\xi} v_x - \dot{\theta} (u_0 + u) +
(3 u_0 u + u^2 + v^2) u + u_0 v^2, \end{array} \right.
$$
where the operators $L_{\pm}$ are defined in (\ref{operatorsdef}).
Using the orthogonality conditions (\ref{projections}), we
eliminate the time derivatives $u_t, v_t$ by taking the scalar
product of the first line with $u_0'$ and of the second line with
$u_0$. This gives the following linear system for the derivatives
$\dot{\xi}$ and $\dot{\theta}$:
\begin{equation}
\label{Bsys}
B \left[ \begin{array}{c} \dot{\xi} \\ \dot{\theta} \end{array} \right]
= \left[ \begin{array}{c} \langle u_0', L_- v \rangle_{L^2_{\rm per}} \\
\langle u_0, L_+ u \rangle_{L^2_{\rm per}} \end{array} \right]
+ \left[ \begin{array}{c}
\langle u_0', (2 u_0 u + u^2 + v^2) v \rangle_{L^2_{\rm per}} \\
\langle u_0, (3 u_0 u + u^2 + v^2) u + u_0 v^2 \rangle_{L^2_{\rm per}}
\end{array} \right],
\end{equation}
where
\begin{equation}
\label{BBdef}
B = \left[ \begin{array}{cc} -\| u_0' \|^2_{L^2_{\rm per}} & 0 \\ 0 &
    \| u_0 \|^2_{L^2_{\rm per}} \end{array} \right] +
\left[ \begin{array}{cc}
    -\langle u_0', u_x \rangle_{L^2_{\rm per}} & \langle u_0', v
\rangle_{L^2_{\rm per}} \\
    \langle u_0, v_x \rangle_{L^2_{\rm per}} & \langle u_0, u
    \rangle_{L^2_{\rm per}} \end{array} \right].
\end{equation}
Since $\|u(\cdot,t) + i v(\cdot,t)\|_{H^2_{\rm per}} \le C_0 d(t) \leq
C_0 \epsilon$ for all $t \in \R$, the second term in the right-hand
side of (\ref{BBdef}) is of size $\mathcal{O}(\epsilon)$, hence the
matrix $B$ is invertible if $\epsilon$ is small enough. Inverting $B$
in (\ref{Bsys}), we obtain a formula for the derivatives
$\dot{\xi},\dot{\theta}$ where the right-hand side is a continuous
function of time under the mere assumption that $\psi \in
C(\R,H^2_{\rm per}(0,T))$. By a classical density argument, we
conclude that $\xi,\theta$ are differentiable in the general case, and
that their derivatives are given by (\ref{Bsys}).  Finally, the first
term in the right-hand side of (\ref{Bsys}) is of size
$\mathcal{O}(\epsilon)$, whereas the second term is
$\mathcal{O}(\epsilon^2)$, hence $|\dot{\xi}(t)| + |\dot{\theta}(t)|
\le C\epsilon$ for all $t \in \R$, where the positive constant $C$ is
independent of $t$.
\end{proof}

\appendix

\section{Explicit expressions involving Jacobi elliptic functions}

In this appendix, we derive explicit formulas the generalized
eigenvectors of the linearized operators in 
(\ref{lin-operators-flow}) by using Jacobi elliptic functions. In particular, we show how to
compute the explicit expression (\ref{expression-mu}).

Fix $\mathcal{E} \in (0,1)$ and let $k \in (0,1)$ be given by
(\ref{exact-c-plus-minus}). The periodic wave profile $u_0$ defined in (\ref{sn-periodic})
can be rewritten in the explicit form
$$
u_0(x) = \sqrt{\frac{2k^2}{1+k^2}}~{\rm sn}\Bigl(\frac{x}{\sqrt{1+k^2}}\,,\,k\Bigr) = \sqrt{\frac{2k^2}{1+k^2}}~\jmath
  \Bigl(\frac{x}{\sqrt{1+k^2}}\Bigr), \quad x \in \mathbb{R},
$$
where $\jmath(\xi) = {\rm sn}(\xi,k)$ denotes the Jacobi elliptic
function. To simplify the calculations below, it is convenient to use the
space variable $\xi = x/\sqrt{1+k^2}$ instead of $x$.

Let us recall a few properties of the Jacobi elliptic functions
${\rm sn}(\xi,k)$, ${\rm cn}(\xi,k)$, and ${\rm dn}(\xi,k)$ \cite{Lawden}.
The functions ${\rm sn}(\xi,k)$ and ${\rm cn}(\xi,k)$ are periodic with period $T = 4K(k)$,
where $K(k)$ denotes the complete elliptic integral of the first kind.
On the other hand, the function ${\rm dn}(\xi,k) = \sqrt{1 - k^2{\rm sn}
(\xi,k)^2}$ is periodic with period $2K(k)$.

We have the following expressions for the first-order derivatives of the Jacobin elliptic functions:
\begin{equation}\label{Jacobiprime}
\frac{d}{d \xi} \left[\begin{array}{c} {\rm sn}(\xi,k)\\{\rm cn}(\xi,k)
\\{\rm dn}(\xi,k) \end{array} \right] = \left[ \begin{array}{c}
{\rm cn}(\xi,k) {\rm dn}(\xi,k), \\ -{\rm sn}(\xi,k) {\rm dn}(\xi,k),
\\ -k^2 {\rm sn}(\xi,k) {\rm cn}(\xi,k).\end{array} \right]
\end{equation}
In particular, the function $\jmath(\xi) = {\rm sn}(\xi,k)$ satisfies
the differential equation
\begin{equation}\label{jode}
\frac{d^2 \jmath}{d \xi^2} = - (1+k^2) \jmath + 2 k^2 \jmath^3.
\end{equation}
Let us also introduce the incomplete elliptic integral of the second kind
\begin{equation}\label{Ellipticdef}
E(\xi,k) = \int_0^{\xi} {\rm dn}^2(y,k) dy, \quad \xi \in \mathbb{R}.
\end{equation}
This function is not periodic and we have the relation
$$
E(\xi + 2K(k),k) = E(\xi,k) + 2 E(k) \quad \mbox{\rm for all} \quad \xi \in \mathbb{R},
$$
where $E(k) := \frac12 E(2K(k),k)$ is the complete
elliptic integral of the second kind. This means that the function
$\xi \mapsto E(\xi,k)$ is linearly growing at infinity with
asymptotic rate $E(k)/K(k)$.

Using the chain rule for the operator $L_- = -\partial_x^2 + u_0^2(x) - 1$,
we obtain $\mathcal{L}_- = (1+k^2) L_-$, where
$$
\mathcal{L}_- = -\partial_{\xi}^2 - (1 + k^2) + 2 k^2 \jmath(\xi)^2.
$$
Recall that $\mathcal{L}_- \jmath = 0$.
Using the relations (\ref{Jacobiprime})--(\ref{Ellipticdef}), it is
easy to verify that
\begin{align*}
& \mathcal{L}_- \Bigl({\rm cn}(\xi,k) {\rm dn}(\xi,k)\Bigr) \,=\, -4k^2\,
{\rm cn}(\xi,k) \, {\rm dn}(\xi,k) \, {\rm sn}^2(\xi,k), \\
& \mathcal{L}_- \Bigl({\rm sn}(\xi,k) E(\xi,k)\Bigr) \,=\, - 2 \,{\rm cn}(\xi,k) \,
{\rm dn}(\xi,k) \left(1 - 2 k^2 {\rm sn}^2(\xi,k) \right), \\
& \mathcal{L}_- \Bigl(\xi {\rm sn}(\xi,k)\Bigr) \,=\, -2 \,{\rm cn}(\xi,k) \,
{\rm dn}(\xi,k).
\end{align*}
Therefore, the function
\begin{equation}
\label{function-V}
V(\xi) := {\rm cn}(\xi,k)\,{\rm dn}(\xi,k) + {\rm sn}(\xi,k)
\left[ E(\xi,k) - \frac{E(k)}{K(k)} \xi \right],
\end{equation}
is periodic with period $T = 4K(k)$ and satisfies the inhomogeneous equation
\begin{equation}\label{L0ident}
\mathcal{L}_- V = -2 \left( 1 - \frac{E(k)}{K(k)} \right) {\rm cn}(\xi,k)
\,{\rm dn}(\xi,k) = -2 \left( 1 -  \frac{E(k)}{K(k)} \right) \jmath'.
\end{equation}
Note that the numerical coefficient in (\ref{L0ident}) is nonzero because
$K(k) > E(k)$ for all $k \in (0,1)$.

Using the chain rule for the operator $M_- \,=\, \partial_x^4 - 3 \partial_x u_0^2 \partial_x + u_0^2 - 1$,
we obtain $\mathcal{M}_- = (1+k^2)^2 M_-$, where
$$
\mathcal{M}_- = \partial_{\xi}^4 - 6 k^2 \partial_{\xi} \jmath(\xi)^2
  \partial_{\xi} + 2 k^2 (1 +k^2) \jmath(\xi)^2 - (1+k^2)^2.
$$
A long but direct calculation using (\ref{Jacobiprime})
shows that the same function $V$ in (\ref{function-V}) also satisfies
\begin{equation}\label{M0ident}
\mathcal{M}_- V = 4 \left[ k^2 - \left( 1 -  \frac{E(k)}{K(k)} \right)
(1 + k^2) \right] \jmath'.
\end{equation}
Recall that $K_-(c) = M_- - c L_-$. Combining (\ref{L0ident}) and (\ref{M0ident})
and using the chain rule, we obtain
\begin{equation}\label{M0L0ident}
\left( \mathcal{M}_- - c(1+k^2) \mathcal{L}_- \right) V = \left[ 4 k^2 + 2 (c-2)
  (1+k^2) \left( 1 -  \frac{E(k)}{K(k)} \right) \right] \jmath'.
\end{equation}
Note that $\jmath$ and $V$ are orthogonal with respect to the scalar
product $\langle \cdot, \cdot \rangle_{L^2_{\rm per}}$
in $L^2_{\rm per}(-2K(k),2K(k))$ because $V$ is even and $\jmath$ is odd.

\begin{remark}
\label{remark-appendix}
The fact that both quantities $\mathcal{L}_- V$ and $\mathcal{M}_- V$ are proportional
to the same function $\jmath'$ is not an accident. Associated with the neutral mode
$(u_0',0)$, we have $L_- v = u_0'$ arising in the solutions of
the linearized evolution operator at $u_0$:
\begin{equation*}
\left[ \begin{array}{cc} 0 & L_- \\ -L_+ & 0 \end{array} \right]
\left[ \begin{array}{cc} u_0' \\ 0 \end{array} \right]\,=\,
\left[ \begin{array}{cc} 0 \\ 0 \end{array} \right] \quad \hbox{and}\quad
\left[ \begin{array}{cc} 0 & L_- \\ -L_+ & 0\end{array} \right]
\left[ \begin{array}{cc} 0 \\ v \end{array} \right]\,=\,
\left[ \begin{array}{cc} u_0' \\ 0 \end{array} \right],
\end{equation*}
hence $(0,v)$ is the generalized neutral mode. Now the higher-order operators $M_\pm$
are associated with the linearization of another flow in the hierarchy
of the integrable NLS equation, which commutes with the original flow
of (\ref{nls}), see Section \ref{sec:operator}. As
is easily verified, this implies that the same function $v$ satisfies
$M_- v = A u_0'$ for some constant $A \in \mathbb{R}$, in agreement
with (\ref{L0ident}) and (\ref{M0ident}) after the scaling transformation
from $x$ to $\xi$.
\end{remark}

We can now obtain the explicit expression (\ref{expression-mu})
from the formula (\ref{expansion-lowest-band}). Recall that
$z = \ell x$ and $U = u_0(\ell^{-1} \cdot)$. If $W = w(\ell^{-1} \cdot)$
satisfies $P_-(c,0) W = U'$, then
$$
\Bigl(P_-(c,0)W\Bigr)(z) =   (1+k^2)^{-2} \left(\mathcal{M}_- -
c(1+k^2) \mathcal{L}_-  \right) w(\xi), \quad \xi = \frac{z}{\ell\sqrt{1+k^2}}.
$$
Using the chain rule, we rewrite the formula (\ref{expansion-lowest-band}) in the equivalent form
\begin{align}
\nonumber
\mu''(0) \,=\, \frac{2\ell^2}{\|\jmath\|_{L^2_{\rm per}}^2}
\Bigl[&-4 (c-2)^2(1+k^2) \langle \jmath',(\mathcal{M}_- -
c(1+k^2) \mathcal{L}_-)^{-1} \jmath' \rangle_{L^2_{\rm per}} \\ \label{expansion-lowest-band-appendix}
&+ 3 (1+k^2)^{-1} \|\jmath'\|_{L^2_{\rm per}}^2 + (3-c)
\|\jmath\|^2_{L^2_{\rm per}}\Bigr].
\end{align}
It follows from (\ref{M0L0ident}) that
\begin{equation}
\label{expansion-app-technical}
(\mathcal{M}_- - c(1+k^2) \mathcal{L}_-)^{-1} \jmath' = \frac{V}{4 k^2 + 2 (c-2)
  (1+k^2) \left( 1 -  \frac{E(k)}{K(k)} \right)}.
\end{equation}
It remains to compute the norms and the scalar products
in the right-hand side of equation (\ref{expansion-lowest-band-appendix}).
Using the notations above, we find for all $k \in (0,1)$,
\begin{align}
\label{norm1-app}
 \|\jmath\|^2_{L^2_{\rm per}} \,&=\, \frac{4 K(k)}{k^2} \left[ 1 - \frac{E(k)}{K(k)}
 \right] > 0, \\
 \label{norm2-app}
\|\jmath'\|^2_{L^2_{\rm per}} \,&=\, \frac{4 K(k)}{3 k^2} \left[ k^2-1
+ (k^2+1) \frac{E(k)}{K(k)} \right] > 0,
\end{align}
and
\begin{equation}
\label{norm3-app}
\langle \jmath', V \rangle_{L^2_{\rm per}} =  \frac{2 K(k)}{k^2} \left[
k^2-1 + 2 \frac{E(k)}{K(k)} - \frac{E(k)^2}{K(k)^2} \right].
\end{equation}
Substituting these expressions into (\ref{expansion-lowest-band-appendix})
and finding a common denominator for all terms, we obtain the expression
(\ref{expression-mu}).

Next, using the chain rule for the operator $L_+ = -\partial_x^2 + 3 u_0^2(x) - 1$,
we obtain $\mathcal{L}_+ = (1+k^2) L_+$, where
$$
\mathcal{L}_+ = -\partial_{\xi}^2 - (1 + k^2) + 6 k^2 \jmath(\xi)^2.
$$
Recall that $\mathcal{L}_+ \jmath' = 0$.
Using the relations (\ref{Jacobiprime})--(\ref{Ellipticdef}), it is
easy to verify that
\begin{align*}
& \mathcal{L}_+ \Bigl({\rm sn}(\xi,k)\Bigr) \,=\, 4k^2\, {\rm sn}^3(\xi,k), \\
& \mathcal{L}_+ \Bigl({\rm cn}(\xi,k) {\rm dn}(\xi,k) \int_0^{\xi} \frac{{\rm sn}^2(y,k)}{{\rm dn}^2(y,k)} dy \Bigr) \,=\, -2 \,{\rm sn}(\xi,k) \,
\left(1 - 2 {\rm sn}^2(\xi,k) \right), \\
& \mathcal{L}_+ \Bigl(\xi {\rm cn}(\xi,k) {\rm dn}(\xi,k\Bigr) \,=\, 2 \,{\rm sn}(\xi,k) \,
\left(1 + k^2 - 2 k^2 {\rm sn}^2(\xi,k) \right).
\end{align*}
Therefore, the function
\begin{equation}
\label{function-U}
U(\xi) := (1-k^2) (1 + b k^2) {\rm sn}(\xi,k) - k^2 (1-k^2) {\rm cn}(\xi,k){\rm dn}(\xi,k)
\left[ \int_0^{\xi} \frac{{\rm sn}^2(y,k)}{{\rm dn}^2(y,k)} dy - b \xi \right],
\end{equation}
satisfies the inhomogeneous equation
\begin{equation}\label{LPlus}
\mathcal{L}_+ U = 2 k^2 (1-k^2) (1+b(1+k^2)) {\rm sn}(\xi,k) = 2 k^2 (1-k^2) (1+b(1+k^2))  \jmath,
\end{equation}
for an arbitrary coefficient $b \in \mathbb{R}$.

We shall find the value of $b$ from the condition that $U$ is periodic with period $T = 4K(k)$.
To do so, we recall the identity (see 16.26.6 in \cite{AS}):
$$
(1-k^2) \int_0^{\xi} \frac{dy}{{\rm dn}^2(y,k)} = E(\xi,k) - k^2 \frac{{\rm sn}(\xi,k){\rm cn}(\xi,k)}{{\rm dn}(\xi,k)}.
$$
Using this identity, we rewrite the function $U$ given by (\ref{function-U}) in the equivalent form
\begin{eqnarray}
\nonumber
U(\xi) & = & (1-k^2) (1 + b k^2) {\rm sn}(\xi,k) + k^2 {\rm sn}(\xi,k) {\rm cn}^2(\xi,k) \\
\label{function-U-equiv} & \phantom{t} & \phantom{text} -
{\rm cn}(\xi,k){\rm dn}(\xi,k) \left[ E(\xi,k) - (1-k^2) (1 + b k^2) \xi \right],
\end{eqnarray}
which is periodic if and only if $(1-k^2) (1 + b k^2) = \frac{E(k)}{ K(k)}$. Substituting this expression
into (\ref{function-U-equiv}), we finally obtain the $4K(k)$-periodic solution
\begin{eqnarray}
\label{function-U-final}
U(\xi) = \frac{E(k)}{K(k)} {\rm sn}(\xi,k) + k^2 {\rm sn}(\xi,k) {\rm cn}^2(\xi,k) -
{\rm cn}(\xi,k){\rm dn}(\xi,k) \left[ E(\xi,k) - \frac{E(k)}{K(k)} \xi \right]
\end{eqnarray}
of the inhomogeneous equation
\begin{equation}\label{LPlusUU}
\mathcal{L}_+ U = 2  \left( k^2 - 1 + (1+k^2) \frac{E(k)}{K(k)} \right)  \jmath.
\end{equation}
Note that the numerical coefficient in (\ref{LPlusUU})
is nonzero for every $k \in (0,1)$, thanks to (\ref{norm2-app}).

Using the chain rule for the operator $M_+ \,=\, \partial_x^4 - 5 \partial_x u_0^2 \partial_x - 5 u_0^4 + 15 u_0^2 - 4 + 3 \mathcal{E}^2$,
we obtain $\mathcal{M}_+ = (1+k^2)^2 M_+$, where
$$
\mathcal{M}_+ = \partial_{\xi}^4 - 10 k^2 \partial_{\xi} \jmath(\xi)^2
  \partial_{\xi} - 20 k^4  \jmath(\xi)^4 + 30 k^2 (1 +k^2) \jmath(\xi)^2 - (1+14 k^2+k^4).
$$
After a long but direct calculation, we obtain that the same function $U$ in (\ref{function-U-final}) also satisfies
\begin{equation}\label{MPlus}
\mathcal{M}_+ U = 4 \left[ 2k^4 - k^2 - 1 + (1 + 4 k^2 + k^4) \frac{E(k)}{K(k)} \right] \jmath.
\end{equation}
Combining (\ref{LPlusUU}) and (\ref{MPlus}) into $K_+(c) = M_+ - cL_+$ for $c = 2$
and using the chain rule, we obtain
\begin{equation}\label{MLPlut}
\left( \mathcal{M}_+ - 2(1+k^2) \mathcal{L}_+ \right) U = 4 k^2 \left[ k^2 - 1 + \frac{2 E(k)}{K(k)} \right] \jmath.
\end{equation}
Since $2 > 1 + k^2$, the numerical coefficient in front of $\jmath$ is positive for all $k \in (0,1)$,
thanks to (\ref{norm2-app}).

\begin{remark}
Again, we observe that both quantities $\mathcal{L}_+ U$ and $\mathcal{M}_+ U$ are proportional
to the same function $\jmath$. This is due to the generalized neutral mode $(u,0)$ associated with the neutral mode
$(0,u_0)$, which arise in the solution of $L_+ u = u_0$. See also Remark \ref{remark-appendix}.
\end{remark}

\vspace{0.5cm}

\noindent{\bf Acknowledgements.} The authors thank B. Deconinck for
pointing out to his work \cite{Decon} and for helping to compare our
analytic formula (\ref{exact-c-plus-minus}) with the results of
\cite{Decon}. The authors also thank M. Haragus for pointing to the
interwining relation (\ref{intertwining}), which helped us to extend the result
to periodic waves of large amplitudes and to prove the spectral stability
of periodic waves. D.P. is supported by the Chaire d'excellence ENSL/UJF.
He thanks members of Institut Fourier, Universit\'e Grenoble for
hospitality and support during his visit (January-June, 2014).

\end{document}